\documentclass[a4paper,12pt]{article}
\setlength{\textwidth}{16cm}
\setlength{\textheight}{23cm}
\setlength{\oddsidemargin}{0mm}
\setlength{\topmargin}{-1cm}

\usepackage{latexsym}
\usepackage{amsmath}
\usepackage{amssymb}
\usepackage{enumerate}
\usepackage{bm}
\usepackage{mathrsfs}

\usepackage{color}

\if0%%%
\usepackage[anchorcolor=blue,%
 bookmarks=true,%
 bookmarksnumbered=true,%
 colorlinks=true,%
 citecolor=cyan,%
 linkcolor=blue,%
 dvipdfmx%
]{hyperref}

%\usepackage[color]{showkeys}
%\definecolor{refkey}{gray}{0.5}
%\definecolor{labelkey}{gray}{0.2}
\fi%%%

\usepackage{theorem}
\newtheorem{theorem}{Theorem}[section]
\newtheorem{proposition}[theorem]{Proposition}
\newtheorem{lemma}[theorem]{Lemma}
\newtheorem{corollary}[theorem]{Corollary}

\theorembodyfont{\rmfamily}
\newtheorem{remark}[theorem]{Remark}
\newtheorem{example}[theorem]{Example}
\newtheorem{definition}[theorem]{Definition}
\newtheorem{proof}{\textmd{\textit{Proof.}}}

\makeatletter

\@addtoreset{equation}{section}
\makeatother

\newcommand{\qedd}{\hfill \square}
\newcommand{\ve}{\varepsilon}
\newcommand{\ez}{\epsilon}
\newcommand{\del}{\partial}
\newcommand{\lra}{\longrightarrow}
\newcommand{\e}{\mathrm{e}}
\newcommand{\td}{\mathrm{d}}

\newcommand{\N}{\ensuremath{\mathbb{N}}}

\newcommand{\R}{\ensuremath{\mathbb{R}}}

\newcommand{\fm}{\ensuremath{\mathfrak{m}}}
\newcommand{\bs}{\ensuremath{\mathbf{s}}}
\newcommand{\bK}{\ensuremath{\mathbf{K}}}

\newcommand{\sB}{\ensuremath{\mathsf{B}}}

\newcommand{\sI}{\ensuremath{\mathsf{I}}}
\newcommand{\sJ}{\ensuremath{\mathsf{J}}}
\newcommand{\sR}{\ensuremath{\mathsf{R}}}
\newcommand{\sT}{\ensuremath{\mathsf{T}}}

\newcommand{\LL}{\ensuremath{\mathscr{L}}}

\def\vol{\mathop{\mathrm{vol}}\nolimits}
\def\diam{\mathop{\mathrm{diam}}\nolimits}

\def\Cut{\mathop{\mathrm{Cut}}\nolimits}

\def\Ric{\mathop{\mathrm{Ric}}\nolimits}
\def\trace{\mathop{\mathrm{trace}}\nolimits}

\def\lf{\left}
\def\r{\right}

\newcommand{\Grad}{\bm{\nabla}}
\newcommand{\Lap}{\bm{\Delta}}

\newcommand{\wz}[1]{\widetilde{#1}}

\title{Comparison theorems on weighted Finsler manifolds and spacetimes with $\ez$-range}
\author{Yufeng LU\thanks{
Department of Mathematics, Osaka University, Osaka 560-0043, Japan
({\sf yufenglu.math@gmail.com}, {\sf s.ohta@math.sci.osaka-u.ac.jp})}
\and
Ettore MINGUZZI\thanks{
Dipartimento di Matematica e Informatica ``U. Dini'', Universit\`a degli Studi di Firenze,
Via S.~Marta 3, I-50139 Firenze, Italy ({\sf ettore.minguzzi@unifi.it})}
\and
Shin-ichi OHTA\footnotemark[1] \textsuperscript{,}\thanks{
RIKEN Center for Advanced Intelligence Project (AIP),
1-4-1 Nihonbashi, Tokyo 103-0027, Japan}}
%\date{}
\date{\today}
\pagestyle{plain}

%%%%%%    TEXT START    %%%%%%
\begin{document}

\maketitle

\begin{abstract}
We establish the Bonnet--Myers theorem, Laplacian comparison theorem,
and Bishop--Gromov volume comparison theorem
for weighted Finsler manifolds as well as weighted Finsler spacetimes,
of weighted Ricci curvature bounded below by using the weight function.
These comparison theorems are formulated with $\ez$-range introduced in our previous paper,
that provides a natural viewpoint of interpolating
weighted Ricci curvature conditions of different effective dimensions.
Some of our results are new even for weighted Riemannian manifolds
and generalize comparison theorems of Wylie--Yeroshkin and Kuwae--Li.
\end{abstract}

%\tableofcontents

\section{Introduction}%%%%%%%%%%%%%
%%%%%%%%%%%%%%%%%%%%%%%%%

A weighted manifold is a pair given by a manifold,
equipped with some metric, and a weight function on it.
A fundamental example is a Riemannian manifold $(M,g)$
and a measure $\fm=\e^{-\psi} \,\vol_g$ on it,
where $\vol_g$ is the Riemannian volume measure
induced from the Riemannian metric $g$ and $\psi$ is a weight function on $M$.
This kind of \emph{weighted manifolds}, also called  \emph{manifolds with density},
naturally arise in the convergence theory of spaces
(when a sequence collapses to a lower dimensional space),
in the study of Ricci solitons (a weighted analogue of Einstein manifolds),
and in the {\em needle decomposition} (also called the {\em localization};
needles are weighted even when the original space is not).
We shall be interested in comparison geometry for these structures.

As for the nature of the metric on the manifold,
the Riemannian case was the first to be studied \cite{BE,Li},
and then generalizations to Finsler manifolds \cite{Oint},
Lorentzian manifolds \cite{Ca}, and Lorentz--Finsler manifolds \cite{LMO}, followed.

In comparison geometry and geometric analysis of these weighted manifolds,
the \emph{weighted Ricci curvature},
also called the \emph{Bakry--\'Emery--Ricci curvature} and attributed to \cite{BE},
plays a central role.
The weighted Ricci curvature $\Ric_N$ includes a real parameter $N$
sometimes called the \emph{effective dimension}.
For $N \in [\dim M,+\infty]$, $N$ indeed acts as an upper bound of the dimension
in the sense that, if $\Ric_N$ is bounded below by a real number $K$ (in a suitable sense),
then the weighted space enjoys various properties as it has
the Ricci curvature $\ge K$ and the dimension $\le N$.
In particular, $\Ric_{\infty}$ is useful for investigations of dimension-free estimates.
Gaussian spaces $(\R^n,\|\cdot\|,\e^{-\frac{K}{2}\|x\|^2} \,\td x)$, $K>0$,
are typical examples of spaces satisfying $\Ric_{\infty} \ge K$.
One of the recent milestones is that $\Ric_N \ge K$ is equivalent to
the \emph{curvature-dimension condition} CD$(K,N)$ \`a la Lott--Sturm--Villani
for weighted Riemannian (or Finsler) manifolds \cite{LV,Oint,StI,StII,Vi}.
Recently this characterization was generalized to
the (unweighted) Lorentzian situation by McCann \cite{Mc},
followed by a synthetic investigation on Lorentzian length spaces in \cite{CM}.

It is interesting that the parameter $N$ in $\Ric_N$ can be negative,
though it might appear strange if one sticks to the above interpretation
of $N$ as a bound on the dimension from above.
Some comparison theorems can be generalized to the case of $\Ric_N \ge K$
with $N \in (-\infty,0)$ or more generally $N \in (-\infty,1]$,
including the curvature-dimension condition \cite{Oneg,Oneedle},
isoperimetric inequality \cite{Mil}, splitting theorem \cite{Wy},
as well as singularity and splitting theorems in the Lorentzian context \cite{WW1,WW2}.
Then Wylie--Yeroshkin \cite{WY} introduced a different kind of curvature bound,
\begin{equation}\label{eq:WY}
\Ric_1 \ge K\e^{\frac{4}{1-\dim M}\psi}g
\end{equation}
on a weighted Riemannian manifold $(M,g,\psi)$,
where the lower bound is not constant but a function depending on the weight function $\psi$.
This curvature bound naturally arises from a projectively equivalent connection
to the Levi-Civita connection.
Moreover, the \emph{$\psi$-completeness} condition introduced in \cite{Wy},
\begin{equation}\label{eq:p-cplt}
\limsup_{l \to \infty} \inf_{\eta} \int_0^l \e^{\frac{2}{1-\dim M}\psi(\eta(t))} \,\td t=\infty,
\end{equation}
where $\eta$ runs over all unit speed minimal geodesics of length $l$ with the same initial point,
also motivates the study of \eqref{eq:WY}.
In \cite{WY} they established the Bonnet--Myers theorem, Laplacian comparison theorem
and Bishop--Gromov volume comparison theorem among others.
We remark that those comparison theorems do not have counterparts under $\Ric_1 \ge K>0$,
therefore the nonconstant bound \eqref{eq:WY} is essential.
We refer to \cite{Sa1} for the case of manifolds with boundary,
\cite{Sa2} for the curvature-dimension condition, and to
\cite{KW,Wy1} for related works on the \emph{weighted sectional curvature}.
In \cite{KL}, Kuwae--Li considered weighted Riemannian manifolds with
\begin{equation}\label{eq:KL}
\Ric_N \ge K\e^{\frac{4}{N-\dim M}\psi}g, \qquad N \in (-\infty,1],
\end{equation}
and generalized the comparison results in \cite{WY} to the case of $N \in (-\infty,1)$
together with some probabilistic applications.

In our previous paper \cite{LMO}, we introduced the notion of \emph{$\ez$-range}
and its associated completeness condition for spacetimes.
The aim of the present article is to establish comparison theorems with $\ez$-range
which enable us to interpolate the conditions $\Ric_N \ge K$ and \eqref{eq:WY}
and explain the reason why \eqref{eq:WY} and \eqref{eq:KL}
are admissible for those results in \cite{KL,WY}
while $\Ric_N \ge K$ with $N \in (-\infty,1] \cup \{+\infty\}$ is not.
Precisely, we showed in \cite{LMO} some singularity theorems for weighted Finsler spacetimes
under $\Ric_N \ge 0$ and the \emph{$\ez$-completeness} condition
\[ \int \e^{\frac{2(\ez-1)}{\dim M -1}\psi(\dot{\eta}(t))} \,\td t=\infty \]
inspired by \eqref{eq:p-cplt}, where $\ez$ is taken from the $\ez$-range
\begin{equation}\label{eq:erange}
\epsilon=0 \,\text{ for } N=1, \qquad
 \vert\epsilon\vert < \sqrt{\frac{N-1}{N-n}} \,\text{ for } N \neq 1,n, \qquad
 \ez \in \R \,\text{ for } N=n.
\end{equation}
(In order to avoid confusion, in this introduction we always set $\dim M=n$,
though $\dim M=n+1$ in \cite{LMO} (and Sections~\ref{sc:LFmfd}, \ref{sc:LFcomp} below)
as usual in Lorentzian geometry.)
Note that, on the one hand, $\ez=0$ corresponding to \cite{WY} is admissible for all $N$
and $\ez=(N-1)/(N-n)$ as in \cite{KL} is allowed for $N \le 1$.
On the other hand, $\ez=1$ corresponding to the constant bound $\Ric_N \ge K$
(and the usual geodesic completeness) is admissible only for $N \in [n,+\infty)$.

We generalize comparison theorems in \cite{KL,WY} under appropriate curvature bounds
including $\ez$.
For example, our \emph{Bonnet--Myers theorem} (Theorem~\ref{th:F-BM})
in the case of a weighted Riemannian manifold $(M,g,\psi)$ asserts that, if
\[ \Ric_N \ge K\e^{\frac{4(\ez -1)}{n-1}\psi} g, \qquad \e^{-\frac{2(\ez-1)}{n-1}\psi} \le b \]
for some $N \in (-\infty,1] \cup [n,+\infty]$, $\ez$ in the $\ez$-range \eqref{eq:erange} and $K,b>0$,
then the diameter of $M$ is bounded above by $b\pi/\sqrt{cK}$, where
\[ c=\frac{1}{n-1} \bigg( 1-\ez^2 \frac{N-n}{N-1} \bigg) >0. \]
This recovers the standard Bonnet--Myers theorem for $N \in [n,+\infty)$, $\ez=1$ and $b=1$
($c=1/(N-1)$), as well as the results in \cite{KL,WY} for $N \in (-\infty,1]$
and $\ez=(N-1)/(N-n)$ ($c=1/(n-N)$)
(see Remark~\ref{rm:b} for an alternative statement in terms of
a deformed distance structure without the bound $\e^{-\frac{2(\ez-1)}{n-1}\psi} \le b$ on $\psi$).

Besides the Bonnet--Myers theorem,
we also establish the \emph{Laplacian comparison theorem}
and \emph{Bishop--Gromov volume comparison theorem}
(in the latter the weight function $\psi$ is induced from a given measure $\fm$ on $M$),
in both weighted Finsler manifolds and weighted Finsler spacetimes.
We remark that those results for $\ez \neq (N-1)/(N-n)$ with $N<1$
or for $\ez \neq 1$ with $N \in [n,+\infty]$ are new even in the weighted Riemannian setting.
Furthermore, for the Bonnet--Myers and Laplacian comparison theorems on Finsler manifolds,
our results cover both the unweighted case \cite{BCS}
and the weighted case associated with measures \cite{Oint,OSheat};
this unification is not included in the literature.
As for future work,
it would be interesting to compare our comparison theorems
on weighted Finsler spacetimes with the recent synthetic investigations in \cite{CM,Mc}.
We refer to \cite{KS1,KS2,KS3} for some follow-up works on comparison geometry with $\ez$-range.

This article is divided into two parts.
The first part is devoted to weighted Finsler manifolds.
We recall  necessary concepts in Finsler geometry in Section~\ref{sc:Fmfd}
and develop the comparison theorems with $\ez$-range in Section~\ref{sc:Fcomp}.
The second part is devoted to weighted Finsler spacetimes.
In Section~\ref{sc:LFmfd} we review Lorentz--Finsler geometry,
causality theory and some analytic notions.
Finally, in Section~\ref{sc:LFcomp} we obtain the Lorentzian versions of the comparison theorems.

Although some arguments could be unified to a single framework,
we shall discuss the Finsler and Lorentz--Finsler cases rather separately
and present the proofs of comparison theorems in their each common languages,
for the sake of accessibility and hopefully motivating interactions between
Riemannian and Lorentzian geometries.

\section{Preliminaries for Finsler manifolds}\label{sc:Fmfd}%%%%%%%%%%%%%
%%%%%%%%%%%%%%%%%%%%%%%%%

We first consider comparison theorems on weighted Finsler manifolds.
We refer to \cite{BCS,Obook,Shlec} for the basics of Finsler geometry
(we will follow the notations in \cite{Shlec}).
Throughout this and the next sections,
let $M$ be a connected $C^\infty$-manifold without boundary of dimension $n \ge 2$.

\subsection{Finsler manifolds}\label{ssc:Fmfd}%%%%%%%%%%%%%
%%%%%%%%%%%%%%%%%%%%%%%%%

Given local coordinates $(x^i)_{i=1}^n$ on an open set $U \subset M$,
we will always use the fiber-wise linear coordinates
$(x^i,v^j)_{i,j=1}^n$ of $TU$ such that
\[ v=\sum_{j=1}^n v^j \frac{\del}{\del x^j}\Big|_x \in T_xM, \qquad x \in U. \]

\begin{definition}[Finsler structures]\label{df:Fstr}
We say that a nonnegative function $F:TM \lra [0,+\infty)$ is
a \emph{$C^{\infty}$-Finsler structure} of $M$ if the following three conditions hold:
\begin{enumerate}[(1)]
\item(\emph{Regularity})
$F$ is $C^{\infty}$ on $TM \setminus 0$,
where $0$ stands for the zero section;

\item(\emph{Positive $1$-homogeneity})
It holds $F(cv)=cF(v)$ for all $v \in TM$ and $c>0$;

\item(\emph{Strong convexity})
The $n \times n$ symmetric matrix
\begin{equation}\label{eq:gij}
\big( g_{ij}(v) \big)_{i,j=1}^n :=
 \bigg( \frac{1}{2}\frac{\del^2 [F^2]}{\del v^i \del v^j}(v) \bigg)_{i,j=1}^n
\end{equation}
is positive-definite for all $v \in TM \setminus 0$.
\end{enumerate}
We call such a pair $(M,F)$ a ($C^{\infty}$-)\emph{Finsler manifold}.
\end{definition}

In other words, $F$ provides a smooth Minkowski norm on each tangent space
which varies smoothly in horizontal directions as well.
If $F(-v)=F(v)$ for all $v \in TM$, then we say that $F$ is \emph{reversible}
or \emph{absolutely homogeneous}.

For $x,y \in M$, we define the (asymmetric) \emph{distance} from $x$ to $y$ by
\[ d(x,y):=\inf_{\eta} \int_0^1 F\big( \dot{\eta}(t) \big) \,\td t, \]
where $\eta:[0,1] \lra M$ runs over all $C^1$-curves such that $\eta(0)=x$ and $\eta(1)=y$.
Note that $d(y,x) \neq d(x,y)$ can happen since $F$ is only positively homogeneous.
A $C^{\infty}$-curve $\eta$ on $M$ is called a \emph{geodesic}
if it is locally minimizing and has a constant speed with respect to $d$,
similarly to Riemannian or metric geometry.
See \eqref{eq:geod} below for the precise geodesic equation.
For $v \in T_xM$, if there is a geodesic $\eta:[0,1] \lra M$
with $\dot{\eta}(0)=v$, then we define the \emph{exponential map}
by $\exp_x(v):=\eta(1)$.
We say that $(M,F)$ is \emph{forward complete} if the exponential
map is defined on the whole $TM$.
Then the \emph{Hopf--Rinow theorem} ensures that any pair of points
is connected by a minimal geodesic and that
every forward bounded closed set is compact (see \cite[Theorem~6.6.1]{BCS};
$A \subset M$ is said to be \emph{forward bounded}
if $\sup_{y \in A} d(x,y)<\infty$ for some (or, equivalently, for all) $x \in M$).

For $v \in T_xM \setminus \{0\}$, the positive-definite matrix
$(g_{ij}(v))_{i,j=1}^n$ in \eqref{eq:gij} induces the Riemannian structure $g_v$ of $T_xM$ by
\begin{equation}\label{eq:gv}
g_v\Bigg( \sum_{i=1}^n a_i \frac{\del}{\del x^i}\Big|_x,
 \sum_{j=1}^n b_j \frac{\del}{\del x^j}\Big|_x \Bigg)
 := \sum_{i,j=1}^n g_{ij}(v) a_i b_j.
\end{equation}
Note that this definition is coordinate-free and $g_v(v,v)=F^2(v)$ holds.
One can regard $g_v$ as the best Riemannian approximation of $F|_{T_xM}$
in the direction $v$.
The \emph{Cartan tensor}
\[ C_{ijk}(v):=\frac{1}{2} \frac{\del g_{ij}}{\del v^k}(v),
 \qquad v \in TM \setminus 0, \]
measures the variation of $g_v$ in the vertical directions,
and vanishes everywhere on $TM \setminus 0$
if and only if $F$ comes from a Riemannian metric.
We remark that
\begin{equation}\label{eq:A}
\sum_{i=1}^n C_{ijk}(v) v^i =\sum_{j=1}^n C_{ijk}(v) v^j =\sum_{k=1}^n C_{ijk}(v) v^k =0
\end{equation}
by Euler's homogeneous function theorem (\cite[Theorem~1.2.1]{BCS}).

Define the \emph{formal Christoffel symbol}
%\begin{equation}\label{eq:gamma}
\[ \gamma^i_{jk}(v):=\frac{1}{2}\sum_{l=1}^n g^{il}(v) \bigg\{
 \frac{\del g_{lk}}{\del x^j}(v) +\frac{\del g_{jl}}{\del x^k}(v)
 -\frac{\del g_{jk}}{\del x^l}(v) \bigg\} \]
%\end{equation}
for $v \in TM \setminus 0$,
where $(g^{ij}(v))$ denotes the inverse matrix of $(g_{ij}(v))$,
and the \emph{geodesic spray coefficients} and the \emph{nonlinear connection}
\[ G^i(v):=\frac{1}{2}\sum_{j,k=1}^n \gamma^i_{jk}(v) v^j v^k, \qquad
 N^i_j(v):=\frac{\del G^i}{\del v^j}(v) \]
for $v \in TM \setminus 0$ ($G^i(0)=N^i_j(0):=0$ by convention).
Observe that $G^i$ is positively $2$-homogeneous ($G^i(cv)=c^2 G^i(v)$ for $c>0$)
and we have $\sum_{j=1}^n N^i_j(v) v^j=2G^i(v)$.
By using $N^i_j$,
the coefficients of the \emph{Chern connection} are given by
%\begin{equation}\label{eq:Gamma}
\[ \Gamma^i_{jk}(v):=\gamma^i_{jk}(v)
 -\sum_{l,m=1}^n g^{il}(v) (C_{lkm}N^m_j +C_{jlm}N^m_k -C_{jkm}N^m_l)(v) \]
%\end{equation}
on $TM \setminus 0$.

\begin{definition}[Covariant derivative]\label{df:covd}
The \emph{covariant derivative} of a vector field $X$ by $v \in T_xM$
with \emph{reference vector} $w \in T_xM \setminus \{0\}$ is defined as
\[ D_v^w X(x):=\sum_{i,j=1}^n \Bigg\{ v^j \frac{\del X^i}{\del x^j}(x)
 +\sum_{k=1}^n \Gamma^i_{jk}(w) v^j X^k(x) \Bigg\} \frac{\del}{\del x^i}\Big|_x \in T_xM. \]
\end{definition}

The \emph{geodesic equation} is then written  with the help of \eqref{eq:A} as
\begin{equation}\label{eq:geod}
D_{\dot{\eta}}^{\dot{\eta}} \dot{\eta}(t)
 =\sum_{i=1}^n \big\{ \ddot{\eta}^i(t) +2G^i\big( \dot{\eta}(t) \big) \big\}
 \frac{\del}{\del x^i} \Big|_{\eta(t)} =0.
\end{equation}

\subsection{Jacobi fields and Ricci curvature}\label{ssc:Fcurv}%%%%%%%%%%%%%
%%%%%%%%%%%%%%%%%%%%%%%%%

A $C^{\infty}$-vector field $J$ along a geodesic $\eta$ is called a \emph{Jacobi field}
if it is realized as the variational vector field of a variation consisting of geodesics,
namely $J(t)=\del \zeta/\del s(t,0)$ for some $\zeta:[0,l] \times (-\ve,\ve) \lra M$
such that $\zeta(t,0)=\eta(t)$ and $\zeta(\cdot,s)$ is geodesic for every $s \in (-\ve,\ve)$.
A Jacobi field is equivalently characterized by the equation
%\begin{equation}\label{eq:F-Jac}
\[ D^{\dot{\eta}}_{\dot{\eta}} D^{\dot{\eta}}_{\dot{\eta}} J +R_{\dot{\eta}}(J) =0, \]
%\end{equation}
where
\[ R_v(w):=\sum_{i,j=1}^n R^i_j(v) w^j \frac{\del}{\del x^i}\Big|_x \]
for $v,w \in T_xM$ and
\[ R^i_j(v) :=2\frac{\del G^i}{\del x^j}(v)
 -\sum_{k=1}^n \bigg\{ \frac{\del N^i_j}{\del x^k}(v) v^k
 -2\frac{\del N^i_j}{\del v^k}(v) G^k(v) \bigg\}
 -\sum_{k=1}^n N^i_k(v) N^k_j(v) \]
is the \emph{curvature tensor}.

\begin{definition}[Curvatures]\label{df:F-curv}
For linearly independent tangent vectors $v,w \in T_xM$,
we define the \emph{flag curvature} by
\[ \bK(v,w) :=\frac{g_v(R_v(w),w)}{F^2(v) g_v(w,w) -g_v(v,w)^2}. \]
We then define the \emph{Ricci curvature} of $v$ by
\[ \Ric(v) :=F^2(v) \sum_{i=1}^{n-1} \bK(v,e_i), \]
where $\{e_i\}_{i=1}^{n-1} \cup \{v/F(v)\}$ is an orthonormal basis of $(T_xM,g_v)$,
and $\Ric(0):=0$.
\end{definition}

\begin{remark}\label{rm:curv}
Although we will not use it, here we explain a useful connection
between the Riemannian and Finsler curvatures (see, e.g., \cite{Au,Obook,Shlec}).
Given a nonzero vector $v \in T_xM$, let us extend it to a $C^{\infty}$-vector field $V$
on a neighborhood of $x$ such that every integral curve of $V$ is geodesic.
Then the Finsler flag curvature $\bK(v,w)$ for any $w$ coincides with
the sectional curvature of the plane spanned by $v$ and $w$
with respect to the Riemannian metric $g_V$.
In particular, the Finsler Ricci curvature $\Ric(v)$ coincides with
the Riemannian Ricci curvature $\Ric(v,v)$ with respect to $g_V$.
The condition that all integral curves are geodesic is essential.
This characterization sometimes enables us to reduce a Finsler problem to a Riemannian one.
\end{remark}

\subsection{Unweighted Laplacian}\label{ssc:F-Lap}%%%%%%%%%%%%%
%%%%%%%%%%%%%%%%%%%%%%%%%

In order to introduce some analytic tools including the Laplacian and Hessian,
we need the \emph{dual Finsler structure} $F^*:T^*M \lra [0,+\infty)$ to $F$ defined by
\[ F^*(\omega) :=\sup_{v \in T_xM,\, F(v) \le 1} \omega(v)
 =\sup_{v \in T_xM,\, F(v)=1} \omega(v) \]
for $\omega \in T_x^*M$.
It is clear by definition that $\omega(v) \le F^*(\omega) F(v)$ holds.
In the coordinates $(x^i,\omega_j)_{i,j=1}^n$ of $T^*U$ given by
$\omega=\sum_{j=1}^n \omega_j \,\td x^j$,
we will also consider
\[ g^*_{ij}(\omega) :=\frac{1}{2} \frac{\del^2[(F^*)^2]}{\del \omega_i \del \omega_j}(\omega),
 \qquad i,j=1,2,\ldots,n, \]
for $\omega \in T^*U \setminus 0$.

Let us denote by $\LL^*:T^*M \lra TM$ the \emph{Legendre transform}.
Precisely, $\LL^*$ sends $\omega \in T_x^*M$ to the unique element $v \in T_xM$
such that $F(v)=F^*(\omega)$ and $\omega(v)=F^*(\omega)^2$.
In coordinates we can write down
\[ \LL^*(\omega)=\sum_{i,j=1}^n g_{ij}^*(\omega) \omega_i \frac{\del}{\del x^j}\Big|_x
 =\sum_{j=1}^n \frac{1}{2} \frac{\del[(F^*)^2]}{\del \omega_j}(\omega) \frac{\del}{\del x^j}\Big|_x \]
for $\omega \in T_x^*M \setminus \{0\}$ (the latter expression makes sense also at $0$).
Note that $g^*_{ij}(\omega) =g^{ij}(\LL^*(\omega))$ for $\omega \in T_x^*M \setminus \{0\}$.
The map $\LL^*|_{T^*_xM}$ is linear only when $F|_{T_xM}$
comes from an inner product.
%We also define $\LL:=(\LL^*)^{-1}:TM \lra T^*M$.

For a $C^1$-function $f:M \lra \R$, we define the \emph{gradient vector field} of $f$ by
%\begin{equation}\label{eq:F-grad}
\[ \Grad f :=\LL^*(\td f) =\sum_{i,j=1}^n g^*_{ij}(\td f) \frac{\del f}{\del x^i} \frac{\del}{\del x^j}. \]
%\end{equation}
We remark that, to be precise, the latter expression makes sense provided $\td f \neq 0$.
If $f$ is $C^2$ and $\td f(x) \neq 0$, then we define the \emph{Hessian}
$\Grad^2 f:T_xM \lra T_xM$ of $f$ at $x$ by
\begin{equation}\label{eq:F-Hess}
\Grad^2 f(v) :=D^{\Grad f}_v(\Grad f).
\end{equation}
The Hessian is symmetric in the sense that
\[ g_{\Grad f}\big( \Grad^2 f(v),w \big) =g_{\Grad f}\big( v,\Grad^2 f(w) \big) \]
for all $v,w \in T_xM$ (see \cite[Lemma~2.3]{OSbw} or Lemma~\ref{lm:Hess} below).
Then we define the \emph{unweighted Laplacian} of a $C^2$-function $f:M \lra \R$ by
\begin{equation}\label{eq:F-Lap}
\Lap f :=\trace (\Grad^2 f)
\end{equation}
on $\{ x \in M \,|\, \td f(x) \neq 0 \}$.

When $(M,F)$ is equipped with a measure (as in Subsection~\ref{ssc:F-BG}),
we employ the \emph{weighted Laplacian} defined as the divergence
(associated with the measure) of the gradient vector field; see \cite{OSheat} for details.
In this article (except for Subsection~\ref{ssc:F-BG}),
more generally, we shall consider a weight function not necessarily induced from a measure.
Introducing a measure is necessary when we develop analysis on Finsler manifolds,
however, we remark that there is in general no canonical measure
on a Finsler manifold as good as the Riemannian volume measure
(see \cite{ORand} for a related discussion).

\section{Comparison theorems on weighted Finsler manifolds}\label{sc:Fcomp}%%%%%%%%%%%%%
%%%%%%%%%%%%%%%%%%%%%%%%%

\subsection{Weighted Finsler manifolds}\label{ssc:wFmfd}%%%%%%%%%%%%%
%%%%%%%%%%%%%%%%%%%%%%%%%

As a weight, following \cite{LMO},
we employ a positively $0$-homogeneous $C^{\infty}$-function
on the slit tangent bundle:
\[ \psi: TM \setminus 0 \lra \R, \qquad \psi(cv)=\psi(v) \,\ \text{for all}\ c>0. \]
For a nonconstant geodesic $\eta$, we define
\begin{equation}\label{eq:psi_eta}
\psi_{\eta}(t):=\psi \big( \dot{\eta}(t) \big).
\end{equation}

\begin{definition}[Weighted Ricci curvature]\label{df:F-wRic}
Given $v \in TM \setminus 0$, let $\eta:(-\ve,\ve) \lra M$ be the geodesic with $\dot{\eta}(0)=v$.
Then, for $N \in \R \setminus \{n\}$, define the \emph{weighted Ricci curvature} by
\begin{equation}\label{eq:F-wRic}
\Ric_N(v) :=\Ric(v) +\psi''_{\eta}(0) -\frac{\psi'_{\eta}(0)^2}{N-n}.
\end{equation}
We also define
\[ \Ric_{\infty}(v) :=\lim_{N \to \infty} \Ric_N(v) =\Ric(v) +\psi''_{\eta}(0),
 \qquad \Ric_n(v) :=\lim_{N \downarrow n} \Ric_N(v), \]
and $\Ric_N(0):=0$.
\end{definition}

By definition we observe the following monotonicity:
For $N \in (n,+\infty)$ and $N' \in (-\infty,1)$,
\begin{equation}\label{eq:mono}
\Ric_n(v) \le \Ric_N(v) \le \Ric_{\infty}(v) \le \Ric_{N'}(v) \le \Ric_1(v).
\end{equation}
Thereby bounding $\Ric_1$ from below is a weaker condition than that for $\Ric_{\infty}$.
By $\Ric_N \ge K$ we will mean that $\Ric_N(v) \ge KF^2(v)$ holds for some $K \in \R$
and all $v \in TM$.

This framework generalizes the weighted Ricci curvature
associated with a measure introduced in \cite{Oint} (see also \cite{Obook}).
When $M$ is equipped with a positive $C^{\infty}$-measure $\fm$
(i.e., in each local chart, the density function of $\fm$
with respect to the Lebesgue measure is positive and $C^{\infty}$),
the corresponding weight function $\psi_{\fm}$ is given by
\begin{equation}\label{eq:psi_m}
\td\fm =\e^{-\psi_{\fm}(\dot{\eta}(t))} \sqrt{\det\big[ g_{ij}\big(\dot{\eta}(t) \big) \big]}
 \,\td x^1 \td x^2 \cdots \td x^n
\end{equation}
along geodesics $\eta$.
Notice that $\sqrt{\det[g_{ij}(\dot{\eta}(t))]} \,\td x^1 \td x^2 \cdots \td x^n$ is the volume measure
for the Riemannian metric $g_{\dot{\eta}}$ along $\eta$.
Then, for $(M,F,\fm)$ satisfying $\Ric_N \ge K$, we can obtain various comparison theorems
including those we will extend in this article (\cite{Oint,OSheat}),
as well as the curvature-dimension condition (\cite{Oint,Oneg,Oneedle})
and the needle decomposition (\cite{Oneedle}) among others.
Compared with $\psi_{\fm}$,
our general weight function $\psi$ on $TM \setminus 0$
allows us to include in the analysis the \emph{unweighted case},
which is indeed recovered for $\psi \equiv 0$ (cf.\ \eqref{eq:F-wRic}).
We also remark that, in the Riemannian case, it is common to employ a function on $M$
as a weight function.
This is because any measure $\fm$ is written as $\fm=\e^{-\psi} \,\vol_g$,
and then $\psi \in C^{\infty}(M)$ is the weight function.

In our previous paper \cite{LMO}, inspired by Wylie's work \cite{Wy},
we introduced a completeness condition with respect to a parameter $\ez \in \R$
in a certain range specified later.
We shall follow the same lines in the Finsler setting.

\begin{definition}[$\ez$-completeness]\label{df:e-cplt}
A geodesic $\eta:[0,l) \lra M$ ($l \in (0,+\infty]$) is said to be \emph{forward $\ez$-complete} if
\[ \int_0^l \e^{\frac{2(\ez -1)}{n-1}\psi_{\eta}(t)} \,\td t =\infty. \]
We say that $(M,F,\psi)$ is \emph{forward $\ez$-complete}
if any geodesic $\eta:[0,\delta) \lra M$ in $M$
can be extended to a forward $\ez$-complete geodesic.
\end{definition}

The case of $\ez=1$ is the usual forward completeness in Finsler geometry,
and the case of $\ez=0$ was introduced in \cite{Wy}
and further studied in \cite{Sa1,Sa2,WY} for Riemannian manifolds.
We also remark that, if $(\ez-1)\psi$ is bounded below,
then the forward completeness implies the forward $\ez$-completeness.
The reason behind these different choices of $\ez$ is understood
by introducing the admissible range of $\ez$ depending on $N$,
called the \emph{$\ez$-range} introduced in \cite[Proposition~5.8]{LMO},
where we showed the existence of a conjugate point within the $\ez$-range.
In the current setting, we define as follows.

\begin{definition}[$\ez$-range]\label{df:F-eran}
Given $N \in (-\infty,1] \cup [n,+\infty]$, we will consider $\ez \in \R$ in the following
\emph{$\ez$-range}:
\begin{equation}\label{eq:F-eran}
\epsilon=0 \,\text{ for } N=1, \qquad
 \vert\epsilon\vert < \sqrt{\frac{N-1}{N-n}} \,\text{ for } N \neq 1,n, \qquad
 \ez \in \R \,\text{ for } N=n.
\end{equation}
We also define the associated constant $c =c(N,\ez)$ by
\begin{equation}\label{eq:F-c}
c :=\frac{1}{n-1}\lf(1-\ez^2\frac{N-n}{N-1}\r) >0
\end{equation}
for $N \ne 1$.
If $\ez=0$, then one can take $N \to 1$ and set $c(1,0):=1/(n-1)$.
\end{definition}

Note that $\ez=1$ is admissible only for $N \in [n,+\infty)$, while $\ez=0$ is always admissible.

\subsection{Bonnet--Myers theorem}\label{ssc:F-BM}%%%%%%%%%%%%%
%%%%%%%%%%%%%%%%%%%%%%%%%

We first consider the Bonnet--Myers diameter bound taking the $\ez$-range into account.
The case of $N \in [n,+\infty)$ and $\ez=1$ (so that $c=1/(N-1)$) can be found in \cite{Oint}.

Let us first illustrate some common notations used in the proofs of the comparison theorems.
Given a unit tangent vector $v \in U_xM :=T_xM \cap F^{-1}(1)$,
let $\eta:[0,l) \lra \R$ be the geodesic with $\dot{\eta}(0)=v$.
We take an orthonormal basis $\{e_i\}_{i=1}^n$ of $(T_xM,g_v)$ with $e_n=v$
and consider the Jacobi fields
\[ E_i(t):=(\td\exp_x)_{tv}(te_i), \quad i=1,2,\ldots,n-1, \]
along $\eta$.
Define the $(n-1) \times (n-1)$ matrices $A(t)=(a_{ij}(t))$ and $B(t)=(b_{ij}(t))$ by
\[ a_{ij}(t):=g_{\dot{\eta}}\big( E_i(t),E_j(t) \big) ,\qquad
 D^{\dot{\eta}}_{\dot{\eta}}E_i(t) =\sum_{j=1}^{n-1} b_{ij}(t)E_j(t). \]
We also define $R(t)=(R_{ij}(t))$ by
\[ R_{ij}(t) :=g_{\dot{\eta}}\big( R_{\dot{\eta}}(E_i(t)),E_j(t) \big)
 =g_{\dot{\eta}}\big( R_{\dot{\eta}}(E_j(t)),E_i(t) \big). \]
We summarize some necessary properties of $A,B$ and $R$.

\begin{lemma}\label{lm:F-Bish}
\begin{enumerate}[{\rm (i)}]
\item We have $BA=AB^{\sT}$ and $A'=2BA$, where $B^{\sT}$ is the transpose of $B$.
\item $A^{-1/2}BA^{1/2}$ is symmetric.
\item The \emph{Riccati equation}
\begin{equation}\label{eq:F-Ricc}
A'' -2B^2 A +2R =0
\end{equation}
holds.
\end{enumerate}
\end{lemma}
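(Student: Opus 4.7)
The plan is to establish (i) first, then obtain (ii) as a direct matrix-algebraic consequence, and finally use the Jacobi equation to derive (iii). The proofs rely on only two standard facts for the Chern connection along a geodesic $\eta$: first, the covariant derivative $D^{\dot\eta}_{\dot\eta}$ with reference vector $\dot\eta$ preserves the Riemannian inner product $g_{\dot\eta}$ along $\eta$, and hence satisfies the Leibniz rule for $g_{\dot\eta}$; second, the curvature endomorphism $R_{\dot\eta}$ is $g_{\dot\eta}$-self-adjoint, which is already visible in the equality $g_{\dot\eta}(R_{\dot\eta}(E_i),E_j) = g_{\dot\eta}(R_{\dot\eta}(E_j),E_i)$ used to define the matrix $R_{ij}$.

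For (i), differentiating $a_{ij} = g_{\dot\eta}(E_i,E_j)$ by the Leibniz rule and inserting the definition of $B$ gives $A' = BA + AB^{\sT}$, so the identity $A' = 2BA$ is equivalent to the Lagrange-type relation $BA = AB^{\sT}$. I would verify the latter by showing that the bilinear form $g_{\dot\eta}(D^{\dot\eta}_{\dot\eta}E_i,E_j) - g_{\dot\eta}(E_i,D^{\dot\eta}_{\dot\eta}E_j)$ is constant along $\eta$: differentiating once and using the Jacobi equation $D^{\dot\eta}_{\dot\eta} D^{\dot\eta}_{\dot\eta} E_i + R_{\dot\eta}(E_i) = 0$ reduces all terms to a difference of curvature inner products that cancels by self-adjointness of $R_{\dot\eta}$. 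Since $E_i(0) = 0$, this constant is $0$, so $BA = AB^{\sT}$.

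For (ii), before the first conjugate point the Jacobi fields $E_i$ are linearly independent, so $A$ is symmetric positive definite and $A^{\pm 1/2}$ are the unique symmetric positive (respectively, inverse) square roots. Transposing $A^{-1/2}BA^{1/2}$ produces $A^{1/2}B^{\sT}A^{-1/2}$, and equality of the two is equivalent, after multiplying by $A^{1/2}$ on both sides, to $BA = AB^{\sT}$, i.e., to (i).

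For (iii), expanding $R_{\dot\eta}(E_i) = \sum_k r_{ik}(t)\, E_k$ in the basis $\{E_k\}$, the definition of $R_{ij}$ immediately gives $R = rA$. On the other hand, a direct computation of $D^{\dot\eta}_{\dot\eta} D^{\dot\eta}_{\dot\eta} E_i$ in the same basis yields $D^{\dot\eta}_{\dot\eta} D^{\dot\eta}_{\dot\eta} E_i = \sum_k (B' + B^2)_{ik} E_k$, so the Jacobi equation is equivalent to the matrix identity $B' + B^2 + r = 0$. Differentiating $A' = 2BA$ once more gives $A'' = 2B'A + 4B^2 A$; substituting $B' = -B^2 - r$ and $rA = R$ then yields $A'' - 2B^2 A + 2R = 0$. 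The only delicate points in the whole argument are the two Chern-connection properties invoked at the outset, which are nontrivial because both $g_{\dot\eta}$ and the connection coefficients depend on the reference vector $\dot\eta$; these are standard in \cite{BCS,Shlec} and should be cited explicitly.
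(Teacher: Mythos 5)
Your proof is correct, and since the paper does not actually spell out a proof of this lemma (it simply refers to \cite[\S 7]{Oint} and \cite[\S 8.1]{Obook}), your self-contained argument is a welcome addition. The three ingredients you use — the Leibniz rule for $g_{\dot\eta}$ along $\eta$ with reference vector $\dot\eta$ (the almost $g$-compatibility of the Chern connection, cf.\ the paper's own use of \cite[Exercise~10.1.2]{BCS} in the proof of Lemma~\ref{lm:Hess}), the Wronskian/Lagrange conservation argument for $BA=AB^{\sT}$ with initial condition $E_i(0)=0$, and the rewrite of the Jacobi equation as $B'+B^2+r=0$ combined with $A'=2BA$ and $R=rA$ — are exactly the standard route to the Riccati equation, and you are right to flag the Chern-connection subtleties explicitly.
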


See \cite[\S 7]{Oint} (or \cite[\S 8.1]{Obook}) for the proof of the lemma, here we only remark that
(ii) readily follows from $BA=AB^{\sT}$ in (i).
We shall prove the \emph{Bishop inequality} in the current setting,
inspired by \cite[Proposition~5.14]{LMO} for weighted Lorentz--Finsler manifolds
(see \cite[\S III.4]{Ch} for the Riemannian case).
This is an essential ingredient of all the comparison theorems in this section.

\begin{proposition}[Bishop inequality]\label{pr:F-Bish}
Let $v \in U_xM$, $\eta:[0,l) \lra M$, $A(t)$, $B(t)$ and $R(t)$ as above.
Given $N \in (-\infty,1] \cup [n,+\infty]$, $\ez$ in the $\ez$-range \eqref{eq:F-eran}
and $c=c(N,\ez)$ as in \eqref{eq:F-c}, we define
\[ h(t) := \e^{-c\psi_\eta(t)}\big(\! \det A(t) \big)^{c/2}, \qquad
 h_1(\tau):=h \big( \varphi_\eta^{-1}(\tau) \big) \]
for $t \in [0,l)$ and $\tau \in [0,\varphi_{\eta}(l))$, where
\begin{equation}\label{eq:phi_e}
\varphi_{\eta}(t) :=\int_0^t \e^{\frac{2(\ez -1)}{n-1}\psi_{\eta}(s)} \,\td s.
\end{equation}
Then, for all $\tau \in (0,\varphi_\eta(l))$, we have
\begin{equation}\label{eq:F-Bish}
h_1''(\tau) \le -ch_1(\tau)\Ric_N \big( (\eta\circ\varphi_\eta^{-1} \big)' (\tau) \big).
\end{equation}
\end{proposition}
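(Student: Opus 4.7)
The plan is to reduce~\eqref{eq:F-Bish} to a single algebraic inequality via a logarithmic derivative computation, the Riccati equation~\eqref{eq:F-Ricc}, a change of parameter $\tau=\varphi_\eta(t)$, and one Cauchy--Schwarz step whose sharpness pins down the constant $c$ and the $\ez$-range.

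Set $\theta:=\trace(B)$. Since $A'=2BA$ (Lemma~\ref{lm:F-Bish}(i)), we have $(\log\det A)'=\trace(A^{-1}A')=2\theta$, so
\[
(\log h)'=-c\psi_\eta'+c\theta.
\]
Differentiating once more and using the Riccati equation~\eqref{eq:F-Ricc} in the form $B'=-B^2-RA^{-1}$, together with the identity $\trace(RA^{-1})=\Ric(\dot\eta)$---verified by writing the Jacobi fields $E_i$ in a $g_{\dot\eta}$-parallel frame so that $A=P^{\sT}P$ and $R=P^{\sT}R_{\mathrm{par}}P$ and applying the cyclic property of the trace---gives
\[
(\log h)''=-c\psi_\eta''-c\trace(B^2)-c\Ric(\dot\eta).
\]

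I then pass to the $\tau$-variable. Writing $\alpha:=2(\ez-1)/(n-1)$ so that $dt/d\tau=e^{-\alpha\psi_\eta}$, the chain rule yields
\[
h_1''(\tau)=e^{-2\alpha\psi_\eta(t)}\bigl(h''(t)-\alpha\psi_\eta'(t)\,h'(t)\bigr).
\]
On the other side of~\eqref{eq:F-Bish}, $(\eta\circ\varphi_\eta^{-1})'(\tau)=e^{-\alpha\psi_\eta(t)}\dot\eta(t)$ and the positive $2$-homogeneity of $\Ric_N$ (inherited from that of $\Ric$ and from $\psi$ being $0$-homogeneous on fibers, so that affine rescaling of the geodesic parameter scales $\psi_\eta'$ and $\psi_\eta''$ by the corresponding powers) give $\Ric_N\bigl((\eta\circ\varphi_\eta^{-1})'(\tau)\bigr)=e^{-2\alpha\psi_\eta}\Ric_N(\dot\eta)$. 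Substituting $h''=\bigl((\log h)''+((\log h)')^2\bigr)h$, dividing by the positive factor $c\,e^{-2\alpha\psi_\eta}h$, cancelling the common terms $-\Ric(\dot\eta)-\psi_\eta''$, and setting $u:=\theta-\psi_\eta'$, the inequality~\eqref{eq:F-Bish} collapses to
\[
c u^2-\alpha\psi_\eta' u-\frac{(\psi_\eta')^2}{N-n}\le\trace(B^2).
\]

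Finally, Cauchy--Schwarz gives $\trace(B^2)\ge\theta^2/(n-1)=(u+\psi_\eta')^2/(n-1)$, which reduces the target to
\[
\Bigl(c-\frac{1}{n-1}\Bigr)u^2-\Bigl(\alpha+\frac{2}{n-1}\Bigr)\psi_\eta' u-\Bigl(\frac{1}{N-n}+\frac{1}{n-1}\Bigr)(\psi_\eta')^2\le 0.
\]
Inserting the defining formulas $c-\tfrac{1}{n-1}=-\tfrac{\ez^2(N-n)}{(n-1)(N-1)}$ (from~\eqref{eq:F-c}) and $\alpha+\tfrac{2}{n-1}=\tfrac{2\ez}{n-1}$ transforms the left-hand side, up to multiplication by the constant $-(n-1)(N-1)(N-n)$ (which is strictly negative on the interior of the $\ez$-range~\eqref{eq:F-eran} since $(N-1)(N-n)>0$ there), into the manifestly nonnegative perfect square $[\ez(N-n)u+(N-1)\psi_\eta']^2$; hence the inequality holds. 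The degenerate cases $N=1$ (where $\ez=0$ and the bound reduces to a direct Cauchy--Schwarz) and $N=n$ (where the definition of $\Ric_N$ forces $\psi_\eta'=0$ whenever the right-hand side is finite) are handled by the same identity in the limit. I expect the main obstacle to lie in this last algebraic verification: the definitions of $c$ and the $\ez$-range have been arranged exactly so that the perfect square appears, but tracking the signs and limits across $N=1$ and $N=n$ while simultaneously invoking Cauchy--Schwarz is the delicate step whose success motivates the whole framework.
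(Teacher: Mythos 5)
Your proof is correct and rests on the same ingredients as the paper's---the Riccati equation, a single Cauchy--Schwarz application $\theta^2 \le (n-1)\trace(B^2)$, the chain rule under the $\varphi_\eta$-reparametrization, and a perfect-square verification whose nonnegativity encodes the $\ez$-range and the formula \eqref{eq:F-c} for $c$. The only difference is bookkeeping: the paper first proves the unweighted Bishop inequality for $h_0=(\det A)^{1/(2(n-1))}$ and then bounds a residual quadratic $\Phi$ in $h_0'/h_0$ and $\psi_\eta'$, whereas you work directly with $(\log h)''$, keep $\trace(B^2)$ until the end, and complete the square in $u=\theta-\psi_\eta'$; since $\theta=(n-1)h_0'/h_0$, the two perfect squares are related by a linear change of variable.
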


When $N \in [n,+\infty)$ and $\ez=1$, we have $c=1/(N-1)$, $\varphi_{\eta}(t)=t$ and $h_1=h$.
Hence \eqref{eq:F-Bish} reduces to the Bishop inequality in the standard form:
\[ h''(t) \le -\frac{\Ric_N(\dot{\eta}(t))}{N-1}h(t). \]
Note also that the parametrization \eqref{eq:phi_e} has the same form as the $\ez$-completeness
(Definition~\ref{df:e-cplt}).

We give here a rather algebraic but streamlined proof.
A different proof, that might give further insights,
could be obtained along the lines of the analogous statement in Subsection~\ref{ssc:LF-BM}
for the Lorentz--Finsler case; see \eqref{eq:LF-Bish}.
That line of proof, however, would require more work in terms of preliminary definitions and results.

\begin{proof}
Put $h_0(t):=(\det A(t))^{1/(2(n-1))}$ and observe from Lemma~\ref{lm:F-Bish} that
\begin{align*}
(n-1)h'_0 &=\frac{h_0}{2}(\det A)^{-1} (\det A)' =\frac{h_0}{2}\trace(A' A^{-1}) =h_0 \trace(B), \\
(n-1)h''_0 &= h'_0 \trace(B) +\frac{h_0}{2}\trace\big( A'' A^{-1} -(A' A^{-1})^2 \big) \\
 &= \frac{h_0}{n-1} (\trace(B))^2 -h_0 \trace(RA^{-1}) -h_0 \trace(B^2).
\end{align*}
The Cauchy--Schwarz inequality (applied to the eigenvalues of $B$)
yields $( \trace B)^2 \le (n-1) \trace(B^2)$
(since $A^{-1/2} B A^{1/2}$ is symmetric),
and hence we obtain the unweighted Bishop inequality:
\begin{equation}\label{eq:L-Bi0}
h''_0(t) \le -\frac{\Ric(\dot{\eta}(t))}{n-1} h_0(t).
\end{equation}
This is the starting point of our estimate.

We first assume $N \in (-\infty,1) \cup (n,+\infty]$.
Since $h(t)=\e^{-c\psi_{\eta}(t)} h_0(t)^{c(n-1)}$, we have
\[ h' = h \cdot \bigg( c(n-1)\frac{h'_0}{h_0} -c\psi'_{\eta} \bigg) \]
and
\begin{align*}
&h'' = h \bigg( c(n-1)\frac{h'_0}{h_0} -c\psi'_{\eta} \bigg)^2
 +h \bigg\{ c(n-1)\frac{h_0 h''_0 -(h'_0)^2}{h_0^2} -c\psi''_{\eta} \bigg\} \\
&= ch \bigg\{ (n-1)\frac{h''_0}{h_0} -\psi''_{\eta} +\big( c(n-1)^2 -(n-1) \big) \frac{(h'_0)^2}{h_0^2}
 -2c(n-1)\frac{h'_0}{h_0}\psi'_{\eta} +c(\psi'_{\eta})^2 \bigg\} \\
&\le -ch \Ric_N(\dot{\eta}) \\
&\quad +ch \bigg\{ (n-1)\big( c(n-1)-1 \big) \frac{(h'_0)^2}{h_0^2}
 -2c(n-1)\frac{h'_0}{h_0}\psi'_{\eta} +\bigg( c-\frac{1}{N-n} \bigg) (\psi'_{\eta})^2 \bigg\},
\end{align*}
where we used \eqref{eq:L-Bi0}.
In order to estimate the remaining terms in the last line,
we observe from $h(t)=h_1(\varphi_{\eta}(t))$ that
\[ h' = h'_1(\varphi_{\eta}) \e^{\frac{2(\ez -1)}{n-1}\psi_{\eta}}, \qquad
 h'' = h''_1(\varphi_{\eta}) \e^{\frac{4(\ez -1)}{n-1}\psi_{\eta}}
 +h' \frac{2(\ez -1)}{n-1} \psi'_{\eta}. \]
Hence we have
\[ h''_1(\varphi_{\eta}) \e^{\frac{4(\ez -1)}{n-1}\psi_{\eta}}
 = h'' -ch \frac{2(\ez -1)}{n-1} \bigg( (n-1)\frac{h'_0}{h_0}\psi'_{\eta} -(\psi'_{\eta})^2 \bigg)
 \le -ch \Ric_N(\dot{\eta}) +ch \Phi, \]
where
\begin{align*}
\Phi &:= (n-1)\big( c(n-1)-1 \big) \frac{(h'_0)^2}{h_0^2}
 -2c(n-1)\frac{h'_0}{h_0}\psi'_{\eta} +\bigg( c-\frac{1}{N-n} \bigg) (\psi'_{\eta})^2 \\
&\ \quad -\frac{2(\ez -1)}{n-1} \bigg( (n-1)\frac{h'_0}{h_0}\psi'_{\eta} -(\psi'_{\eta})^2 \bigg).
\end{align*}
By substituting $c$ from \eqref{eq:F-c} and noticing $(N-n)/(N-1)>0$, we deduce that
\begin{align*}
\Phi &= -\ez^2\frac{(n-1)(N-n)}{N-1} \frac{(h'_0)^2}{h_0^2}
 -2\bigg( \ez -\ez^2 \frac{N-n}{N-1} \bigg) \frac{h'_0}{h_0}\psi'_{\eta} \\
&\quad +\bigg( c-\frac{1}{N-n}+\frac{2(\ez -1)}{n-1} \bigg) (\psi'_{\eta})^2 \\
&=  -\ez^2\frac{(n-1)(N-n)}{N-1} \frac{(h'_0)^2}{h_0^2}
 -2\ez \bigg( 1-\ez \frac{N-n}{N-1} \bigg) \frac{h'_0}{h_0}\psi'_{\eta} \\
&\quad -\bigg( \frac{N-1}{N-n} -2\ez +\frac{\ez^2 (N-n)}{N-1} \bigg)
 \frac{(\psi'_{\eta})^2}{n-1} \\
&= -\bigg( \ez \sqrt{\frac{(n-1)(N-n)}{N-1}} \frac{h'_0}{h_0}
 \pm \sqrt{\frac{N-1}{N-n} -2\ez +\frac{\ez^2 (N-n)}{N-1}} \frac{\psi'_{\eta}}{\sqrt{n-1}} \bigg)^2 \\
& \le 0,
\end{align*}
where we choose `$+$' if $1 -\ve(N-n)/(N-1) \ge 0$ and `$-$' otherwise.
Therefore we obtain
\[ h''_1(\tau) \le -c \e^{-\frac{4(\ez-1)}{n-1}\psi_{\eta}(\varphi_{\eta}^{-1}(\tau))}
 h_1(\tau) \Ric_N\Big( \dot{\eta}\big( \varphi_{\eta}^{-1}(\tau) \big) \Big)
 =-ch_1(\tau) \Ric_N\big( (\eta \circ \varphi_{\eta}^{-1})'(\tau) \big), \]
since
\begin{equation}\label{eq:eta'}
\dot{\eta}(t) =\e^{\frac{2(\ez -1)}{n-1}\psi_{\eta}(t)}
 \cdot (\eta \circ \varphi_{\eta}^{-1})'\big( \varphi_{\eta}(t) \big).
\end{equation}
This completes the proof for $N \in (-\infty,1) \cup (n,+\infty]$.
Then the cases of $N=1,n$ follow by taking the limits.
$\qedd$
\end{proof}

The \emph{diameter} of $(M,F)$ is defined by $\diam(M) :=\sup_{x,y \in M} d(x,y)$.
Along a geodesic $\eta:[0,l) \lra M$, we say that $\eta(t_0)$ is a \emph{conjugate point}
to $\eta(0)$ if there is a nontrivial Jacobi field $J$ vanishing at $0$ and $t_0$.
Equivalently, $\eta(t_0)$ is a conjugate point if $\td(\exp_{\eta(0)})(t_0 \dot{\eta}(0))$
does not have full rank.
In this case, $\eta$ is no more minimizing beyond $t_0$,
so that finding a conjugate point yields, by the Hopf--Rinow theorem,
a diameter bound (and singularity theorems in the Lorentzian setting).

\begin{theorem}[Bonnet--Myers Theorem]\label{th:F-BM}
Let $(M,F,\psi)$ be forward complete and $N \in (-\infty,1] \cup [n,+\infty]$,
$\ez$ in the $\ez$-range \eqref{eq:F-eran}, $K>0$ and $b>0$.
Assume that
\begin{equation}\label{eq:riclb}
\Ric_N(v) \ge KF^2(v) \e^{\frac{4(\ez-1)}{n-1}\psi(v)}
\end{equation}
holds for all $v \in TM \setminus 0$ and
\begin{equation}\label{eq:wb}
\e^{-\frac{2(\ez-1)}{n-1}\psi}\le b.
\end{equation}
Then we have
\[ \diam(M) \le \frac{b\pi}{\sqrt{cK}}. \]
In particular, $M$ is compact and has finite fundamental group.
\end{theorem}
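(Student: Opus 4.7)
The plan is to argue by contradiction. Assume that some pair of points $x,y \in M$ satisfies $d(x,y) > b\pi/\sqrt{cK}$. By the hypothesized forward completeness and the Hopf--Rinow theorem there is a unit-speed minimal geodesic $\eta:[0,l] \lra M$ from $x$ to $y$ with $l = d(x,y) > b\pi/\sqrt{cK}$. I aim to exhibit a point conjugate to $\eta(0)$ strictly inside $(0,l)$, which is impossible along a minimizing geodesic and hence yields the contradiction.

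The engine is Proposition~\ref{pr:F-Bish} applied along $\eta$. The crucial observation is that along the reparametrized curve the $\psi$-dependent factors in the assumed lower bound \eqref{eq:riclb} cancel exactly. Indeed, by \eqref{eq:eta'} and the positive $1$-homogeneity of $F$ one has $F((\eta\circ\varphi_{\eta}^{-1})'(\tau)) = \e^{-\frac{2(\ez-1)}{n-1}\psi_{\eta}(t)}$ with $\tau = \varphi_{\eta}(t)$, while the $0$-homogeneity of $\psi$ gives $\psi((\eta\circ\varphi_{\eta}^{-1})'(\tau)) = \psi_{\eta}(t)$. Substituting these into \eqref{eq:riclb} makes the exponentials in \eqref{eq:F-Bish} collapse to $1$, so the Bishop inequality becomes the clean Sturm-type differential inequality
\[
h_1''(\tau) \le -cK\, h_1(\tau), \qquad \tau \in [0,\varphi_{\eta}(l)),
\]
with $h_1(0)=0$ and $h_1(\tau) > 0$ on a small initial interval (because $\det A(t) \sim t^{2(n-1)}$ near $0$ implies $h_1(\tau) \sim \mathrm{const} \cdot \tau^{c(n-1)}$).

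Comparison with the model function $s(\tau) := \sin(\sqrt{cK}\,\tau)$, which solves $s''+cKs=0$ and has its first positive zero at $\pi/\sqrt{cK}$, then forces $h_1$ to vanish somewhere in $(0,\pi/\sqrt{cK}]$: the standard proof multiplies the two equations by each other's solution, subtracts and integrates over $[0,\pi/\sqrt{cK}]$, and uses the boundary values $s(0)=s(\pi/\sqrt{cK})=0$ to derive a contradiction with positivity of $h_1$ at $\pi/\sqrt{cK}$. On the other hand, the weight bound \eqref{eq:wb} reads $\e^{\frac{2(\ez-1)}{n-1}\psi_{\eta}(s)} \ge 1/b$, so
\[
\varphi_{\eta}(l) = \int_0^l \e^{\frac{2(\ez-1)}{n-1}\psi_{\eta}(s)}\,\td s \ge \frac{l}{b} > \frac{\pi}{\sqrt{cK}}.
\]
Hence the zero $\tau_0 \in (0,\pi/\sqrt{cK}]$ of $h_1$ lies in the interior $(0,\varphi_{\eta}(l))$, so $\det A(\varphi_{\eta}^{-1}(\tau_0)) = 0$ and $\eta(\varphi_{\eta}^{-1}(\tau_0))$ is a conjugate point in the interior of $\eta$, contradicting its minimality. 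The resulting diameter bound, combined with forward completeness and Hopf--Rinow, makes $M$ compact; lifting $F$ and $\psi$ to the universal cover $\widetilde M$ preserves all hypotheses (homogeneity, curvature bound, bound on $\psi$), so $\widetilde M$ is compact too and $\pi_1(M)$ is finite.

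The step I expect to require the most care is the cancellation in the second paragraph: one must verify precisely that applying \eqref{eq:riclb} to the reparametrized velocity $(\eta\circ\varphi_{\eta}^{-1})'(\tau)$ removes every occurrence of $\psi$ from the inequality \eqref{eq:F-Bish}, so that $b$ enters only through the comparison between the arc-length $l$ and the $\ez$-parameter $\varphi_{\eta}(l)$; otherwise the resulting Sturm comparison will not give the sharp constant $cK$ needed for the asserted bound $b\pi/\sqrt{cK}$.
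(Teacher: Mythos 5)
Your proposal is correct and follows essentially the same route as the paper: reparametrize by $\varphi_\eta$, note that \eqref{eq:eta'} together with the $1$-homogeneity of $F$ and the $0$-homogeneity of $\psi$ make the exponential factors in \eqref{eq:riclb} and \eqref{eq:F-Bish} cancel exactly, reduce to the clean ODE inequality $h_1'' \le -cK h_1$, and conclude by Sturm comparison with $\sin(\sqrt{cK}\tau)$ and the bound $\varphi_\eta \ge t/b$ from \eqref{eq:wb}. The logical packaging (contradiction via an interior conjugate point on a minimal geodesic versus showing that every unit-speed geodesic develops a conjugate point by length $b\pi/\sqrt{cK}$) is an immaterial difference.

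The one point you gloss over is the boundary term of the Sturm comparison at $\tau=0$. Since $h_1(\tau) = O(\tau^{c(n-1)})$ with $0 < c(n-1) \le 1$, $h_1'$ may blow up as $\tau \to 0$, so the limit $\lim_{\tau\to 0}\big(h_1'(\tau)\bs(\tau) - h_1(\tau)\bs'(\tau)\big)$ is a genuine $\infty\cdot 0$ indeterminacy and the crude asymptotic ``$h_1 \sim \mathrm{const}\cdot\tau^{c(n-1)}$'' does not by itself give control on $h_1'$; indeed the paper explicitly warns that $h_1=O(\tau^{c(n-1)})$ does not imply that $\tau\mapsto \tau h_1(\tau)$ is $C^1$ at $0$. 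The paper closes this by a concavity argument on $\hat h(\tau) = h_1(\tau) - \tfrac{|cK|}{2}\tau^2$ (which is concave near $0$ by the differential inequality) to show that $\lim_{\tau\to 0}\tau h_1'(\tau)$ exists and is $\le 0$. You should either reproduce that argument or justify the boundary term by invoking the smooth Jacobi-field expansion $\det A(t) = t^{2(n-1)}(1+O(t^2))$ (which is differentiable in $t$) to get $\tau h_1'(\tau)\to 0$; the heuristic asymptotic alone is not enough.
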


We remark that, to be precise, the forward completeness is a condition on $(M,F)$
and the weight function $\psi$ plays a role in \eqref{eq:riclb} and \eqref{eq:wb}.

\begin{proof}
We will use the same notations as in Proposition~\ref{pr:F-Bish},
and show that any unit speed geodesic $\eta$ necessarily has a conjugate point
by the length $b\pi/\sqrt{cK}$.
By the Bishop inequality \eqref{eq:F-Bish} and the hypothesis \eqref{eq:riclb}
combined with \eqref{eq:eta'}, we have for positive $\tau$
\[ h''_1(\tau) \le -ch_1(\tau) K. \]

Now we shall prove that the limit $\lim_{\tau \to 0} \tau h_1'(\tau)$ exists and is nonpositive.
Here we present a simple argument based on the above Bishop inequality.
%instead of a precise representation of $h_1$.
Moreover, in this paragraph we are going to consider general $K \in \R$,
for later reference to this proof in the proofs of the Laplacian and Bishop--Gromov comparison theorems.
We observe from the definition of $h_1$ that $h_1(\tau)=O(\tau^{c(n-1)})$ as $\tau \to 0$,
and $0<c(n-1) \le 1$.
Hence $\tau h_1(\tau)$ is differentiable at $0$, however,
we need to be careful because it does not necessarily imply that $\tau h_1(\tau)$ is $C^1$ at $0$.
By the continuity of $h_1$, for sufficiently small $\tau>0$, we have
$\vert h_1(\tau) \vert \le 1$ and in particular $h''_1(\tau) \le \vert cK\vert$.
Hence the function $\hat h(\tau):=h_1(\tau)-\frac{\vert cK\vert }{2} \tau^2$ is concave in $\tau$ near $\tau =0$.
Let $f(\tau): = \hat h(\tau)-\tau \hat h'(\tau)$ be the ordinate of the intersection
between the tangent to the graph of $\hat h$ at $(\tau, \hat h(\tau))$ and the vertical axis.
By the concavity of $\hat h$, $f$ is non-decreasing in $\tau>0$ and $f(\tau) \ge \hat h(0)=0$.
%indeed for any $0<\tau<\tau'$
%\[
%\hat h(\tau')-\tau' \hat h'(\tau') \ge  \hat h(\tau)-\tau \hat h'(\tau') \ge  \hat h(\tau)-\tau \hat h'(\tau)
%\]
%Taking the limit $\tau \to 0$ of the concavity inequality $\hat h(\tau')-(\tau'-\tau) \hat h'(\tau) \ge \hat h(\tau)$ gives $f(\tau')\ge 0$.
Therefore the limit $\lim_{\tau \to 0}f(\tau)$ exists and we obtain
\[
 \lim_{\tau \to 0} \tau h_1'(\tau) =\lim_{\tau \to 0} \tau \hat h'(\tau) =-\lim_{\tau \to 0} f(\tau) \le 0.
\]

Comparing $h_1$ with $\bs(\tau):=\sin(\sqrt{cK}\tau)$ which satisfies
$\bs''(\tau)+cK\bs(\tau)=0$, we find
\[
(h_1' \bs -h_1 \bs')' \le 0
\]
and, by $\lim_{\tau \to 0} \tau h_1'(\tau) \le 0$,
\[
\lim_{\tau \to 0} \big( h'_1(\tau) \bs(\tau) -h_1(\tau) \bs'(\tau) \big) \le 0. \]
This implies $h_1' \bs -h_1 \bs' \le 0$ and hence $h_1/\bs$ is non-increasing.
Then, since $\bs(\pi/\sqrt{cK})=0$,
$h_1(\tau_0)=0$ necessarily holds at some $\tau_0 \in (0,\pi/{\sqrt{cK}}]$, and
$\eta(t_0)$ with $t_0:=\varphi_\eta^{-1}(\tau_0)$ is a conjugate point to $x=\eta(0)$.
Noticing $\varphi_\eta(t_0) \ge t_0/b$ by the hypothesis \eqref{eq:wb},
we obtain $t_0 \le b\tau_0 \le b\pi/\sqrt{cK}$.
Since $\eta$ was an arbitrary unit speed geodesic and $(M,F)$ is forward complete,
we conclude that $\diam(M) \le  b\pi/\sqrt{cK}$.

The compactness of $M$ is an immediate consequence of the Hopf--Rinow theorem.
Since the universal cover $\widetilde{M}$ equipped with the lifted metric
and weight function again satisfies \eqref{eq:riclb} and \eqref{eq:wb},
$\widetilde{M}$ is compact and the fundamental group of $M$ is finite.
$\qedd$
\end{proof}

We stress that Theorem~\ref{th:F-BM} covers both the unweighted and weighted cases simultaneously.
On the one hand, in the unweighted case where $\psi \equiv 0$,
choosing $N=n$, $\ez=1$ and $b=1$ gives the classical (unweighted) Bonnet--Myers bound
$\diam(M) \le \pi\sqrt{(n-1)/K}$ under $\Ric \ge K$ by Auslander \cite{Au}.
On the other hand, when $N \in [n,+\infty)$ and $\ez=1$, we can again take $b=1$ and recover
the weighted Bonnet--Myers bound $\diam(M) \le \pi\sqrt{(N-1)/K}$ under $\Ric_N \ge K$ in \cite{Oint}.
We also remark that, in the remaining case of $N \in (-\infty,1] \cup \{+\infty\}$,
one cannot in general bound the diameter under the constant curvature bound $\Ric_N \ge K$
(see \cite{WY} for some examples).
Therefore, assuming the modified bound $\Ric_N \ge K \e^{\frac{4(\ez-1)}{n-1}\psi}$
with $|\ez|<1$ is essential.
Moreover, by virtue of the monotonicity \eqref{eq:mono},
one can easily construct an example satisfying \eqref{eq:riclb} for some $N \le 1$
but $\Ric_{\infty}(v)<0$ for some $v$.

\begin{remark}\label{rm:b}
In the above proof we found $\tau_0 =\varphi_{\eta}(t_0) \le \pi/\sqrt{cK}$, which means
\[  \int_0^{t_0} \e^{\frac{2(\ez -1)}{n-1}\psi_{\eta}(s)} \,\td s \le \frac{\pi}{\sqrt{cK}}, \]
without the need for the bound \eqref{eq:wb} on the weight function $\psi$.
This can be regarded as a diameter bound with respect to a deformed length,
studied with $\ez=(N-1)/(N-n)$ in \cite[Theorem~2.2]{WY} ($N=1$)
and \cite[Theorem~2.7]{KL} ($N<1$).
\end{remark}

As a corollary to the theorem and remark above,
we have the following compactness theorem without \eqref{eq:wb}
(see \cite[Corollary~2.3]{WY} and \cite[Corollary~2.8]{KL}).

\begin{corollary}\label{cr:F-BM}
Let $(M,F,\psi)$ be forward complete and $N \in (-\infty,1] \cup [n,+\infty]$,
$\ez$ in the $\ez$-range \eqref{eq:F-eran} and $K>0$.
If
\[ \Ric_N(v) \ge KF^2(v) \e^{\frac{4(\ez-1)}{n-1}\psi(v)} \]
holds for all $v \in TM \setminus 0$ and $(M,F,\psi)$ is forward $\ez$-complete,
then $M$ is compact.
\end{corollary}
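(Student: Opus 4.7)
The plan is to argue by contradiction: assume $M$ is not compact and produce a globally minimizing ray on which the Bishop inequality forces a conjugate point in finite time, contradicting minimality.

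First, I would note that if $M$ is not compact, then by forward completeness and the Hopf--Rinow theorem $M$ cannot be forward bounded from any base point $x$, so there exists a sequence $\{y_k\}\subset M$ with $d(x,y_k)\to\infty$. For each $k$, join $x$ to $y_k$ by a unit speed minimal geodesic $\eta_k:[0,d(x,y_k)]\to M$. By local compactness of $U_xM$ we may extract a subsequence of initial velocities $\dot{\eta}_k(0)$ converging to some $v\in U_xM$; the geodesic $\eta:[0,\infty)\to M$ with $\dot{\eta}(0)=v$, which is defined on all of $[0,\infty)$ by forward completeness, is then a forward ray, i.e.\ minimizing on every finite subinterval.

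Next, I would invoke the hypothesis of forward $\ez$-completeness applied to $\eta$. Since $\eta$ is already defined on $[0,\infty)$ and geodesics are determined uniquely by initial conditions, the forward $\ez$-complete extension of $\eta$ is $\eta$ itself, so
\[ \varphi_{\eta}(\infty) =\int_0^\infty \e^{\frac{2(\ez-1)}{n-1}\psi_{\eta}(t)}\,\td t =\infty, \]
in the notation of \eqref{eq:phi_e}. Hence $\varphi_{\eta}:[0,\infty)\to[0,\infty)$ is a $C^1$-diffeomorphism.

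Now I would apply the core estimate from the proof of Theorem~\ref{th:F-BM}: setting $h_1(\tau):=\e^{-c\psi_{\eta}(\varphi_{\eta}^{-1}(\tau))}(\det A(\varphi_{\eta}^{-1}(\tau)))^{c/2}$ as in Proposition~\ref{pr:F-Bish}, the Bishop inequality \eqref{eq:F-Bish} together with \eqref{eq:riclb} and \eqref{eq:eta'} yield $h_1''(\tau)\le -cK h_1(\tau)$, and the limit computation $\lim_{\tau\to 0}\tau h_1'(\tau)\le 0$ carries over verbatim. Comparing with $\bs(\tau)=\sin(\sqrt{cK}\tau)$ then forces $h_1(\tau_0)=0$ for some $\tau_0\in(0,\pi/\sqrt{cK}]$. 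Because $\varphi_{\eta}$ is surjective onto $[0,\infty)$, the value $t_0:=\varphi_{\eta}^{-1}(\tau_0)\in(0,\infty)$ is finite, and $\eta(t_0)$ is a conjugate point of $\eta(0)$ along $\eta$. This contradicts the fact that $\eta$ is minimizing on $[0,t_0+1]\subset[0,\infty)$, proving that $M$ must be compact.

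The main subtlety I expect is the first step: producing a genuine forward ray in the Finsler (asymmetric) setting from the failure of compactness. Once the ray is in hand, the rest is essentially a reading of Remark~\ref{rm:b}, where the key point is that the diameter bound there is stated intrinsically in the deformed parameter $\tau=\varphi_{\eta}(t)$, so no upper bound on $\e^{-\frac{2(\ez-1)}{n-1}\psi}$ is needed as long as $\varphi_{\eta}$ attains arbitrarily large values---which is exactly what forward $\ez$-completeness guarantees.
$\qedd$
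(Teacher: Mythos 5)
Your proposal is correct, and it begins exactly as the paper does: assume $M$ is not forward bounded, take $y_k$ with $d(x,y_k)\to\infty$, join by unit-speed minimal geodesics $\eta_k$, extract a convergent subsequence of initial velocities $v_k\to v$, and consider the limit geodesic $\eta$. From there, however, you and the paper diverge in how the contradiction is produced, and the difference is worth noting. The paper does \emph{not} upgrade $\eta$ to a forward ray; it applies the deformed diameter estimate of Remark~\ref{rm:b} to each minimizing $\eta_k$, obtaining $\int_0^{d(x,y_k)}\e^{\frac{2(\ez-1)}{n-1}\psi_{\eta_k}}\,\td s\le\pi/\sqrt{cK}$, and passes to the limit $k\to\infty$ (using locally uniform convergence of $\dot\eta_k\to\dot\eta$) to conclude that $\varphi_\eta(\infty)\le\pi/\sqrt{cK}<\infty$, contradicting the $\ez$-completeness of $\eta$. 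You instead invest extra work in showing $\eta$ is a forward ray, then invoke $\ez$-completeness of $\eta$ (so $\varphi_\eta$ is surjective onto $[0,\infty)$), run the Bishop comparison along $\eta$ to force $h_1(\tau_0)=0$ for some $\tau_0\le\pi/\sqrt{cK}$, and contradict the minimality of $\eta$ beyond $t_0=\varphi_\eta^{-1}(\tau_0)$. Both arguments are sound and use the same core machinery; the paper's version is slightly leaner because it avoids the ray lemma (which, as you rightly flag, is the place where additional care is needed in the asymmetric Finsler setting), while yours is perhaps more geometrically transparent, landing the contradiction directly on the existence of a conjugate point along a ray rather than on the finiteness of the deformed length.

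One small remark: $U_xM=T_xM\cap F^{-1}(1)$ is not merely locally compact but genuinely compact (it is a closed bounded subset of a finite-dimensional vector space), which is what licenses extracting the convergent subsequence of $v_k$; this is what the paper tacitly uses.
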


\begin{proof}
It is sufficient to show that $M$ is forward bounded.
By way of contradiction, suppose that there are a point $x \in M$
and a sequence $\{y_k\}_{k \in \N}$ such that $d(x,y_k) \to \infty$.
Let $v_k \in U_xM$ be a unit vector such that $\eta_k(t):=\exp_x(t v_k)$ gives
a minimal geodesic from $x$ to $y_k$.
Taking a subsequence if necessary,
we can assume that $v_k$ converges to some unit vector $v \in U_xM$
and put $\eta(t):=\exp_x(tv)$.
Now, it follows from Remark~\ref{rm:b} that
\[ \int_0^{d(x,y_k)} \e^{\frac{2(\ez -1)}{n-1}\psi_{\eta_k}(s)} \,\td s \le \frac{\pi}{\sqrt{cK}}. \]
Letting $k \to \infty$ yields
\[ \int_0^{\infty} \e^{\frac{2(\ez -1)}{n-1}\psi_{\eta}(s)} \,\td s \le \frac{\pi}{\sqrt{cK}}, \]
which contradicts the $\ez$-completeness of $\eta$.
Therefore $M$ is forward bounded and hence compact by the Hopf--Rinow theorem.
$\qedd$
\end{proof}

\subsection{Laplacian comparison theorem}\label{ssc:F-Lcomp}%%%%%%%%%%%%%
%%%%%%%%%%%%%%%%%%%%%%%%%

Next we deal with the Laplacian comparison theorem for the distance function $u(x)=d(z,x)$
from a fixed point $z \in M$.
We say that $x \in M$ is a \emph{cut point} to $z$ if there is a minimal geodesic
$\eta:[0,1] \lra M$ from $z$ to $x$ such that its extension $\bar{\eta}:[0,1+\ve] \lra M$
is not minimizing for any $\ve>0$ (in fact this holds for any minimal geodesic to a cut point).
The set of all cut points to $z$ is called the \emph{cut locus} of $z$ and denoted by $\Cut(z)$.

Note that $u$ is $C^{\infty}$ outside $\{z\} \cup \Cut(z)$,
and every integral curve of $\Grad u$ is a unit speed geodesic.
Let $\eta:[0,l) \lra M$ be a unit speed minimal geodesic emanating from $z$ without cut point,
then we define the \emph{$\psi$-Laplacian} of $u$ by
\begin{equation}\label{eq:F-wLap}
\Lap_{\psi} u \big( \eta(t) \big) :=\Lap u\big( \eta(t) \big) -\psi'_{\eta}(t).
\end{equation}
Generalizing $\bs$ in the proof of Theorem~\ref{th:F-BM},
we define the comparison function $\bs_{\kappa}$ as
\begin{equation}\label{eq:bs}
\bs_{\kappa}(t) := \begin{cases}
 \frac{1}{\sqrt{\kappa}} \sin(\sqrt{\kappa}t) & \kappa>0, \\
 t & \kappa=0, \\
 \frac{1}{\sqrt{-\kappa}} \sinh(\sqrt{-\kappa}t) & \kappa<0,
 \end{cases}
\end{equation}
where $t \in [0,\pi/\sqrt{\kappa}]$ for $\kappa>0$ and $t \in \R$ for $\kappa \le 0$.
Observe that $\bs_{\kappa}$ solves $\bs''_{\kappa} +\kappa \bs_{\kappa}=0$
with $\bs_{\kappa}(0)=0$ and $\bs'_{\kappa}(0)=1$.

\begin{theorem}[Laplacian comparison theorem]\label{th:F-Lcomp}
Let $(M,F,\psi)$ be forward complete and $N \in (-\infty,1] \cup [n,+\infty]$,
$\ez \in \R$ in the $\ez$-range \eqref{eq:F-eran}, $K \in \R$ and $b \ge a>0$.
Assume that
\[ \Ric_N(v)\ge KF^2(v) \e^{\frac{4(\ez-1)}{n-1}\psi(v)} \]
holds for all $v \in TM \setminus 0$ and
\begin{equation}\label{eq:wab}
a \le \e^{-\frac{2(\ez-1)}{n-1}\psi} \le b.
\end{equation}
Then, for any $z \in M$, the distance function $u(x):=d(z,x)$ satisfies
%\begin{equation}\label{eq:F-Lcomp}
\[ \Lap_{\psi} u(x) \le \frac{1}{c\rho} \frac{\bs'_{cK}(u(x)/b)}{\bs_{cK}(u(x)/b)} \]
%\end{equation}
on $M \setminus (\{z\} \cup \Cut(z))$, where $\rho:=a$ if $\bs'_{cK}(u(x)/b) \ge 0$
and $\rho:=b$ if $\bs'_{cK}(u(x)/b)<0$.
\end{theorem}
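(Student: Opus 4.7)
The plan is to work along a unit-speed minimal geodesic $\eta$ from $z$ to $x=\eta(t)$ (available since $x\notin\{z\}\cup\Cut(z)$) with the Jacobi-field notation $A,B$ of Proposition~\ref{pr:F-Bish}. The key observation is that along $\eta$ one has $\Grad u|_{\eta}=\dot{\eta}$ and $D^{\dot{\eta}}_{\dot{\eta}}\dot{\eta}=0$, so the Jacobi-field representation of the Hessian gives $\Lap u(\eta(t))=\trace B(t)=\frac{1}{2}(\log\det A)'(t)$ via Lemma~\ref{lm:F-Bish}(i). Differentiating $\log h(t)=-c\psi_{\eta}(t)+\frac{c}{2}\log\det A(t)$ then yields $h'(t)/h(t)=c\,\Lap_{\psi}u(\eta(t))$, and after the reparametrization $\tau=\varphi_{\eta}(t)$,
\[
\Lap_{\psi}u(\eta(t))=\frac{1}{c}\frac{h_1'(\tau)}{h_1(\tau)}\e^{\frac{2(\ez-1)}{n-1}\psi_{\eta}(t)}.
\]
So bounding $\Lap_{\psi}u$ reduces to bounding $h_1'/h_1$ from above.

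Next I extract a clean scalar ODE from Proposition~\ref{pr:F-Bish}. Using the positive $1$-homogeneity of $F$ and the positive $0$-homogeneity of $\psi$, the reparametrized velocity satisfies $F((\eta\circ\varphi_{\eta}^{-1})'(\tau))^2=\e^{-\frac{4(\ez-1)}{n-1}\psi_{\eta}(t)}$ and $\psi((\eta\circ\varphi_{\eta}^{-1})'(\tau))=\psi_{\eta}(t)$, so the hypothesis $\Ric_N(v)\ge KF^2(v)\e^{\frac{4(\ez-1)}{n-1}\psi(v)}$ precisely cancels the weight in Proposition~\ref{pr:F-Bish}, yielding
\[
h_1''(\tau)\le -cK\,h_1(\tau)
\]
for $\tau\in(0,\varphi_{\eta}(t)]$.

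For the Sturm comparison step, compare $h_1$ with $\bs_{cK}$, which satisfies $\bs_{cK}''+cK\bs_{cK}=0$, $\bs_{cK}(0)=0$, $\bs_{cK}'(0)=1$. The Wronskian $W(\tau):=h_1'(\tau)\bs_{cK}(\tau)-h_1(\tau)\bs_{cK}'(\tau)$ is non-increasing, and the small-$\tau$ concavity argument already used in the proof of Theorem~\ref{th:F-BM} shows $\lim_{\tau\to 0^+}W(\tau)\le 0$, so $W\le 0$. Dividing by $h_1\bs_{cK}>0$ (the positivity of $\bs_{cK}$ on the relevant interval being automatic for $cK\le 0$ and following from the Bonnet--Myers bound in Theorem~\ref{th:F-BM} when $K>0$) gives $h_1'(\tau)/h_1(\tau)\le\bs_{cK}'(\tau)/\bs_{cK}(\tau)$.

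Finally, I pull the inequality back to the variable $u(x)/b$ and extract the correct $\rho$. From \eqref{eq:wab}, $\tau=\varphi_{\eta}(t)\ge t/b=u(x)/b$, and the logarithmic derivative $\bs_{cK}'/\bs_{cK}$ is decreasing on $\{\bs_{cK}>0\}$ (its derivative equals $-cK-(\bs_{cK}'/\bs_{cK})^2$), so $h_1'(\tau)/h_1(\tau)\le\alpha$ where $\alpha:=\bs_{cK}'(u(x)/b)/\bs_{cK}(u(x)/b)$. I expect the main bookkeeping obstacle here to be multiplying this by the positive factor $\e^{\frac{2(\ez-1)}{n-1}\psi_{\eta}(t)}\in[1/b,1/a]$ while preserving the desired direction of the inequality; this forces the dichotomy in $\rho$. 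When $\alpha\ge 0$, one uses the upper bound $\e^{\frac{2(\ez-1)}{n-1}\psi_{\eta}}\le 1/a$ (the subcase $h_1'<0$ being trivially absorbed into the nonnegative target), giving $\rho=a$; when $\alpha<0$, one has $h_1'<0$ automatically (since $h_1'/h_1\le\alpha<0$ and $h_1>0$), and the lower bound $\e^{\frac{2(\ez-1)}{n-1}\psi_{\eta}}\ge 1/b$ yields $\rho=b$. Dividing by $c$ gives the claimed estimate.
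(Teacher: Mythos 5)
Your argument is correct and follows essentially the paper's route: express $\Lap_\psi u$ as $c^{-1}(\log h)'$, reparametrize by $\varphi_\eta$, invoke the Bishop inequality of Proposition~\ref{pr:F-Bish} together with the curvature hypothesis (which indeed cancels exactly against the rescaling of $F^2$ and the $0$-homogeneity of $\psi$ to yield $h_1''\le -cKh_1$), apply the same Sturm comparison argument used in Theorem~\ref{th:F-BM}, and finish with the $a,b$-bookkeeping that produces $\rho$. The one step you compress is the identity $\Lap u(\eta(t))=\trace B(t)=\tfrac12(\log\det A)'(t)$: the paper establishes this by a polar-coordinate computation of $\trace(\Grad^2 u)=\sum_i\Gamma^i_{in}(\Grad u)$ and shows that the Cartan-tensor contribution $-\sum g^{ik}C_{kil}N^l_n(\Grad u)$ vanishes because $N^l_n(\Grad u)=2G^l(\Grad u)=-\ddot\eta^l=0$ along $\eta$; you correctly identify $D^{\dot\eta}_{\dot\eta}\dot\eta=0$ as the key enabler, but the phrase ``Jacobi-field representation of the Hessian'' glosses over exactly this Finsler-specific cancellation, which is the main content of the first half of the paper's proof.
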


Note that we have $\bs'_{cK}(u(x)/b)<0$ only when $K>0$ and $u(x) >b\pi/(2\sqrt{cK})$,
and in this case the assumption $\e^{-\frac{2(\ez-1)}{n-1}\psi} \ge a$ is unnecessary.
We also remark that, if $K>0$, then $u(x)/b<\pi/\sqrt{cK}$ thanks to Theorem~\ref{th:F-BM}
and the hypothesis $x \not\in \Cut(z)$.

\begin{proof}
We fix a unit tangent vector $v \in U_zM$, take the geodesic $\eta(t):=\exp_z(tv)$
and again make use of the same notations as in Subsection~\ref{ssc:F-BM}.
Let $l_v>0$ be the supremum of $t>0$ such that there is no cut point to $z$ on $\eta((0,t))$.
In the polar coordinates $(x^i)_{i=1}^n$ around $\eta((0,l_v))$ such that
$x^n=u$ and $(\del/\del x^i)|_{\eta(t)}=E_i(t)$, we shall first see that
\begin{equation}\label{eq:Lapu}
\Lap_{\psi} u\big( \eta(t) \big) =-\psi'_{\eta}(t)
 +\frac{\td}{\td t}\lf[ \log\lf( \sqrt{\det[g_{ij}(\dot{\eta})]} \r) \r],
\end{equation}
where one can take $\det[g_{ij}(\dot{\eta})]$ for $i,j=1,2,\ldots,n-1$
since $g_{in}(\dot{\eta})=0$ for $i=1,2,\ldots, n-1$ (by the Gauss lemma; see \cite[Lemma~6.1.1]{BCS})
and $g_{nn}(\dot{\eta})=1$.
By comparing \eqref{eq:Lapu} with the definition \eqref{eq:F-wLap} of $\Lap_{\psi}u(\eta(t))$,
it suffices to show that the second term in the right hand side of \eqref{eq:Lapu} coincides with
the unweighted Laplacian $\Lap u(\eta(t))$.
To this end, on the one hand, let us observe $\Grad u=\del/\del x^n$ and
\[ \Grad^2 u\bigg( \frac{\del}{\del x^i} \bigg)
 =D^{\Grad u}_{\del/\del x^i} \bigg( \frac{\del}{\del x^n} \bigg)
 =\sum_{j=1}^n \Gamma^j_{in}(\Grad u) \frac{\del}{\del x^j} \]
(we will suppress the evaluations at $\eta(t)$), and hence
\begin{align*}
\Lap u &=\trace(\Grad^2 u) =\sum_{i=1}^n \Gamma^i_{in}(\Grad u) \\
&=\frac{1}{2} \sum_{i,k=1}^n g^{ik}(\Grad u) \frac{\del g_{ik}}{\del x^n}(\Grad u)
 -\sum_{i,k,l=1}^n g^{ik}(\Grad u) C_{kil}(\Grad u) N_n^l(\Grad u) \\
&= \frac{1}{2} \sum_{i,k=1}^n g^{ik}(\Grad u) \frac{\del g_{ik}}{\del x^n}(\Grad u),
\end{align*}
where we used the geodesic equation \eqref{eq:geod} for $\eta$ to see
$N^l_n(\Grad u) =2G^l(\Grad u) =-\ddot{\eta}^l =0$.
On the other hand,
\[ \frac{\td}{\td t}\lf[ \log\lf( \sqrt{\det[g_{ij}(\dot{\eta})]} \r) \r]
 =\frac{1}{2} \trace\lf[ \bigg( \frac{\td[g_{ij}(\dot{\eta})]}{\td t} \bigg) \cdot \big( g^{jk}(\dot{\eta}) \big) \r]
 =\frac{1}{2} \sum_{i,j=1}^n \frac{\del g_{ij}}{\del x^n}(\dot{\eta}) g^{ji}(\dot{\eta}) \]
since $\ddot{\eta}^l =0$, thereby we obtain \eqref{eq:Lapu}.

Now, putting $h_0=(\det[g_{ij}(\dot{\eta})])^{1/2(n-1)}$ as in the proof of Proposition~\ref{pr:F-Bish},
we find that
\[ \Lap_{\psi} u\big( \eta(t) \big) =-\psi'_{\eta}(t) +\frac{(h_0^{n-1})'}{h_0^{n-1}}(t)
 =\frac{(\e^{-\psi_{\eta}} h_0^{n-1})'}{\e^{-\psi_{\eta}} h_0^{n-1}}(t). \]
Recall that
\[ (\e^{-\psi_{\eta}} h_0^{n-1})(t) =h(t)^{1/c} =h_1\big( \varphi_{\eta}(t) \big)^{1/c}, \]
and one can show that $h_1/\bs_{cK}$ is non-increasing in the same way as in Theorem~\ref{th:F-BM}.
Therefore $ (\e^{-\psi_{\eta}} h_0^{n-1})/\bs_{cK}(\varphi_{\eta})^{1/c}$ is non-increasing
and we have
\[ \frac{(\e^{-\psi_{\eta}} h_0^{n-1})'}{\e^{-\psi_{\eta}} h_0^{n-1}}(t)
 \le \frac{(\bs_{cK}(\varphi_{\eta})^{1/c})'}{\bs_{cK}(\varphi_{\eta})^{1/c}}(t)
 =\frac{1}{c} \frac{\bs'_{cK}(\varphi_{\eta}(t))}{\bs_{cK}(\varphi_{\eta}(t))} \varphi'_{\eta}(t)
 \le \frac{1}{c\rho} \frac{\bs'_{cK}(t/b)}{\bs_{cK}(t/b)} \]
by the fact that $\bs'_{\kappa}/\bs_{\kappa}$ is non-increasing for any $\kappa$
and by $b^{-1} \le \varphi'_{\eta} \le a^{-1}$ from \eqref{eq:wab}.
This completes the proof.
$\qedd$
\end{proof}

\begin{remark}\label{rm:ab}
The intermediate estimate
\[ \Lap_{\psi} u\big( \eta(t) \big) \le \e^{\frac{2(\ez -1)}{n-1}\psi_{\eta}(t)}
 \frac{\bs'_{cK}(\varphi_{\eta}(t))}{c \bs_{cK}(\varphi_{\eta}(t))} \]
(without the bound \eqref{eq:wab} on $\psi$)
in the above proof corresponds to \cite[Theorem~4.4]{WY} ($N=1$)
and \cite[Theorem~2.4]{KL} ($N<1$) for $\ez=(N-1)/(N-n)$ and $c=1/(n-N)$.
When $N \in [n,+\infty)$, $\ez=1$ and $c=1/(N-1)$,
we can take $a=b=1$ and recover \cite[Theorem~5.2]{OSheat}.
\end{remark}

We finally remark that, in the above proof,
we made use of the special property of the distance function $u$
that every integral curve of $\Grad u$ is a geodesic.
In dealing with more general functions, the usefulness of this type of Laplacian
(which is associated with a weight function $\psi$ not necessarily induced from a measure)
has yet to be shown.

\subsection{Bishop--Gromov comparison theorem}\label{ssc:F-BG}%%%%%%%%%%%%%
%%%%%%%%%%%%%%%%%%%%%%%%%

We finally show the Bishop--Gromov volume comparison theorem,
for which we need a measure on $M$.
Let $\fm$ be a positive $C^{\infty}$-measure on $M$ and
$\psi_{\fm}$ be the weight function associated with $\fm$ (recall \eqref{eq:psi_m}).
We define the \emph{forward $r$-ball} of center $x$ as
\[ B^+(x,r):=\{ y \in M \,|\, d(x,y)<r \}. \]

\begin{theorem}[Bishop--Gromov comparison theorem]\label{th:F-BG}
Let $(M,F,\fm)$ be forward complete and $N \in (-\infty,1] \cup [n,+\infty]$,
$\ez \in \R$ in the $\ez$-range \eqref{eq:F-eran}, $K \in \R$ and $b \ge a>0$.
Assume that
\[ \Ric_N(v)\ge KF^2(v) \e^{\frac{4(\ez-1)}{n-1}\psi_{\fm}(v)} \]
holds for all $v \in TM \setminus 0$ and
\[ a \le \e^{-\frac{2(\ez-1)}{n-1}\psi_{\fm}} \le b. \]
Then we have
%\begin{equation}\label{eq:F-BG}
\[ \frac{\fm(B^+(x,R))}{\fm(B^+(x,r))}
\le \frac{b}{a}
 \frac{\int_0^{\min\{R/a,\,\pi/\sqrt{cK}\}} \bs_{cK}(\tau)^{1/c} \,\td\tau}{\int_0^{r/b} \bs_{cK}(\tau)^{1/c} \,\td\tau} \]
%\end{equation}
for all $x\in M$ and $0<r<R$, where $R \le b\pi/\sqrt{cK}$ when $K>0$
and we set $\pi/\sqrt{cK}:=\infty$ for $K \le 0$.
\end{theorem}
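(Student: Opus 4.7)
The plan is to derive the volume comparison from the Bishop-type inequality of Proposition~\ref{pr:F-Bish} via a polar-coordinate decomposition of $\fm$, together with the elementary monotonicity lemma for ratios of integrals. The scheme parallels the classical proof but keeps careful track of the reparametrization $\tau=\varphi_\eta(t)$ of Proposition~\ref{pr:F-Bish} and of the two-sided weight control $a\le\e^{-\frac{2(\ez-1)}{n-1}\psi_\fm}\le b$.

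Fix $x\in M$ and parametrize off the cut locus by the Finsler exponential map: for $v\in U_xM$ set $\eta_v(t):=\exp_x(tv)$ and let $t_c(v)\in(0,+\infty]$ be the parameter of the first cut point. Using the Jacobi field matrix $A(t)$ built from $\eta_v$ as in Subsection~\ref{ssc:F-BM} and the definition \eqref{eq:psi_m} of $\psi_\fm$, there is a Borel measure $\td\nu_x$ on $U_xM$ such that
\[ \fm\big(B^+(x,r)\big) =\int_{U_xM} \int_0^{\min\{r,t_c(v)\}} h_v(t)^{1/c}\,\td t\,\td\nu_x(v), \]
where $h_v$ is the quantity $h$ from Proposition~\ref{pr:F-Bish} along $\eta_v$, so that $h_v(t)^{1/c}=\e^{-\psi_\fm(\dot\eta_v(t))} \sqrt{\det A(t)}$ is precisely the volume density of $\fm$ along $\eta_v$, and the cut locus is $\fm$-negligible. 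Changing variables by $\tau=\varphi_{\eta_v}(t)$ (so $h_v(t)=h_{1,v}(\tau)$ and $\td t=\e^{-\frac{2(\ez-1)}{n-1}\psi_{\eta_v}}\td\tau$), extending $h_{1,v}$ by zero past $\varphi_{\eta_v}(t_c(v))$ to $\tilde h_{1,v}$, and using the weight bound (which also forces $s/b\le\varphi_{\eta_v}(s)\le s/a$), I would then sandwich
\[ a\int_{U_xM}\!\int_0^{r/b}\tilde h_{1,v}(\tau)^{1/c}\,\td\tau\,\td\nu_x(v) \le \fm\big(B^+(x,r)\big), \]
\[ \fm\big(B^+(x,R)\big) \le b\int_{U_xM}\!\int_0^{\min\{R/a,\,\pi/\sqrt{cK}\}}\tilde h_{1,v}(\tau)^{1/c}\,\td\tau\,\td\nu_x(v), \]
the cap at $\pi/\sqrt{cK}$ when $K>0$ being justified by Theorem~\ref{th:F-BM} applied along $\eta_v$, which forces $\tilde h_{1,v}$ to vanish beyond that point.

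To finish, I would invoke the Sturm comparison argument from the proofs of Theorems~\ref{th:F-BM} and \ref{th:F-Lcomp}: under the curvature hypothesis, Proposition~\ref{pr:F-Bish} gives $h_{1,v}''\le-cK h_{1,v}$, whence $\tilde h_{1,v}/\bs_{cK}$ is non-increasing on $(0,+\infty)$. Raising to the $1/c$-th power (which preserves monotonicity as $c>0$) and integrating over $v$ shows that $F(T)/\bs_{cK}(T)^{1/c}$ is non-increasing, where $F(T):=\int_{U_xM}\tilde h_{1,v}(T)^{1/c}\,\td\nu_x(v)$. The elementary fact that ``$p/q$ non-increasing $\Longrightarrow$ $T\mapsto\int_0^T p/\int_0^T q$ is non-increasing'', applied with $T_1=r/b\le T_2=\min\{R/a,\pi/\sqrt{cK}\}$ (using $R>r$, $b\ge a$), yields
\[ \frac{\int_0^{T_2}F(\tau)\,\td\tau}{\int_0^{T_1}F(\tau)\,\td\tau} \le \frac{\int_0^{T_2}\bs_{cK}(\tau)^{1/c}\,\td\tau}{\int_0^{T_1}\bs_{cK}(\tau)^{1/c}\,\td\tau}, \]
which combined with the sandwich produces the claimed bound with the prefactor $b/a$.

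The main obstacle is the cut-locus bookkeeping: one must check that $\tilde h_{1,v}/\bs_{cK}$ remains non-increasing across the jump down to zero at $\tau=\varphi_{\eta_v}(t_c(v))$ (true since $h_{1,v}\ge 0$ there) and that the extension by zero is compatible with the polar volume formula (standard, since the contribution past the cut is zero anyway by minimality considerations). The edge cases $N\in\{1,n\}$ of the $\ez$-range are handled by the same limiting argument as at the end of the proof of Proposition~\ref{pr:F-Bish}.
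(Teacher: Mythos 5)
Your proposal is correct and uses essentially the same approach as the paper: the Bishop inequality of Proposition~\ref{pr:F-Bish} to get $h_1/\bs_{cK}$ non-increasing, the $[a,b]$ sandwich coming from the change of variables $\tau=\varphi_\eta(t)$, Gromov's ratio lemma, and truncation at the cut locus. The only (cosmetic) difference is that you aggregate the radial densities $\tilde h_{1,v}^{1/c}$ over directions $v$ before invoking Gromov's lemma, whereas the paper applies Gromov ray-by-ray and then integrates the resulting direction-independent bound over $U_xM$.
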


\begin{proof}
Given each unit vector $v \in U_xM$ and the geodesic $\eta(t):=\exp_x(tv)$,
$(h_1/\bs_{cK})^{1/c}$ is non-increasing as in the proof of Theorem~\ref{th:F-Lcomp}.
Hence the standard technique using Gromov's lemma (see \cite[Lemma~III.4.1]{Ch}) yields
that the integration is also non-increasing in the sense that
%\begin{equation}\label{eq:h/s}
\[ \frac{\int_0^S h_1(\tau)^{1/c} \,\td \tau}{\int_0^S \bs_{cK}(\tau)^{1/c} \,\td \tau}
 \le \frac{\int_0^s h_1(\tau)^{1/c} \,\td \tau}{\int_0^s \bs_{cK}(\tau)^{1/c} \,\td \tau} \]
%\end{equation}
for $0<s<S$.
Observe from $b^{-1} \le \varphi'_{\eta} \le a^{-1}$ that
\[ \int_0^S h_1(\tau)^{1/c} \,\td \tau
 =\int_0^{\varphi_{\eta}^{-1}(S)} h(t)^{1/c} \varphi'_{\eta}(t) \,\td t
 \ge \frac{1}{b} \int_0^{\varphi_{\eta}^{-1}(S)} h(t)^{1/c} \,\td t \]
and
\[ \int_0^s h_1(\tau)^{1/c} \,\td \tau \le \frac{1}{a} \int_0^{\varphi_{\eta}^{-1}(s)} h(t)^{1/c} \,\td t. \]
Therefore we have
\[  \frac{\int_0^S h(t)^{1/c} \,\td t}{\int_0^s h(t)^{1/c} \,\td t}
 \le \frac{b}{a} \frac{\int_0^{\varphi_{\eta}(S)}
 h_1(\tau)^{1/c} \,\td \tau}{\int_0^{\varphi_{\eta}(s)} h_1(\tau)^{1/c} \,\td \tau}
 \le \frac{b}{a} \frac{\int_0^{\varphi_{\eta}(S)}
 \bs_{cK}(\tau)^{1/c} \,\td \tau}{\int_0^{\varphi_{\eta}(s)} \bs_{cK}(\tau)^{1/c} \,\td \tau}. \]

We shall integrate this inequality in $v \in U_xM$ with respect to the measure $\Xi$
induced from $g_v$.
For each $v \in U_xM$, let $l_v$ be the supremum of $t>0$ satisfying $d(x,\exp_x(tv))=t$.
Then we have, when $K>0$, $\varphi_{\eta}(l_v) \le \pi/\sqrt{cK}$ by the proof of Theorem~\ref{th:F-BM}
(recall Remark~\ref{rm:b}).
Moreover, $t/b \le \varphi_{\eta}(t) \le t/a$.
Therefore we obtain
\begin{align*}
\fm\big( B^+(x,R) \big)
&= \int_{U_xM} \int_0^{\min\{ R,\, l_v \}} h(t)^{1/c} \,\td t \,\Xi(\td v) \\
&\le \frac{b}{a}
 \frac{\int_0^{\min\{R/a,\,\pi/\sqrt{cK}\}} \bs_{cK}(\tau)^{1/c} \,\td \tau}{\int_0^{r/b} \bs_{cK}(\tau)^{1/c} \,\td \tau}
 \int_{U_xM} \int_0^{\min\{r,\, l_v\}} h(t)^{1/c} \,\td t \,\Xi(\td v) \\
&= \frac{b}{a}
 \frac{\int_0^{\min\{R/a,\,\pi/\sqrt{cK}\}} \bs_{cK}(\tau)^{1/c} \,\td \tau}{\int_0^{r/b} \bs_{cK}(\tau)^{1/c} \,\td \tau}
 \fm\big( B^+(x,r) \big)
\end{align*}
(notice that $r/b <\pi/\sqrt{cK}$ if $K>0$ by hypothesis).
This completes the proof.
$\qedd$
\end{proof}

This volume comparison theorem could be compared with \cite[Theorem~1.2]{WW}
on Riemannian manifolds $(M,g,\fm)$ with $\Ric_{\infty} \ge K$ and $|\psi_{\fm}| \le k$.
See also \cite[Theorem~4.5]{WY} and \cite[Theorem~2.10]{KL}
in terms of the deformed distance structure that we briefly discussed in Remark~\ref{rm:b}.

%%%%%%%%%%%%%%%%%%%%%%%%%
\section{Finsler spacetimes}\label{sc:LFmfd}%%%%%%%%%%%%%
%%%%%%%%%%%%%%%%%%%%%%%%%

From here on we switch to the Lorentzian setting.
We refer to \cite{BEE,Min-Rev,ON} for the basics of Lorentzian geometry,
and to \cite{Min-spray,Min-causality} for further generalizations including Lorentz--Finsler manifolds
(see also Remark~\ref{rm:added} below).
In this and the next sections,
let $M$ be a connected $C^{\infty}$-manifold without boundary of dimension $n+1$.
We remark that $\dim M=n$ in the preceding sections, however,
it is standard in Lorentzian geometry to let $\dim M=n+1$,
we hope that this difference causes no confusion.
We will use indices in Greek: $\alpha,\beta=0,1,\ldots,n$.

\subsection{Lorentz--Finsler manifolds}\label{ssc:LFmfd}%%%%%%%%%%%%%
%%%%%%%%%%%%%%%%%%%%%%%%%

Similarly to the preceding sections (and \cite{LMO}),
given local coordinates $(x^\alpha)_{\alpha=0}^n$ on an open set $U\subset M$,
we will use the coordinates
\[ v =\sum_{\beta=0}^n v^\beta \frac{\partial}{\partial x^\beta} \Big|_x, \qquad x \in U. \]
We follow Beem's definition \cite{Be} of a Finsler version of Lorentzian manifolds.

\begin{definition}[Lorentz--Finsler structures]\label{df:LFstr}
A \emph{Lorentz--Finsler structure} of $M$ will be a function
$L:TM \lra \R$ satisfying the following conditions:
\begin{enumerate}[(1)]
\item $L \in C^{\infty}(TM \setminus 0)$;
\item $L(cv)=c^2 L(v)$ for all $v \in TM$ and $c>0$;
\item For any $v \in TM \setminus 0$, the symmetric matrix
%\begin{equation}\label{eq:g_ij}
\[ \big( g_{\alpha \beta}(v) \big)_{\alpha,\beta=0}^n
 :=\bigg( \frac{\del^2 L}{\del v^\alpha \del v^\beta}(v) \bigg)_{\alpha,\beta=0}^n \]
%\end{equation}
is non-degenerate with signature $(-,+,\ldots,+)$.
\end{enumerate}
A pair $(M,L)$ is then called a ($C^{\infty}$-)\emph{Lorentz--Finsler manifold}.
\end{definition}

We say that $(M,L)$ is \emph{reversible} if $L(-v)=L(v)$ for all $v\in TM$.
For $v \in T_xM \setminus \{0\}$, define the Lorentzian metric $g_v$ of $T_xM$
in the same manner as \eqref{eq:gv} by
%\begin{equation}\label{eq:g_v}
\[ g_v \Bigg( \sum_{\alpha=0}^n a_\alpha \frac{\del}{\del x^\alpha}\Big|_x,
 \sum_{\beta=0}^n b_\beta \frac{\del}{\del x^\beta}\Big|_x \Bigg)
 :=\sum_{\alpha,\beta=0}^n g_{\alpha \beta}(v) a_\alpha b_\beta. \]
%\end{equation}
Then we have $g_v(v,v)=2L(v)$.

\begin{definition}[Timelike vectors]\label{df:time}
A tangent vector $v \in TM $ is said to be \emph{timelike} (resp.\ \emph{null})
if $L(v)<0$ (resp.\ $L(v)=0$).
We say that $v$ is \emph{lightlike} if it is null and nonzero,
and \emph{causal} (or \emph{non-spacelike}) if it is timelike or lightlike ($L(v) \le 0$ and $v \neq 0$).
The \emph{spacelike vectors} are those for which $L(v)>0$ or $v=0$.
The set of timelike vectors will be denoted by
\[ \Omega'_x:=\{ v \in T_xM \,|\, L(v)<0 \}, \qquad \Omega' :=\bigcup_{x \in M} \Omega'_x. \]
\end{definition}

We will make use of the following function on $\Omega'$:
\begin{equation}\label{eq:LtoF}
F(v) :=\sqrt{-2L(v)} =\sqrt{-g_v(v,v)}.
\end{equation}
Note that $\Omega'_x \neq \emptyset$ and every connected component of $\Omega'_x$
is a convex cone (\cite{Be}, \cite[Lemma~2.3]{LMO}).
In general, the number of connected components of $\Omega_x'$ may be larger than 2
(see Example~\ref{ex:Beem}(b) below from \cite{Be}).
This fact will not affect our discussion because we shall deal with
only future-directed (timelike or causal) vectors; see Definition~\ref{df:spacetime} below.
We also remark that $\Omega'_x$ has exactly two connected components
in reversible Lorentz--Finsler manifolds of dimension $\ge 3$ (\cite[Theorem~7]{Min-cone}).

\begin{remark}\label{rm:added}
We comment on the differences in approach between
our Lorentz--Finsler setting and that adopted in some physical works.
Finslerian approaches to gravity have a venerable history,
one of the first formulations goes back to Horv\'ath \cite{horvath50} in the 1950s.
Since then many different Finslerian gravitational equations have appeared in the physical literature.
Due to the lack of exact solutions,
particularly of Finslerian generalizations of the Schwarzschild metric,
and of their confrontation with experiment,
a consensus on the correct Finslerian gravitational equation has not yet been reached.
Most equations (including Horv\'ath's) imply Ricci flatness in vacuum,
so this condition is often regarded as a minimal requirement.

Many authors worked via tensorial equations and
paid little attention on the constraints imposed by the Lorentzian signature
of the vertical Hessian of the Finsler Lagrangian.
Precisely, in physical papers, a direct product metric of the form
\[
2L\bigg( a\frac{\del}{\del t}+v \bigg) =-a^2 +F^2(v), \quad
 \bigg( a\frac{\del}{\del t},v \bigg) \in T\R \times T\Sigma,
\]
would be imposed as ansatz (see, e.g., \cite[(8)]{lammerzahl12}, \cite[(27)]{li14}, \cite[(34)]{rahaman15}).
%and then some gravitational equations would be imposed.
These metrics are not of Lorentz--Finsler type according to our definition,
since the vertical Hessian at the observer (timelike vector) $\partial/\partial t$ is not well defined
($F^2$ is not twice differentiable at the origin; see, e.g., \cite[\S 1.2.2]{Obook}).
Less severe regularity problems are shared by those Lagrangians
that have no vertical Hessian at the light cone. %(the locus $L=0$).
This happens, for instance, to all the metrics that follow from the Bogoslovsky metric element
(very special relativity) \cite{fuster16,silagadze11}.
In these metrics lightlike particles might have infinite momenta,
making these models not as physically natural as one would desire.
The metrics of Lorentz--Randers type \cite{basilakos13,storer00,triantafyllopoulos20}
have a vertical Hessian that is also not Lorentzian and $C^2$ at the boundary of the light cone,
though the Finsler Lagrangian can be $C^1$.
Unfortunately, in these models the momenta of lightlike particles might vanish.

When it comes to work in Lorentz--Finsler geometry,
the ansatzes tried by physicists have the advantage of being simple,
and of making the calculations somewhat easier, but being non-$C^2$ at the light cone,
often produce, as shown previously, metrics that have undesired physical features.
From the point of view of pure mathematics,
it is not easy to work with these less regular models.
Nonetheless, some of our results could hold relaxing the $C^2$-regularity assumption at the light cone,
e.g., by imposing the $C^2$-Lorentzian condition only in the timelike cone.
This would raise issues related to the physical interpretation of light.
%Still, there is some hope.
It has been proved in \cite[Section~4]{minguzzi15e} (and again in \cite[Theorem~6.6]{javaloyes18})
that lightlike geodesics and transport of momenta over them do not require
the vertical $C^2$-differentiability of the Finsler Lagrangian at the light cone,
and that these concepts follow just from the distribution of (anisotropic) cones,
not from the Finsler Lagrangian itself.

The problem of allowing for Lorentz--Finsler Langrangians with non-$C^2$ vertical behavior
at the light cone is somewhat analogous, in the positive signature,
to that of studying Finsler metrics $F$ for which the Hessian of $L=F^2/2$ is not well defined
or positive-definite in some directions (e.g., Kropina metrics).
Here one faces some annoying problems, for instance,
the Hopf--Rinow theorem does not hold \cite{sabau17}.
Our comparison results could be generalized to these frameworks,
as many of our proofs present arguments that seem localized over the indicatrix.
However, the impossibility of appealing to the Hopf--Rinow theorem
would certainly make such an investigation somewhat involved.
We do not attempt to generalize our results to that extent,
and keep the simpler framework of $C^2$-differentibility on the slit-tangent bundle.
This seems to be the right approach as our focus is on the role of the $\epsilon$-range concept
in comparison theorems rather than generality.
%, rather than in potential physical applications.
\end{remark}

\subsection{Causality theory}\label{ssc:causal}%%%%%%%%%%%%%
%%%%%%%%%%%%%%%%%%%%%%%%%

Let $(M,L)$ be a Lorentz--Finsler manifold.

\begin{definition}[Finsler spacetimes]\label{df:spacetime}
If $(M,L)$ admits a timelike smooth vector field $X$ (namely $L(X(x))<0$ for all $x \in M$),
then $(M,L)$ is said to be \emph{time oriented} (by $X$).
A time oriented Lorentz--Finsler manifold will be called a \emph{Finsler spacetime}.
\end{definition}

In a Finsler spacetime oriented by $X$,
a causal vector $v \in T_xM$ is said to be \emph{future-directed}
if it lies in the same connected component of $\overline{\Omega'}\!_x \setminus \{0\}$ as $X(x)$.
We will denote by $\Omega_x \subset \Omega'_x$ the set of future-directed timelike vectors,
and define
\[ \Omega :=\bigcup_{x \in M} \Omega_x, \qquad
 \overline{\Omega} :=\bigcup_{x \in M} \overline{\Omega}_x, \qquad
 \overline{\Omega} \setminus 0 :=\bigcup_{x \in M} (\overline{\Omega}_x \setminus \{0\}). \]
A $C^1$-curve in $(M,L)$ is said to be \emph{timelike} (resp.\ \emph{causal})
if its tangent vector is always timelike (resp.\ causal).
All causal curves will be future-directed.

Given distinct points $x,y \in M$, we write $x \ll y$ (resp.\ $x<y$)
if there is a future-directed timelike (resp.\ causal) curve from $x$ to $y$,
and $x \le y$ means that $x=y$ or $x<y$.
Then we define the \emph{chronological past} and \emph{future} of $x$ by
\[ I^-(x):=\{y \in M \,|\, y \ll x\}, \qquad I^+(x):=\{y \in M \,|\, x \ll y\}, \]
and the \emph{causal past} and \emph{future} by
\[ J^-(x):=\{y \in M \,|\, y \le x\}, \qquad J^+(x):=\{y \in M \,|\, x \le y\}. \]
For a set $S\subset M$, we define $I^-(S),I^+(S),J^-(S)$ and $J^+(S)$ analogously.
Let us recall several causality conditions.

\begin{definition}[Causality conditions]\label{df:causal}
Let $(M,L)$ be a Finsler spacetime.
\begin{enumerate}[(1)]
\item $(M,L)$ is said to be \emph{chronological} if $x \notin I^+(x)$ for all $x\in M$.

\item We say that $(M,L)$ is \emph{causal} if there is no closed causal curve.

\item $(M,L)$ is said to be \emph{strongly causal} if, for all $x \in M$,
every neighborhood $U$ of $x$ contains another neighborhood $V$ of $x$
such that no causal curve intersects $V$ more than once.

\item We say that $(M,L)$ is \emph{globally hyperbolic}
if it is strongly causal and, for any $x,y \in M$, $J^+(x) \cap J^-(y)$ is compact (or empty).
\end{enumerate}
\end{definition}

It is straightforward that strong causality implies causality, and a causal spacetime is chronological.
A chronological spacetime is necessarily noncompact.

\subsection{Covariant derivative and Ricci curvature}\label{ssc:LFcurv}%%%%%%%%%%%%%
%%%%%%%%%%%%%%%%%%%%%%%%%

One can introduce the covariant derivative and Ricci curvature
in the same way as in the positive-definite case.
We shall use the same notations as in Section~\ref{sc:Fmfd} and \cite{LMO}.

Similarly to Subsection~\ref{ssc:Fmfd}, we define
\[ \gamma^{\alpha}_{\beta \delta} (v)
 :=\frac{1}{2} \sum_{\lambda=0}^n g^{\alpha \lambda}(v)
 \bigg\{ \frac{\del g_{\lambda \delta}}{\del x^{\beta}}(v) +\frac{\del g_{\beta \lambda}}{\del x^{\delta}}(v)
 -\frac{\del g_{\beta \delta}}{\del x^{\lambda}}(v) \bigg\} \]
for $\alpha,\beta,\delta =0,1,\ldots,n$ and $v \in TM\setminus 0$,
where $(g^{\alpha \beta}(v))$ is the inverse matrix of $(g_{\alpha\beta}(v))$,
\[ G^{\alpha}(v) :=\frac{1}{2}\sum_{\beta,\delta=0}^n \gamma^{\alpha}_{\beta \delta}(v) v^{\beta} v^{\delta},
 \qquad N^{\alpha}_{\beta}(v) :=\frac{\del G^{\alpha}}{\del v^{\beta}}(v) \]
for $v \in TM \setminus 0$ ($G^{\alpha}(0)=N^{\alpha}_{\beta}(0):=0$ by convention), and
\[ \Gamma^{\alpha}_{\beta \delta}(v):=\gamma^{\alpha}_{\beta \delta}(v)
 -\frac{1}{2}\sum_{\lambda,\mu=0}^n g^{\alpha \lambda}(v)
 \bigg( \frac{\del g_{\lambda \delta}}{\del v^{\mu}}N^{\mu}_{\beta}
 +\frac{\del g_{\beta \lambda}}{\del v^{\mu}} N^{\mu}_{\delta}
 -\frac{\del g_{\beta \delta}}{\del v^{\mu}} N^{\mu}_{\lambda} \bigg)(v) \]
on $TM \setminus 0$.
Then the covariant derivative is defined in the same way as in Definition~\ref{df:covd},
\[ D_v^w X(x) :=\sum_{\alpha,\beta=0}^n
 \bigg\{ v^{\beta} \frac{\del X^{\alpha}}{\del x^{\beta}}(x)
 +\sum_{\delta=0}^n \Gamma^{\alpha}_{\beta \delta}(w) v^{\beta} X^{\delta}(x) \bigg\}
 \frac{\del}{\del x^{\alpha}} \Big|_x \in T_xM, \]
for a vector field $X$, $v \in T_xM$ and reference vector $w \in T_xM \setminus \{0\}$.

The geodesic equation for a causal curve $\eta:[0,1] \lra M$
is written as $D^{\dot{\eta}}_{\dot{\eta}}\dot{\eta} \equiv 0$ (recall \eqref{eq:geod}).
This is understood as the Euler--Lagrange equation associated with the action
\[ \mathcal{S}(\eta) :=\int_0^1 L\big( \dot{\eta}(t) \big) \,\td t. \]
The Lagrangian $L$ is preserved over a geodesic,
a fact which proves that the causal character of a geodesic is preserved,
hence we can speak of timelike and causal geodesics.

We also define the \emph{Lorentz--Finsler distance} $d(x,y)$ for $x,y \in M$ by
\[ d(x,y) :=\sup_{\eta} \int_0^1 F\big( \dot{\eta}(t) \big) \,\td t, \]
where $\eta:[0,1] \lra M$ runs over all causal curves from $x$ to $y$
(recall \eqref{eq:LtoF} for the definition of $F$).
We set $d(x,y):=0$ if there is no causal curve from $x$ to $y$ (namely $x \not< y$).
A constant speed causal curve attaining the above supremum, which is a causal geodesic,
is said to be \emph{maximal}.
In general, causal geodesics are locally maximizing much in the same way
as geodesics are locally minimizing in Riemannian geometry (\cite[Theorem~6]{Min-spray}).
The distance function $d$ is well-behaved in globally hyperbolic spacetimes as follows.

\begin{theorem}\label{th:connect}
If $(M,L)$ is globally hyperbolic, then the distance function $d$ is finite and continuous,
and any pair of points $x,y \in M$ with $x<y$ is connected by a maximal geodesic.
\end{theorem}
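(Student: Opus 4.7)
The plan is to mirror the classical Avez--Seifert argument, adapted to the Lorentz--Finsler setting. Fix $x,y \in M$ with $x<y$ and set $K:=J^+(x) \cap J^-(y)$, which is compact (possibly empty only when $x \not< y$) by global hyperbolicity. Every future-directed causal curve from $x$ to $y$ is contained in $K$. Choose an auxiliary complete Riemannian metric $h$ on $M$ (available because $M$ is a smooth manifold) and parametrize all candidate causal curves by $h$-arc length. On the compact set $K$, the $h$-lengths of causal curves from $x$ to $y$ are uniformly bounded: indeed, causal directions form a closed cone field, and by continuity of $L$ and compactness, one obtains a constant $C_K$ with $F(v) \le C_K\,\sqrt{h(v,v)}$ for every future-directed causal $v$ based in $K$, while the Lorentz--Finsler length is clearly bounded above by $C_K$ times the $h$-length, which in turn is bounded above by, say, twice the $h$-diameter of $K$ along with a standard limit-curve compactness argument.

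First I would establish finiteness and the existence of a maximizer together. Pick a maximizing sequence $\eta_k:[0,1] \lra M$ of causal curves from $x$ to $y$ with $\int_0^1 F(\dot\eta_k)\,\td t \to \sup$. After reparametrizing each $\eta_k$ by $h$-arc length on $[0,\ell_k]$, the family is equi-Lipschitz with respect to $h$, with $\ell_k$ bounded, so Arzel\`a--Ascoli yields (along a subsequence, after rescaling to a common interval) uniform convergence to a Lipschitz curve $\eta_\infty$ in $K$. The limit-curve theorem in causality theory (which carries over verbatim to Lorentz--Finsler manifolds, the closedness of the causal cone field being the only ingredient) shows that $\eta_\infty$ is again future-directed causal from $x$ to $y$. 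The key analytic input is then upper semicontinuity of the Lorentz--Finsler length under such $C^0$-convergence inside a compact set; granting this,
\[
d(x,y) \ge \int F(\dot\eta_\infty)\,\td t \ge \limsup_{k \to \infty} \int F(\dot\eta_k)\,\td t = d(x,y),
\]
so $d(x,y)$ is finite and is attained by $\eta_\infty$.

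Next, I would argue that any length-maximizing causal curve is, up to reparametrization, a causal geodesic. On any sufficiently small piece of $\eta_\infty$, local maximality forces $\eta_\infty$ to coincide with the unique locally maximal causal geodesic joining its endpoints, which is guaranteed by the Lorentz--Finsler version of local maximality of causal geodesics (\cite[Theorem~6]{Min-spray}). After reparametrization to constant $F$-speed this gives the required maximal geodesic $\eta$ solving $D^{\dot\eta}_{\dot\eta}\dot\eta=0$.

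Finally, for continuity of $d$, upper semicontinuity follows directly from the limit-curve argument above applied to sequences $x_k \to x$, $y_k \to y$ and corresponding maximizers $\eta_k$, which accumulate on a causal curve from $x$ to $y$ of length $\ge \limsup d(x_k,y_k)$. Lower semicontinuity is obtained by the standard construction: given a maximal geodesic from $x$ to $y$, perturb its endpoints via short timelike curves to $x_k$ and $y_k$, producing causal curves whose lengths approach $d(x,y)$ (timelike pieces contributing to $d(x_k,x)+d(y,y_k) \to 0$ as $k \to \infty$, exploiting openness of the chronological relation $\ll$). Combining both bounds yields continuity of $d$ on $M \times M$.

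The main obstacle I expect is the upper semicontinuity of the Lorentz--Finsler length under $C^0$-uniform limits of causal curves. In the Lorentzian case this is a delicate property because of the lack of a positive-definite norm; the standard proof uses convex normal neighborhoods and comparison with a local Lorentzian metric. In our Finsler setting one must use the conic structure of $\overline\Omega$ and the $C^2$-regularity of $L$ on $TM\setminus 0$, together with the fact that $F=\sqrt{-2L}$ is continuous on $\overline\Omega\setminus 0$, to carry out the same reduction to a local comparison. Once upper semicontinuity is in hand, the remaining steps are routine adaptations of the Riemannian/Lorentzian templates.
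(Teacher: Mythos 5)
Your plan is the standard Avez--Seifert route adapted to the cone-structure setting, which is what the paper is implicitly invoking: the paper gives no self-contained argument here, it simply cites \cite[Propositions~6.7--6.9]{Min-Ray}, and those are established along the lines you sketch (auxiliary Riemannian metric, limit-curve machinery, upper semicontinuity of the length functional, and local maximality of causal geodesics from \cite{Min-spray}).

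There is, however, a genuine gap in the step bounding the $h$-arclengths of the competitor curves. You assert that the $h$-length of an arbitrary causal curve from $x$ to $y$, confined to $K=J^+(x)\cap J^-(y)$, is ``bounded above by, say, twice the $h$-diameter of $K$.'' This is false: the causal cone constraint controls the pointwise direction of the tangent vector, not the total arclength, and confinement to a compact set does not by itself prevent a causal curve from accumulating arbitrarily large $h$-length. The uniform bound is a consequence of \emph{strong causality}, which global hyperbolicity provides: cover $K$ by finitely many causally convex neighborhoods of uniformly small $h$-diameter; strong causality guarantees that a causal curve meets each such neighborhood in at most one connected segment, and this bounds the total $h$-length by the sum of the diameters of the finitely many sets. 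Without this ingredient, the intervals $[0,\ell_k]$ on which you $h$-arclength-parametrize the maximizing sequence may fail to be uniformly bounded, Arzel\`a--Ascoli does not apply, and even the \emph{finiteness} of $d(x,y)$ is not established. Note also that this bound is a prerequisite for, not a consequence of, the limit-curve argument you gesture at at the end of that sentence. The remaining steps --- upper semicontinuity of the Lorentz--Finsler length along $C^0$-limits inside compacta, which you rightly flag as the substantive analytic input and which does carry over using convex neighborhoods from \cite{Min-spray}, the identification of the limit curve as a causal geodesic via local maximality, and the semicontinuity arguments for continuity of $d$ --- are the correct route.
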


See \cite[Proposition~6.8]{Min-Ray} for the former claim.
The latter is the Finsler analogue of the \emph{Avez--Seifert theorem}
and found in \cite[Proposition~6.9]{Min-Ray}.
In general, $d$ is only lower semi-continuous (\cite[Proposition~6.7]{Min-Ray}) and can be infinite.

Next we introduce the Ricci curvature.
First of all, a $C^{\infty}$-vector field $J$ along a geodesic $\eta$ is called a \emph{Jacobi field}
if it is a solution to the equation
\[ D^{\dot{\eta}}_{\dot{\eta}} D^{\dot{\eta}}_{\dot{\eta}} J +R_{\dot{\eta}}(J) =0, \]
where
\[ R_v(w):=\sum_{\alpha,\beta=0}^n
 R^{\alpha}_{\beta}(v) w^{\beta} \frac{\del}{\del x^{\alpha}}\Big|_x \]
for $v,w \in T_xM$ and
\[ R^{\alpha}_{\beta}(v) :=2\frac{\del G^{\alpha}}{\del x^{\beta}}(v)
 -\sum_{\delta=0}^n \bigg\{ \frac{\del N^{\alpha}_{\beta}}{\del x^{\delta}}(v) v^{\delta}
 -2\frac{\del N^{\alpha}_{\beta}}{\del v^{\delta}}(v) G^{\delta}(v) \bigg\}
 -\sum_{\delta=0}^n N^{\alpha}_{\delta}(v) N^{\delta}_{\beta}(v) \]
is the \emph{curvature tensor}.
Similarly to Subsection~\ref{ssc:Fcurv}, a Jacobi field is also characterized as the variational vector field
of a geodesic variation.
Note that $R_v(w)$ is positively $2$-homogeneous in $v$ and linear in $w$.

\begin{definition}[Ricci curvature]\label{df:LF-curv}
For $v \in \overline{\Omega}_x$, we define the \emph{Ricci curvature}
(or \emph{Ricci scalar}) of $v$ as the trace of $R_v$: $\Ric(v):=\trace(R_v)$.
\end{definition}

We have $\Ric(cv)=c^2 \Ric(v)$ for $c>0$.
If $v$ is timelike, then one can also define the \emph{flag curvature}
\[ \bK(v,w) :=-\frac{g_v(R_v(w),w)}{g_v(v,v) g_v(w,w) -g_v(v,w)^2} \]
for $w \in T_xM$ linearly independent of $v$ (this is the opposite sign to \cite{BEE}),
and we have
\[ \Ric(v) =F^2(v) \sum_{i=1}^n \bK(v,e_i), \]
where $\{v/F(v)\} \cup \{e_i\}_{i=1}^n$ is an orthonormal basis of $(T_xM,g_v)$
(i.e., $g_v(e_i,e_j)=\delta_{ij}$ and $g_v(v,e_i)=0$ for all $i,j=1,2,\ldots,n$).
The Riemannian characterization of the Ricci (and flag) curvature
in the sense of Remark~\ref{rm:curv} is available also in this setting (see \cite[Theorem~3.7]{LMO}).

We summarize some basic properties of the curvature tensor
(see \cite[Proposition~2.4]{Min-Ray}).

\begin{lemma}\label{lm:LF-curv}
\begin{enumerate}[{\rm (i)}]
\item We have $R_v(v)=0$ for all $v \in \overline{\Omega}_x$.

\item  $g_v(v,R_v(w))=0$ for all $v \in \overline{\Omega}_x \setminus \{0\}$
and $w \in T_xM$.

\item $R_v$ is symmetric in the sense that $g_v(R_v(w_1),w_2)=g_v(w_1,R_v(w_2))$
for all $v \in \overline{\Omega}_x \setminus \{0\}$ and $w_1,\,w_2\in T_xM$.
\end{enumerate}
\end{lemma}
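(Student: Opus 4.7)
The plan is to handle the three assertions in the order (i), (iii), (ii), since (ii) will fall out of the other two. For (i), the clean route is variational: the one-parameter family of reparametrizations $\eta_s(t) := \eta(t+s)$ of the geodesic $\eta$ with $\dot{\eta}(0) = v$ is a geodesic variation whose variational vector field along $\eta$ is $J(t) = \dot{\eta}(t)$. Hence $J$ is a Jacobi field. Since $D^{\dot{\eta}}_{\dot{\eta}} J = D^{\dot{\eta}}_{\dot{\eta}}\dot{\eta} \equiv 0$ by the geodesic equation, the Jacobi equation collapses to $R_{\dot{\eta}}(\dot{\eta}) = 0$, which at $t = 0$ yields $R_v(v) = 0$. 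The case $v = 0$ is immediate from the positive $2$-homogeneity of $R^{\alpha}_{\beta}$ in $v$. As a sanity check, one can verify the identity directly from the formula defining $R^{\alpha}_{\beta}$: using the positive $2$-homogeneity of $G^{\alpha}$, the Euler identity $N^{\alpha}_{\beta}(v) v^{\beta} = 2 G^{\alpha}(v)$, and the elementary fact $\partial_{x^{\delta}}(N^{\alpha}_{\beta} v^{\beta}) = 2 \partial_{x^{\delta}} G^{\alpha}$, every term in $R^{\alpha}_{\beta}(v) v^{\beta}$ cancels in pairs.

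For (iii), the natural strategy is to reduce the symmetry of $R_v$ to the symmetry of the Chern curvature with reference vector $v$. Concretely, defining $R_{\alpha\beta}(v) := \sum_{\gamma} g_{\alpha\gamma}(v) R^{\gamma}_{\beta}(v)$, one expresses $R_{\alpha\beta}(v)$ as a double contraction of the Chern curvature tensor with $v$, and then invokes the pair-symmetry of that tensor (which in turn is a consequence of the almost-$g$-compatibility and torsion-freeness properties of the Chern connection). The key point that one must verify is that the Cartan-tensor corrections in $\Gamma^{\alpha}_{\beta\delta}(v)$, which a priori seem to obstruct the symmetry, are killed upon contraction with $v$ thanks to \eqref{eq:A}. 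A more conceptual alternative is to differentiate the action $\mathcal{S}(\eta) = \int L(\dot{\eta}) \,\td t$ twice along a two-parameter geodesic variation and read off the symmetric index form on Jacobi fields; integration by parts then encodes the symmetry of $R_v$ precisely.

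With (i) and (iii) in hand, (ii) is formal:
\[
g_v\big( v, R_v(w) \big) \;=\; g_v\big( R_v(v), w \big) \;=\; 0,
\]
using (iii) in the first step and (i) in the second. The main obstacle is clearly (iii). In the Riemannian or Lorentzian setting this symmetry is folded into the standard identities of the Riemann tensor; in the genuinely Finslerian setting the Cartan tensor intervenes, so one must carefully verify that the relevant non-symmetric pieces are annihilated by the contraction with the reference vector $v$ built into the definition of $R_v$. This is the only step that requires more than bookkeeping, and it is where we would mirror the argument in \cite[Proposition~2.4]{Min-Ray}.
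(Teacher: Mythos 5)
The paper does not actually prove this lemma --- it only cites \cite[Proposition~2.4]{Min-Ray} --- so there is no author proof to compare against; you are supplying an argument where the paper simply defers to the reference.

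Your argument for (i) is correct and complete: $J(t)=\dot\eta(t)$ is the variational vector field of the reparametrizing geodesic variation $\zeta(t,s)=\eta(t+s)$, hence a Jacobi field, and the geodesic equation $D^{\dot\eta}_{\dot\eta}\dot\eta=0$ then collapses the Jacobi equation to $R_{\dot\eta}(\dot\eta)=0$; your direct computation check also goes through (differentiating $\sum_\beta N^{\alpha}_{\beta}v^{\beta}=2G^{\alpha}$ in $x^{\delta}$ and in $v^{\delta}$ kills each pair of terms). The deduction of (ii) from (iii) and (i) is formal and correct. For (iii) your outline is on the right track, but one phrase is misleading: the Chern $hh$-curvature does \emph{not} enjoy the full pair-symmetry of the Riemann tensor, so one cannot simply ``invoke'' it. What does hold, and is what \cite[Proposition~2.4]{Min-Ray} establishes, is the weaker statement that the double contraction $\sum_{\gamma,\delta} R_{\gamma\alpha\beta\delta}(v)\,v^{\gamma} v^{\delta}$ is symmetric in $\alpha,\beta$, precisely because the Cartan/Landsberg obstructions to pair-symmetry vanish upon contraction with $v$ (the point you do flag). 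Your second-variation alternative (the index form is symmetric because it is a Hessian, hence $R_{\dot\eta}$ is $g_{\dot\eta}$-self-adjoint by localizing the variation fields) is a legitimate and conceptually clean independent route, but deriving the second variation formula in the Lorentz--Finsler setting involves the same bookkeeping with reference vectors, so it is not obviously lighter. Since the paper itself defers the proof to the reference, your deferral on this one step is acceptable, though it should be stated more carefully to avoid suggesting the Chern curvature has symmetries it does not possess.
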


\subsection{Polar cones and Legendre transform}\label{ssc:dual}%%%%%%%%%%%
%%%%%%%%%%%%%%%%

In order to introduce the spacetime Laplacian (d'Alembertian),
we consider the dual structure to $L$ and the Legendre transform
(see \cite{Min-cone}, \cite[\S 3.1]{Min-causality} for further discussions).
Let $(M,L)$ be a Finsler spacetime.
Define the \emph{polar cone} to $\Omega_x$ by
%\begin{equation}\label{eq:dualC}
\[ \Omega^*_x :=\big\{ \omega \in T_x^*M \,|\,
 \omega(v)<0\ \text{for all}\ v \in \overline{\Omega}_x \setminus \{0\} \big\}. \]
%\end{equation}
This is an open convex cone in $T_x^*M$.
For $\omega \in \Omega^*_x$, we define
%\begin{equation}\label{eq:dualL}
\[ L^*(\omega) := -\frac{1}{2}\Big( \sup_{v \in \Omega_x \cap F^{-1}(1)} \omega(v) \Big)^2
 =-\frac{1}{2}\inf_{v \in \Omega_x \cap F^{-1}(1)} \big( \omega(v) \big)^2. \]
%\end{equation}
By definition, for any $v \in \Omega_x$ and $\omega \in \Omega^*_x$, we have
\[ L^*(\omega) \ge -\frac{1}{2}\bigg( \omega\bigg( \frac{v}{F(v)} \bigg) \bigg)^2
 =\frac{(\omega(v))^2}{4L(v)}. \]
This implies, since $L(v)<0$, the \emph{reverse Cauchy--Schwarz inequality}
%\begin{equation}\label{eq:LL}
\[ L^*(\omega) L(v) \le \frac{1}{4} \big( \omega(v) \big)^2 \]
%\end{equation}
(see also \cite[Theorem~3]{Min-cone}, \cite[Proposition~3.2]{Min-causality}).
Then we arrive at the following variational definition of the Legendre transform.

\begin{definition}[Legendre transform]\label{df:Leg}
Define the \emph{Legendre transform}
$\LL^*:\Omega^*_x \lra \Omega_x$ as the map sending $\omega \in \Omega^*_x$
to the unique element $v \in \Omega_x$ satisfying $L(v)=L^*(\omega)=\omega(v)/2$.
We also define $\LL^*(0):=0$.
\end{definition}

Note that the uniqueness of $v=\LL^*(\omega)$ follows from the strict convexity
of the super-level sets of $F$ in $\Omega_x$.
One can define $\LL:\Omega_x \lra \Omega^*_x$ in the same manner,
and then $\LL=(\LL^*)^{-1}$ holds by construction.
In order to write down $\LL^*$ and $\LL$ in coordinates, we introduce
\[ g^*_{\alpha \beta}(\omega)
 := \frac{\del^2 L^*}{\del \omega_{\alpha} \del \omega_{\beta}}(\omega) \]
for $\omega \in T^*M \setminus 0$.

\begin{lemma}[Coordinate expressions]\label{lm:Leg}
For $v \in \Omega_x$ and $\omega \in \Omega^*_x$,
we have in local coordinates around $x$
\begin{align*}
\LL(v) &= \sum_{\alpha=0}^n \frac{\del L}{\del v^{\alpha}}(v) \,\td x^{\alpha}
 =\sum_{\alpha,\beta=0}^n g_{\alpha \beta}(v) v^{\beta} \,\td x^{\alpha}, \\
\LL^*(\omega)
&= \sum_{\alpha=0}^n
 \frac{\del L^*}{\del \omega_{\alpha}}(\omega) \frac{\del}{\del x^{\alpha}}\Big|_x
 =\sum_{\alpha,\beta=0}^n
 g^*_{\alpha \beta}(\omega) \omega_{\beta} \frac{\del}{\del x^{\alpha}}\Big|_x.
\end{align*}
\end{lemma}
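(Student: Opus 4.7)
The plan is to reduce each formula to a combination of Euler's homogeneous function theorem and a Lagrange multiplier argument on the variational definition of $L^*$. First, the second equality in each line is purely homogeneity. Since $L(cv)=c^2 L(v)$, the derivatives $\partial L/\partial v^\alpha$ are positively $1$-homogeneous in $v$, so applying Euler to them gives $\sum_\beta g_{\alpha\beta}(v) v^\beta=(\partial L/\partial v^\alpha)(v)$. On the dual side one first checks that $\LL^*$ is $1$-homogeneous: if $\LL^*(\omega)=v$, the defining relations $L(v)=L^*(\omega)=\omega(v)/2$ combined with $L(cv)=c^2L(v)$ force $\LL^*(c\omega)=c\LL^*(\omega)$ for $c>0$. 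Consequently $L^*$ is positively $2$-homogeneous and the same Euler argument yields $\sum_\beta g^*_{\alpha\beta}(\omega)\omega_\beta=(\partial L^*/\partial \omega_\alpha)(\omega)$.

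For the first equality on the dual side, I would apply Lagrange multipliers to the variational definition of $L^*$. Maximizing $u\mapsto \omega(u)$ over the smooth hypersurface $\{L=-1/2\}\cap \Omega_x$ yields a unique maximizer $u^*(\omega)$ (uniqueness by the strict convexity of the super-level sets of $F$ in $\Omega_x$, itself a consequence of the Lorentzian signature of $(g_{\alpha\beta}(v))$ on timelike directions), depending smoothly on $\omega$. The Lagrange condition reads $\omega_\alpha=\mu\,(\partial L/\partial v^\alpha)(u^*)$ for some $\mu\in\R$; contracting with $u^*$ and using Euler gives $\omega(u^*)=2\mu L(u^*)=-\mu$, so $\mu=-\omega(u^*)>0$. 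A direct check then shows that $\mu u^*$ satisfies $L(\mu u^*)=-\mu^2/2=L^*(\omega)$ and $\omega(\mu u^*)=2L^*(\omega)$, so by uniqueness in Definition~\ref{df:Leg} one has $\LL^*(\omega)=-\omega(u^*)u^*$. The envelope theorem applied to $L^*(\omega)=-\omega(u^*(\omega))^2/2$ now delivers $(\partial L^*/\partial \omega_\alpha)(\omega)=-\omega(u^*)u^{*\alpha}=(\LL^*(\omega))^\alpha$, which is the first formula.

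The identity $\LL(v)=\sum_\alpha(\partial L/\partial v^\alpha)(v)\,\td x^\alpha$ then follows by running this argument backwards. Given $v\in\Omega_x$, set $\omega_\alpha:=(\partial L/\partial v^\alpha)(v)$ and $u^*:=v/F(v)$; the $1$-homogeneity of $\partial L/\partial v^\alpha$ together with Euler on $L$ shows that $(u^*,\omega)$ solves the Lagrange system above with $\mu=F(v)$, and the computation of the previous paragraph forces $\LL^*(\omega)=F(v)u^*=v$, whence $\LL(v)=\omega$ by the uniqueness clause in Definition~\ref{df:Leg}. The main technical points are verifying that this $\omega$ actually lies in $\Omega^*_x$ so the Legendre framework applies, which I would handle via the reverse Cauchy--Schwarz inequality $L^*(\omega)L(v)\le (\omega(v))^2/4$ established just before the lemma together with $L(v)<0$, and confirming that the constrained critical point $u^*$ is non-degenerate so the envelope theorem is valid; both are immediate from the Lorentzian signature of $(g_{\alpha\beta}(v))$ on $\Omega_x$.
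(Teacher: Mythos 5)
Your proof is correct in its essentials but takes a genuinely different route from the paper's. The paper fixes $\bar v\in\Omega_x$, sets $\bar\omega:=\LL(\bar v)$, and observes that the ratio $v\mapsto\bar\omega(v)/\sqrt{-L(v)}$ is positively $0$-homogeneous, so its maximum on $\Omega_x$ (attained on the ray through $\bar v$) can be read off by an \emph{unconstrained} critical-point computation; plugging $\bar\omega(\bar v)=2L(\bar v)$ into $\del_\alpha[(\bar\omega(v))^2/L(v)]|_{v=\bar v}=0$ yields $\bar\omega_\alpha=\del L/\del v^\alpha(\bar v)$ in two lines, with the other identity "seen in the same way." You instead keep the constraint $\{L=-1/2\}$, invoke Lagrange multipliers, explicitly determine $\mu=-\omega(u^*)$, identify $\LL^*(\omega)=\mu u^*$ by the uniqueness clause, and then deploy the envelope theorem on $L^*(\omega)=-\tfrac12(\omega(u^*(\omega)))^2$; you also prove the $\LL^*$ formula first and recover the $\LL$ formula by "running the argument backwards." The paper's homogenization trick is the tighter argument because it absorbs the constraint and so needs no envelope/non-degeneracy discussion and no reversal step; but your version is more elementary in spirit and exposes the Lagrange-multiplier structure clearly, and your handling of the second equalities by Euler's theorem (including the observation that $L^*$ is positively $2$-homogeneous) matches the paper's.

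One point to clean up in your "running backwards" step: you propose to verify $\omega:=\td L(v)\in\Omega^*_x$ via the reverse Cauchy--Schwarz inequality $L^*(\omega)L(v)\le\tfrac14(\omega(v))^2$, but as the paper states that inequality it is derived \emph{under the hypothesis} $\omega\in\Omega^*_x$, so this would be circular. What you actually need is that, by Euler, $\omega(w)=\sum_{\alpha,\beta}g_{\alpha\beta}(v)v^\beta w^\alpha=g_v(v,w)$, and $g_v(v,w)<0$ for every future-directed causal $w\neq0$ in the same cone as $v$; this is the elementary Lorentzian reverse Cauchy--Schwarz tied directly to the signature, not to $L^*$. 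Alternatively you can sidestep the whole issue (as the paper implicitly does) by appealing to the definition of $\LL:\Omega_x\to\Omega^*_x$ as the map defined "in the same manner" with $\LL=(\LL^*)^{-1}$ by construction, so that $\LL(v)\in\Omega^*_x$ is part of the setup rather than something to be re-derived.
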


\begin{proof}
We consider only $\LL(v)$, the assertion for $\LL^*(\omega)$ is seen in the same way.
Fix $\bar{v} \in \Omega_x$ and put $\bar{\omega}:=\LL(\bar{v})$.
Then, by the definition of $L^*$, the function
$v \longmapsto \bar{\omega}(v)/\sqrt{-L(v)}$ on $\Omega_x$ attains its maximum at $v=\bar{v}$.
Hence we find
\[ \frac{\del}{\del v^{\alpha}} \bigg[ \frac{(\bar{\omega}(v))^2}{L(v)} \bigg]_{v=\bar{v}}
 =-\frac{1}{L^2(\bar{v})} \frac{\del L}{\del v^{\alpha}}(\bar{v}) \cdot \big( \bar{\omega}(\bar{v})\big)^2
 +\frac{2\bar{\omega}(\bar{v})}{L(\bar{v})} \bar{\omega}_{\alpha} =0 \]
for all $\alpha=0,1,\ldots,n$.
This implies, since $\bar{\omega}(\bar{v})=2L(\bar{v})$,
\[ \bar{\omega}_{\alpha}
 =\frac{1}{2} \frac{\bar{\omega}(\bar{v})}{L(\bar{v})} \frac{\del L}{\del v^{\alpha}}(\bar{v})
 =\frac{\del L}{\del v^{\alpha}}(\bar{v}). \]
This yields the first expression of $\LL(v)$,
and then the second is given by Euler's homogeneous function theorem.
$\qedd$
\end{proof}

Note that the expressions of $\LL$ and $\LL^*$ in the lemma
make sense for null and spacelike vectors as well.
Therefore we define
\[ \LL(v) := \sum_{\alpha=0}^n \frac{\del L}{\del v^{\alpha}}(v) \,\td x^{\alpha}, \qquad
 \LL^*(\omega)
 :=\sum_{\alpha=0}^n \frac{\del L^*}{\del \omega_{\alpha}}(\omega) \frac{\del}{\del x^{\alpha}} \]
for general $v \in TM$ and $\omega \in T^*M$ (one can readily see that they are well-defined).
This is indeed the usual definition of the Legendre transform,
and we summarize the basic properties in the next lemma
(see \cite[\S 2.4]{Min-cone} for further discussions).

\begin{lemma}[Properties of $\LL$ and $\LL^*$]\label{lm:Leg+}
\begin{enumerate}[{\rm (i)}]
\item
For any $x \in M$, $\LL$ is injective in each connected component of $\Omega'_x$.

\item
If $\dim M \ge 3$, then $\LL:T_xM \lra T_x^*M$ and $\LL^*:T_x^*M \lra T_xM$
are bijective at every $x \in M$.

\item
If $\dim M \ge 3$, then $\LL^*=\LL^{-1}$ holds on $T_x^*M$ and, for each $v \in \Omega_x$,
$(g^*_{\alpha \beta}(\LL(v)))$ is the inverse matrix of $(g_{\alpha \beta}(v))$.
\end{enumerate}
\end{lemma}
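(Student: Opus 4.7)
The plan is to treat the three parts in order, using the reverse Cauchy--Schwarz inequality from the paragraph preceding Definition~\ref{df:Leg} as the key analytic input and the non-degeneracy of $(g_{\alpha\beta}(v))$ as the main geometric input.

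For (i), I would fix a connected component $C$ of $\Omega'_x$. For the component $C=\Omega_x$ injectivity is essentially tautological: if $\LL(v_1)=\LL(v_2)=\omega$ with $v_1,v_2\in\Omega_x$, then by the construction of $\LL$ described just before Lemma~\ref{lm:Leg} one has $L(v_i)=L^*(\omega)=\omega(v_i)/2$ for $i=1,2$, so the uniqueness clause in Definition~\ref{df:Leg} forces $v_1=v_2=\LL^*(\omega)$. For a general component $C$, I would repeat the polar-cone and variational construction with $C$ in place of $\Omega_x$: since $L<0$ on $C$ and $C$ is a convex cone on which the $F$-indicatrix is strictly convex, the variational definition yields a bijection $C \lra C^*$ that agrees with the coordinate expression for $\LL$ from Lemma~\ref{lm:Leg}, and the same tautological argument applies. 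If one prefers a more direct route, the same conclusion follows by applying the reverse Cauchy--Schwarz inequality to the pair $(\omega,v_i)$, noting that Euler's homogeneous function theorem gives $\omega(v_i)=2L(v_i)$, which forces equality in reverse Cauchy--Schwarz and hence proportionality of $v_1$ and $v_2$; the common $L$-value together with positive $2$-homogeneity then yields $v_1=v_2$.

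For (ii), with injectivity on each component of $\Omega'_x$ in hand, I would upgrade $\LL$ to a bijection on all of $T_xM$. The non-degeneracy of $(g_{\alpha\beta}(v))$ makes $\LL$ a $C^\infty$ local diffeomorphism of $T_xM\setminus 0$ into $T_x^*M\setminus 0$, and positive $1$-homogeneity allows it to descend to a local diffeomorphism between two copies of $S^n$ (with $n=\dim M-1$) after quotienting by the radial $\R_{>0}$-action. A local diffeomorphism between compact manifolds of the same dimension is a covering map, and for $n\ge 2$ the target $S^n$ is simply connected, so the quotient map is a diffeomorphism of spheres; reinstating the radial factor via $1$-homogeneity yields bijectivity of $\LL$ on $T_xM\setminus 0$, and hence on $T_xM$ with the convention $\LL(0)=0$. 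The symmetric statement for $\LL^*$ follows by the same argument applied to $L^*$. I expect this topological step to be the main obstacle and the unique place where the hypothesis $\dim M\ge 3$ is essential; in dimension two the quotient $S^1$ is not simply connected and the conclusion can genuinely fail.

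For (iii), once (ii) is known, the equality $\LL^*=\LL^{-1}$ on $\Omega_x$ follows directly from Definition~\ref{df:Leg}, and extends to all of $T_x^*M$ by continuity together with the global coordinate formulas of Lemma~\ref{lm:Leg}. The Hessian inversion identity is then a routine chain-rule calculation: differentiating the coordinate identity $(\del L^*/\del\omega_\alpha)(\LL(v))=v^\alpha$ with respect to $v^\beta$, and using $\LL_\gamma(v)=\del L/\del v^\gamma$ so that $\del\LL_\gamma/\del v^\beta=g_{\gamma\beta}(v)$, one obtains
\[
\sum_{\gamma=0}^n g^*_{\alpha\gamma}\big( \LL(v) \big)\,g_{\gamma\beta}(v)=\delta^\alpha_\beta,
\]
which is the asserted matrix inversion.
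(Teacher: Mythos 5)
Your route is genuinely different from the paper's: for (i) and (ii) the paper simply cites [Min-cone, Theorems~5 and 6], and for (iii) it proves only the Hessian-inversion identity by the very chain-rule computation you give, delegating the statement $\LL^*=\LL^{-1}$ on $T_x^*M$ to [Min-causality, Theorem~3.2]. Your self-contained sketches for (i) and (ii) are sound where they are filled in. The equality case of the reverse Cauchy--Schwarz inequality, combined with Euler's theorem ($\LL(v)(v)=2L(v)$) and positive $2$-homogeneity, does force $v_1=v_2$ when both lie in $\Omega_x$. For (ii), the covering-space argument --- $\LL$ is a positively $1$-homogeneous local diffeomorphism of $T_xM\setminus0$ into $T_x^*M\setminus0$ (its Jacobian is the nondegenerate matrix $(g_{\alpha\beta}(v))$), it preserves the radial direction by Euler's theorem, so it descends to a covering of the sphere of directions, which is trivial once $\dim M - 1 \ge 2$ --- is the natural explanation of the dimension hypothesis and is almost certainly the content of Minguzzi's Theorem~6 that the paper invokes; this is the most valuable part of your writeup.

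Two steps, however, are left too thin, one minor and one more serious. First, for a general connected component $C$ of $\Omega'_x$ in (i), you correctly note that one should redo the polar-cone and $L^*$ construction with $C$ in place of $\Omega_x$; this works because every component is a convex cone, but it is not a one-liner, since both $L^*$ and the reverse Cauchy--Schwarz inequality as stated in the paper are tied to the chosen future cone, so your "more direct route" as written only covers $C=\Omega_x$. Second, the claim in (iii) that $\LL^*=\LL^{-1}$ on $\Omega^*_x$ "extends to all of $T^*_xM$ by continuity together with the global coordinate formulas" is not an argument: two smooth maps that agree on an open subset need not agree globally. To close this you would have to verify that the extended $L^*$ appearing in Lemma~\ref{lm:Leg} is the Fenchel--Legendre conjugate of $L$ along every ray (so that the inverse relation between $\td L$ and $\td L^*$ is the standard involution property of fiber derivatives), or else cite [Min-causality, Theorem~3.2] as the paper does. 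Once that is in place, your differentiation of $v=\LL^*(\LL(v))$ matches the paper's proof of the matrix identity exactly.
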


\begin{proof}
(i) and (ii) are proved by \cite[Theorem~5]{Min-cone} and \cite[Theorem~6]{Min-cone}, respectively.
Here we only show (iii) (see also \cite[Theorem~3.2]{Min-causality}).
By differentiating
\[ v =\LL^*\big( \LL(v) \big) =\sum_{\alpha=0}^n
 \frac{\del L^*}{\del \omega_{\alpha}}\big( \LL(v) \big) \frac{\del}{\del x^{\alpha}}\Big|_x \]
in $v^{\beta}$, we observe
\[ \delta_{\alpha \beta} =\sum_{\delta=0}^n
 \frac{\del^2 L^*}{\del \omega_{\delta} \del \omega_{\alpha}} \big( \LL(v) \big)
 \frac{\del^2 L}{\del v^{\beta} \del v^{\delta}}(v)
 =\sum_{\delta=0}^n g^*_{\alpha \delta}\big( \LL(v) \big) g_{\delta \beta}(v). \]
This completes the proof.
$\qedd$
\end{proof}

\begin{example}\label{ex:Beem}
\begin{enumerate}[(a)]
\item
In the standard Minkowski space $M=\R^{n+1}$ with
\[ L(v) =\frac{1}{2} \big\{ {-}(v^0)^2 +(v^1)^2 + \cdots +(v^n)^2 \big\}, \qquad
 \Omega_x =\big\{ (v^{\alpha})_{\alpha=0}^n \,\big|\, L(v)<0,\, v^0>0 \big\}, \]
in the canonical coordinates of $TM$ and $T^*M$, we have
\[ L^*(\omega) =\frac{1}{2} \big( {-}\omega_0^2 +\omega_1^2 + \cdots +\omega_n^2 \big), \qquad
 \Omega_x^* =\big\{ (\omega_{\alpha})_{\alpha=0}^n \,\big|\, L^*(\omega)<0,\, \omega_0<0 \big\}, \]
and $\LL(v)=(-v^0,v^1,\ldots,v^n)$.

\item
We shall see that the injectivity on the whole tangent space as in Lemma~\ref{lm:Leg+}(ii)
fails for $\dim M=2$.
Let us consider the Lorentz--Finsler structure
\[ L\bigg( r\cos\theta \frac{\del}{\del x}+r\sin\theta \frac{\del}{\del y} \bigg)
 :=\frac{1}{2} r^2 \cos k\theta \]
of $\R^2$ from \cite{Be} and \cite[Example~2.4]{LMO},
where $k \in \N$ and $(x,y)$ denotes the canonical coordinates
($k=2$ corresponds to the standard Minkowski space).
Note that, if we choose
\[ \Omega_x :=\bigg\{ r\cos\theta \frac{\del}{\del x} +r\sin\theta \frac{\del}{\del y} \,\bigg|\,
 r>0,\, \theta \in \bigg( \frac{\pi}{2k},\frac{3\pi}{2k} \bigg) \bigg\} \]
as future directions, then we have
\[ \Omega^*_x =\bigg\{ r\cos\theta \,\td x +r\sin\theta \,\td y \,\bigg|\,
 r>0,\, \theta \in \bigg( \frac{(3+k)\pi}{2k},\frac{(1+3k)\pi}{2k} \bigg) \bigg\}, \]
provided $k \ge 2$.
When $k=4$, one can rewrite $L$ as
\[ L\bigg( v \frac{\del}{\del x} +w \frac{\del}{\del y} \bigg)
 =\frac{(v^2 -w^2)^2 -(2vw)^2}{2(v^2 +w^2)} =\frac{v^4 -6v^2 w^2 +w^4}{2(v^2 +w^2)}, \]
and we observe from Lemma~\ref{lm:Leg} that
\[ \LL\bigg( v \frac{\del}{\del x} +w \frac{\del}{\del y} \bigg)
 =\bigg( v-\frac{8vw^4}{(v^2+w^2)^2} \bigg) \,\td x +\bigg( w-\frac{8v^4 w}{(v^2+w^2)^2} \bigg) \,\td y, \]
in other words,
\[ \LL\bigg( r\cos\theta \frac{\del}{\del x}+r\sin\theta \frac{\del}{\del y} \bigg)
 =r\cos\theta(1-8\sin^4 \theta) \,\td x +r\sin\theta(1-8\cos^4 \theta) \,\td y. \]
Therefore, for $\theta_1 \in (0,\pi/2)$ and $\theta_2 \in (\pi/2,\pi)$
with $\sin\theta_1 =\sin\theta_2 =8^{-1/4}$, we find
\[ \LL\bigg( r\cos\theta_1 \frac{\del}{\del x}+r\sin\theta_1 \frac{\del}{\del y} \bigg)
 =\LL\bigg( r\cos\theta_2 \frac{\del}{\del x}+r\sin\theta_2 \frac{\del}{\del y} \bigg). \]
\end{enumerate}
\end{example}

\subsection{Differential operators}\label{ssc:LF-Lap}%%%%%%%%%%%%%%%%%
%%%%%%%%%%%%%%%%

A continuous function $f:M \lra \R$ is called a \emph{time function}
if $f(x)<f(y)$ for all $x,y \in M$ with $x<y$.
A $C^1$-function $f:M \lra \R$ is said to be \emph{temporal} if $-\td f(x) \in \Omega^*_x$
for all $x \in M$.
Observe that temporal functions are time functions.

For a temporal function $f:M \lra \R$,
define the \emph{gradient vector} of $-f$ at $x \in M$ by
%\begin{equation}\label{eq:LF-grad}
\[ \Grad(-f)(x) :=\LL^* \big( {-\td f}(x) \big) \in \Omega_x. \]
%\end{equation}
Note that, thanks to Lemmas~\ref{lm:Leg} and \ref{lm:Leg+}, we have for any $v \in T_xM$
\[ g_{\Grad(-f)} \big( \Grad(-f)(x),v \big)
 =-\sum_{\alpha,\beta,\delta=0}^n g_{\alpha \beta}\big( \Grad(-f)(x) \big)
 g^*_{\alpha \delta}\big( {-\td f(x)} \big) \frac{\del f}{\del x^{\delta}}(x) v^{\beta}
 =-\td f(v). \]
For a $C^2$-temporal function $f:M \lra \R$ and $x \in M$ (thereby $\Grad(-f)(x) \in \Omega_x$),
we define the \emph{Hessian} $\Grad^2 (-f):T_xM \lra T_xM$
in the same manner as \eqref{eq:F-Hess} by
%\begin{equation}\label{eq:LF-Hess}
\[ \Grad^2(-f)(v) :=D^{\Grad(-f)}_v \big( \Grad(-f) \big). \]
%\end{equation}
This spacetime Hessian has the same symmetry as in the positive-definite case,
let us give a proof (without coordinate calculations) for thoroughness.

\begin{lemma}[Symmetry of Hessian]\label{lm:Hess}
For a $C^2$-temporal function $f:M \lra \R$, we have
\[ g_{\Grad(-f)}\big( \Grad^2 (-f)(v),w \big) =g_{\Grad(-f)}\big( v,\Grad^2(-f)(w) \big) \]
for all $v,w \in T_xM$.
\end{lemma}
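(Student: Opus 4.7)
The plan is to extend $v,w \in T_xM$ to local $C^\infty$-vector fields $X,Y$ and reduce the claim to two structural properties of the Chern connection, combined with the crucial identity $g_V(V,Z) = -\td f(Z)$ already established immediately before the statement, where $V := \Grad(-f)$.

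The two properties I would invoke—both derived purely algebraically from the coordinate definitions of $\Gamma^\alpha_{\beta\delta}$, $N^\alpha_\beta$, $g_{\alpha\beta}$, and $C_{\alpha\beta\gamma}$ in Subsection~\ref{ssc:LFcurv}, and therefore indifferent to the Lorentzian signature—are: (i) torsion-freeness $\Gamma^\alpha_{\beta\delta}(w) = \Gamma^\alpha_{\delta\beta}(w)$, equivalent to $D^w_X Y - D^w_Y X = [X,Y]$ for any reference $w \in TM \setminus 0$; and (ii) the almost-$g$-compatibility with matched reference vector,
\[ X\big( g_V(Y,Z) \big) = g_V\big( D^V_X Y, Z \big) + g_V\big( Y, D^V_X Z \big) + 2\sum_{\alpha,\beta,\gamma} C_{\alpha\beta\gamma}(V) Y^\alpha Z^\beta (D^V_X V)^\gamma. \]
Contracting any slot of the Cartan tensor with $V$ gives zero by Euler's homogeneous function theorem (the direct analog of \eqref{eq:A}, which still holds because $C_{\alpha\beta\gamma}$ is $0$-homogeneous in $v$).

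Next I would specialize (ii) to $Z = V$, whereupon the Cartan correction vanishes. Using $g_V(Y,V) = -\td f(Y)$ this yields
\[ X\big( {-\td f(Y)} \big) = g_V\big( V, D^V_X Y \big) + g_V\big( Y, \Grad^2(-f)(X) \big), \]
and the symmetric identity holds with $X$ and $Y$ interchanged. Subtracting, the $g_V(V, \cdot)$ terms combine through (i) into $g_V(V, [X,Y]) = -\td f([X,Y])$, while the left-hand sides subtract to $-\td f([X,Y])$ by the standard identity $X(\td f(Y)) - Y(\td f(X)) = \td f([X,Y])$ (i.e., $\td \circ \td = 0$). These cancel exactly, leaving
\[ g_V\big( \Grad^2(-f)(X), Y \big) = g_V\big( X, \Grad^2(-f)(Y) \big) \]
after using symmetry of $g_V$, which is the claim.

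The main obstacle is establishing property (ii) in the Lorentz--Finsler setting: in the positive-definite Finsler case it is classical (and is precisely what underlies \cite[Lemma~2.3]{OSbw}). I would derive it by differentiating $g_{\alpha\beta}(V(x'))$ in $x'$, splitting via the chain rule into $\del g_{\alpha\beta}/\del x^\gamma$ and $\del g_{\alpha\beta}/\del v^\mu = 2 C_{\alpha\beta\mu}$ contributions, and substituting the defining relation
\[ \frac{\del g_{\alpha\beta}}{\del x^\gamma}(v) - 2 C_{\alpha\beta\mu}(v) N^\mu_\gamma(v) = g_{\alpha\delta}(v) \Gamma^\delta_{\gamma\beta}(v) + g_{\beta\delta}(v) \Gamma^\delta_{\gamma\alpha}(v) \]
of the Chern connection coefficients. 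Each step is purely algebraic and transfers from the Finsler case unchanged, so the proof structure above carries over to the spacetime context.
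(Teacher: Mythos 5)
Your argument is correct and follows essentially the same route as the paper's proof: extend $v,w$ to vector fields, use the Chern connection's almost-$g$-compatibility with the Cartan correction vanishing when one slot matches the reference vector $\Grad(-f)$, combine with torsion-freeness to produce the commutator $[X,Y]$, and cancel against $\td f([X,Y])$. The only difference is presentational: the paper cites \cite[Exercise~10.1.2]{BCS} for the compatibility identity rather than re-deriving it, whereas you sketch the coordinate computation explicitly — a sound addition since the transfer of that identity from the positive-definite to the Lorentz--Finsler signature does deserve the comment you give.
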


\begin{proof}
Put $h:=-f$ for brevity,
and let $V,W$ be extensions of $v,w$ to smooth vector fields around $x$, respectively.
Then we have
\begin{align*}
g_{\Grad h} \big( D^{\Grad h}_V(\Grad h),W \big)
&=V\big[ g_{\Grad h}(\Grad h,W) \big] -g_{\Grad h}(\Grad h,D^{\Grad h}_V W) \\
&=V[\td h(W)] -\td h(D^{\Grad h}_V W)
\end{align*}
(see \cite[Exercise~10.1.2]{BCS} for the first equality).
Combining this with $D^{\Grad h}_V W -D^{\Grad h}_W V =[V,W]$,
we obtain
\[ g_{\Grad h} \big( D^{\Grad h}_V(\Grad h),W \big) -g_{\Grad h} \big( D^{\Grad h}_W(\Grad h),V \big)
 =\td h([V,W]) -\td h([V,W]) =0 \]
as desired.
$\qedd$
\end{proof}

Similarly to \eqref{eq:F-Lap}, we define the spacetime \emph{Laplacian}
(or \emph{d'Alembertian}) as the trace of the Hessian,
\begin{equation}\label{eq:LF-Lap}
\Lap (-f):=\trace\big( \Grad^2(-f) \big),
\end{equation}
for $C^2$-temporal functions $f$.
We remark that this Laplacian is not elliptic but hyperbolic,
and is nonlinear (since the Legendre transform is nonlinear).

\section{Comparison theorems on weighted Finsler spacetimes}\label{sc:LFcomp}%%%%%
%%%%%%%%%%%%%%%%%

Comparison theorems in Section~\ref{sc:Fcomp} can be generalized
to Finsler spacetimes in a suitable way.
We need to be careful with some Lorentzian behaviors and
introduce some special notions in Lorentzian geometry,
so we will give at least outlines of the proofs.
In addition, let us again stress that $\dim M=n+1$ (see also Remark~\ref{rm:N-n} below).

\subsection{Weighted Finsler spacetimes}\label{ssc:LF-wRic}%%%%%%%%%
%%%%%%%%%%%%%%%%%

Let $(M,L)$ be a Finsler spacetime.
Similarly to Section~\ref{sc:Fcomp}, we employ a weight function
$\psi:\overline{\Omega} \setminus 0 \lra \R$ such that $\psi(cv)=\psi(v)$ for all $c>0$,
and set $\psi_{\eta}(t):=\psi(\dot{\eta}(t))$ along causal geodesics $\eta$
(as in \eqref{eq:psi_eta}).

\begin{definition}[Weighted Ricci curvature]\label{df:LF-wRic}
Given $v \in \overline{\Omega} \setminus 0$,
let $\eta:(-\ve,\ve) \lra M$ be the causal geodesic with $\dot{\eta}(0)=v$.
Then, for $N \in \R \setminus \{n\}$, define the \emph{weighted Ricci curvature} by
%\begin{equation}\label{eq:LF-wRic}
\[ \Ric_N(v) :=\Ric(v) +\psi''_{\eta}(0) -\frac{\psi'_{\eta}(0)^2}{N-n}. \]
%\end{equation}
We also define
\[ \Ric_{\infty}(v) :=\Ric(v) +\psi''_{\eta}(0),
 \qquad \Ric_n(v) :=\lim_{N \downarrow n} \Ric_N(v), \]
and $\Ric_N(0):=0$.
\end{definition}

\begin{remark}\label{rm:N-n}
Note that, despite $\dim M=n+1$,
the denominator $N-n$ in the last term of $\Ric_N$ is unchanged from \eqref{eq:F-wRic}.
Therefore $\Ric_N$ in the Lorentzian case corresponds to $\Ric_{N+1}$ in the positive-definite case.
In particular, $\Ric_0$ in this section corresponds to $\Ric_1$ in Section~\ref{sc:Fcomp}.
\end{remark}

We will say that $\Ric_N \ge K$ holds \emph{in timelike directions} for some $K \in \R$
if we have $\Ric_N(v) \ge KF^2(v) =-2KL(v)$ for all $v \in \Omega$
(recall \eqref{eq:LtoF} for the definition of $F$).

Due to our convention $\dim M=n+1$, we slightly modify the $\ez$-range
in Definition~\ref{df:F-eran} as follows (in the same form as in \cite{LMO}).

\begin{definition}[$\ez$-range]\label{df:LF-eran}
Given $N \in (-\infty,0] \cup [n,+\infty]$, we will consider $\ez \in \R$ in the following
\emph{$\ez$-range}:
\begin{equation}\label{eq:LF-eran}
\epsilon=0 \,\text{ for } N=0, \qquad
 \vert\epsilon\vert < \sqrt{\frac{N}{N-n}} \,\text{ for } N \neq 0,n, \qquad
 \ez \in \R \,\text{ for } N=n.
\end{equation}
The associated constant $c =c(N,\ez)$ is defined by
\begin{equation}\label{eq:LF-c}
c :=\frac{1}{n}\lf( 1-\ez^2\frac{N-n}{N} \r) >0
\end{equation}
for $N \ne 0$, and $c(0,0):=1/n$.
\end{definition}

Note that $\ez=1$ is admissible only for $N \in [n,+\infty)$, while $\ez=0$ is always admissible.
For a future-directed timelike geodesic $\eta:[0,l) \lra M$ and $\ez \in \R$, we set
\begin{equation}\label{eq:L-phi_e}
\varphi_{\eta}(t) := \int_0^t \e^{\frac{2(\ez-1)}{n} \psi_\eta(s)} \,\td s
\end{equation}
in the same way as \eqref{eq:phi_e} throughout this section.

\subsection{Bonnet--Myers theorem}\label{ssc:LF-BM}%%%%%%%%%
%%%%%%%%%%%%%%%%%

We have shown in \cite[Theorem~5.17]{LMO}
the Bonnet--Myers theorem for weighted Finsler spacetimes
in the form that $\Ric_N \ge K>0$ with $N \in [n,+\infty)$ implies $\diam(M) \le \pi\sqrt{N/K}$
(we refer to \cite[Chapter~11]{BEE} for the Lorentzian case).
In order to generalize this to the one with $\ez$-range,
let us recall some notations and results of \cite{LMO}.

Given a timelike geodesic $\eta:[0,l) \lra M$ of \emph{unit speed} $F(\dot{\eta}) \equiv 1$
(equivalently, $L(\dot{\eta}) \equiv -1/2$),
we will denote by $N_\eta(t) \subset T_{\eta(t)}M$ the space of vectors
orthogonal to $\dot{\eta}(t)$ with respect to $g_{\dot{\eta}(t)}$.
For simplicity, the covariant derivative $D^{\dot{\eta}}_{\dot{\eta}}X$
of a vector field $X$ along $\eta$ will be denoted by $X'$.

\begin{definition}[Jacobi and Lagrange tensor fields]\label{df:Jtensor}
Let $\eta:[0,l) \lra M$ be a timelike geodesic of unit speed.
\begin{enumerate}[(1)]
\item
A smooth tensor field $\sJ$, giving an endomorphism
$\sJ(t):N_{\eta}(t) \lra N_{\eta}(t)$ for each $t \in [0,l)$,
is called a \emph{Jacobi tensor field along $\eta$} if we have
\begin{equation}\label{eq:Jtensor}
\sJ''+\sR \sJ=0
\end{equation}
and $\ker(\sJ(t)) \cap \ker(\sJ'(t)) =\{0\}$ for all $t$,
where $\sR(t):=R_{\dot\eta(t)}:N_{\eta}(t) \lra N_{\eta}(t)$ is the curvature endomorphism.

\item
A Jacobi tensor field $\sJ$ is called a \emph{Lagrange tensor field} if
\begin{equation}\label{eq:Ltensor}
(\sJ')^{\sT} \sJ -\sJ^{\sT} \sJ'=0
\end{equation}
holds on $[0,l)$, where the transpose $\sT$ is taken with respect to $g_{\dot \eta}$.
\end{enumerate}
\end{definition}

Some remarks on those notations are in order.

\begin{remark}\label{rm:Jtensor}
\begin{enumerate}[(a)]
\item
The equation \eqref{eq:Jtensor} means that, for any
$g_{\dot{\eta}}$-parallel vector field $P$ along $\eta$ (namely $P' \equiv 0$),
$Y(t):=\sJ(t)(P(t))$ is a Jacobi field along $\eta$.
Then the condition $\ker(\sJ(t)) \cap \ker(\sJ'(t)) =\{0\}$
implies that $Y=\sJ(P)$ is not identically zero for every nonzero $P$.
Note also that Lemma~\ref{lm:LF-curv}(ii) ensures
$R_{\dot{\eta}(t)}(w) \in N_{\eta}(t)$ for all $w \in T_{\eta(t)}M$.

\item
The equation \eqref{eq:Ltensor} means that
$\sJ^\sT\sJ'$ is $g_{\dot\eta}$-symmetric, precisely,
given two $g_{\dot{\eta}}$-parallel vector fields $P_1,P_2$ along $\eta$,
the Jacobi fields $Y_i:=\sJ(P_i)$ satisfy
\begin{equation}\label{eq:Ltensor'}
g_{\dot{\eta}}(Y'_1,Y_2) -g_{\dot{\eta}}(Y_1,Y'_2) \equiv 0.
\end{equation}
Since \eqref{eq:Jtensor} and Lemma~\ref{lm:LF-curv}(iii)
(with the help of \cite[(3.1)]{LMO}, see also \cite[Exercise~5.2.3]{BCS}) yield that
$[g_{\dot{\eta}}(Y'_1,Y_2) -g_{\dot{\eta}}(Y_1,Y'_2)]' \equiv 0$,
we have \eqref{eq:Ltensor'} for all $t$ if it holds at some $t$.
\end{enumerate}
\end{remark}

Given a Lagrange tensor field $\sJ$ along $\eta$, define $\sB:=\sJ' \sJ^{-1}$,
which is symmetric by \eqref{eq:Ltensor}.
We remark that $A$ (resp.\ $B,R$) in Section~\ref{sc:Fcomp} corresponds to
$\sJ^{\sT} \sJ$ (resp.\ $\sJ^{\sT} \sB (\sJ^{\sT})^{-1}, \sJ^{\sT} \sR \sJ$),
and that $A'=2BA$ in Lemma~\ref{lm:F-Bish} is equivalent to $\sB=\sJ' \sJ^{-1}$.
Multiplying \eqref{eq:Jtensor} by $\sJ^{-1}$ from right,
we arrive at the corresponding \emph{Riccati equation}
\[ \sB' +\sB^2 +\sR=0 \]
(see \cite[(5.3)]{LMO}, compare this with \eqref{eq:F-Ricc}).
We further define the \emph{expansion scalar}
\[ \theta(t) :=\trace\big( \sB(t) \big), \]
and  the \emph{shear tensor} (the traceless part of $\sB$)
\[ \sigma(t) :=\sB(t) -\frac{\theta(t)}{n} \sI_n(t), \]
where $\sI_n(t)$ denotes the identity of $N_{\eta}(t)$.

The weighted counterparts will make use of the parametrization $\varphi_{\eta}$ in \eqref{eq:L-phi_e}.
Note that, similarly to \eqref{eq:eta'},
\[ (\eta \circ \varphi_{\eta}^{-1})'(\tau)
 =\e^{-\frac{2(\ez-1)}{n}\psi_{\eta}(\varphi_{\eta}^{-1}(\tau))} \dot{\eta}\big( \varphi_{\eta}^{-1}(\tau) \big) \]
for $\tau \in [0,\varphi_{\eta}(l))$.
Define, for $\ez \in \R$ and $t \in [0,l)$,
\begin{align*}
\sJ_{\psi}(t) &:= \e^{-\psi_{\eta}(t)/n} \sJ(t),
\end{align*}
and for $t \in (0,l)$,
\begin{align*}
\sB_{\ez}(t) &:= (\sJ_{\psi} \circ \varphi_{\eta}^{-1})' \big( \varphi_{\eta}(t) \big) \cdot \sJ_{\psi}(t)^{-1}
 =\e^{-\frac{2(\ez-1)}{n}\psi_{\eta}(t)} \bigg( \sB(t)-\frac{\psi'_{\eta}(t)}{n}\sI_n(t) \bigg), \\
\theta_{\ez}(t) &:= \trace\big( \sB_{\ez}(t) \big)
 =\e^{-\frac{2(\ez-1)}{n}\psi_{\eta}(t)} \big( \theta(t)-\psi'_{\eta}(t) \big), \\
\sigma_{\ez}(t) &:= \sB_{\ez}(t) -\frac{\theta_{\ez}(t)}{n} \sI_n(t)
 =\e^{-\frac{2(\ez-1)}{n}\psi_{\eta}(t)} \sigma(t).
\end{align*}
Then the \emph{weighted Riccati equation} is given by
\[ (\sB_{\ez} \circ \varphi_{\eta}^{-1})'
 +\frac{2\ez}{n} (\psi_{\eta} \circ \varphi_{\eta}^{-1})' \cdot \sB_{\ez}(\varphi_{\eta}^{-1})
 +\sB_{\ez}^2(\varphi_{\eta}^{-1}) +\sR_{(0,\ez)}(\varphi_{\eta}^{-1}) =0 \]
on $(0,\varphi_{\eta}(l))$, where
\[ \sR_{(N,\ez)}(t) :=\e^{-\frac{4(\ez-1)}{n}\psi_{\eta}(t)}
 \bigg\{ \sR(t) +\frac{1}{n} \bigg( \psi''_{\eta}(t) -\frac{\psi'_{\eta}(t)^2}{N-n} \bigg) \sI_n(t) \bigg\} \]
(\cite[Lemma~5.5]{LMO}).
Observe that $\trace(\sR_{(N,\ez)}(t))=\Ric_N((\eta \circ \varphi_{\eta}^{-1})'(\varphi_{\eta}(t)))$.

We shall need the timelike weighted \emph{Raychaudhuri inequality},
which was proved in \cite[Proposition 5.7]{LMO} as a consequence of the above
weighted Riccati equation.

\begin{theorem}[Raychaudhuri inequality]\label{th:twri}
Let $\sJ$ be a nonsingular Lagrange tensor field along a timelike geodesic $\eta:[0,l) \lra M$ of unit speed.
Then, for every $\ez \in \R$ and $N \in (-\infty,0] \cup [n,+\infty]$, we have
%\begin{equation}\label{eq:twri}
\[ (\theta_\ez \circ \varphi_{\eta}^{-1})'
 \le -\Ric_N\! \big( (\eta \circ \varphi_{\eta}^{-1})' \big)
 -\trace\big( \sigma_{\ez}^2 (\varphi_{\eta}^{-1}) \big)
 -c\theta_{\ez}^2(\varphi_{\eta}^{-1}) \]
%\end{equation}
on $(0,\varphi_{\eta}(l))$ with $c=c(N,\ez)$ in \eqref{eq:LF-c}.
\end{theorem}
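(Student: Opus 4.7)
The plan is to derive the inequality by taking the trace of the weighted Riccati equation stated just above the theorem and then completing a square involving $\psi_\eta'$ and $\theta_\ez$. First, applying $\trace$ to the identity
\[
(\sB_{\ez} \circ \varphi_{\eta}^{-1})'
 +\frac{2\ez}{n} (\psi_{\eta} \circ \varphi_{\eta}^{-1})' \cdot \sB_{\ez}(\varphi_{\eta}^{-1})
 +\sB_{\ez}^2(\varphi_{\eta}^{-1}) +\sR_{(0,\ez)}(\varphi_{\eta}^{-1}) =0
\]
yields $(\theta_\ez\circ\varphi_\eta^{-1})' + \tfrac{2\ez}{n}(\psi_\eta\circ\varphi_\eta^{-1})'\theta_\ez(\varphi_\eta^{-1}) + \trace(\sB_\ez^2)(\varphi_\eta^{-1}) + \Ric_0((\eta\circ\varphi_\eta^{-1})') = 0$, using the observation $\trace(\sR_{(0,\ez)}(t))=\Ric_0((\eta\circ\varphi_\eta^{-1})'(\varphi_\eta(t)))$ made just before the statement. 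Decomposing $\sB_\ez = \sigma_\ez + (\theta_\ez/n)\sI_n$ and invoking $\trace(\sigma_\ez)=0$ gives $\trace(\sB_\ez^2) = \trace(\sigma_\ez^2) + \theta_\ez^2/n$.

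Next I would convert $\Ric_0$ into $\Ric_N$. From the definition,
\[
\Ric_N(v) - \Ric_0(v) = -\frac{(\psi'_\eta(0))^2}{N-n} - \frac{(\psi'_\eta(0))^2}{n} = -\frac{N(\psi'_\eta(0))^2}{n(N-n)},
\]
and by $2$-homogeneity of $\Ric_N$ the same relation, rescaled by $e^{-\frac{4(\ez-1)}{n}\psi_\eta}$, holds when evaluated at $(\eta\circ\varphi_\eta^{-1})'(\tau)$, with $\psi'_\eta(0)$ replaced by $(\psi_\eta\circ\varphi_\eta^{-1})'(\tau)$. Substituting, the asserted inequality reduces to showing
\[
-\frac{N\,[(\psi_\eta\circ\varphi_\eta^{-1})']^2}{n(N-n)}
 +\bigg(c-\frac{1}{n}\bigg)\theta_\ez^2
 -\frac{2\ez}{n}(\psi_\eta\circ\varphi_\eta^{-1})'\theta_\ez \le 0.
\]
Inserting the value $c-\tfrac{1}{n} = -\ez^2(N-n)/(nN)$ from \eqref{eq:LF-c} and multiplying through by $-n$, the left-hand side becomes precisely the perfect square
\[
\bigg(\sqrt{\tfrac{N}{N-n}}\,(\psi_\eta\circ\varphi_\eta^{-1})' + \ez\sqrt{\tfrac{N-n}{N}}\,\theta_\ez\bigg)^{\!2} \ge 0,
\]
which closes the argument for $N\in(-\infty,0)\cup(n,+\infty)$; note that the $\ez$-range \eqref{eq:LF-eran} forces $N/(N-n)\ge 0$, so both radicals are real.

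The remaining step is to treat the boundary and limiting values $N=0$, $N=n$, and $N=+\infty$. For $N=0$ the $\ez$-range collapses to $\ez=0$, so both terms in the square vanish and equality holds trivially; for $N=+\infty$ one has $N/(N-n)\to 1$ and $\ez^2(N-n)/N\to\ez^2$ and the same square identity survives in the limit; for $N=n$ (allowed for any $\ez\in\R$) one takes $N\downarrow n$ after the algebraic manipulation, using the definition $\Ric_n(v):=\lim_{N\downarrow n}\Ric_N(v)$ and the continuity of $c(N,\ez)$ on the relevant domain.

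The main obstacle I anticipate is bookkeeping: keeping straight whether derivatives are in $t$ or in $\tau=\varphi_\eta(t)$, and correctly tracking the conformal factors $e^{\pm\frac{2(\ez-1)}{n}\psi_\eta}$ coming from the reparametrization, so that the $\Ric_0\to\Ric_N$ replacement is performed at the right base point. Once the completing-the-square identity is set up in the correct variables, the rest is algebra that is genuinely forced by the definition \eqref{eq:LF-c} of $c(N,\ez)$.
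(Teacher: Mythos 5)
Your argument is correct, and it follows exactly the route the paper indicates: take the trace of the weighted Riccati equation, split $\sB_\ez$ into shear and expansion, then convert $\Ric_0$ to $\Ric_N$ and complete the square using the specific value of $c(N,\ez)$. The only small imprecision is attributing the positivity of $N/(N-n)$ to the $\ez$-range (the theorem in fact allows arbitrary $\ez\in\R$); what actually guarantees $N/(N-n)\ge 0$ is the stated hypothesis $N\in(-\infty,0]\cup[n,+\infty]$, and this is all the perfect-square step needs.
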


Now we can follow the lines of \cite[\S 5.5]{LMO} to see the Bonnet--Myers theorem
with $\ez$-range.
The \emph{timelike diameter} of $(M,L)$ is defined as $\diam(M):=\sup_{x,y \in M}d(x,y)$
(recall that $d(x,y)=0$ if $x \not< y$),
we refer to \cite[\S 11.1]{BEE} for some accounts on $\diam(M)$.
We remark that the finite diameter does not imply the compactness in the Lorentzian setting.

\begin{theorem}[Bonnet--Myers theorem]\label{th:LF-BM}
Let $(M,L,\psi)$ be a globally hyperbolic Finsler spacetime of dimension $n+1\ge 2$.
Suppose that, for some $N \in (-\infty,0] \cup [n,+\infty]$,
$\ez$ in the $\ez$-range \eqref{eq:LF-eran}, $K>0$ and $b>0$, we have
\begin{equation}\label{eq:LF-riclb}
\Ric_N(v)\ge KF^2(v) \e^{\frac{4(\ez-1)}{n}\psi(v)}
\end{equation}
for all $v \in \Omega$ and
\begin{equation}\label{eq:LF-wb}
\e^{-\frac{2(\ez-1)}{n}\psi} \le b.
\end{equation}
Then we have
\[ \diam(M) \le \frac{b\pi}{\sqrt{cK}}. \]
\end{theorem}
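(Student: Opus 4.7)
The plan is to follow the general pattern of the proof of Theorem~\ref{th:F-BM} (and of \cite[Theorem~5.17]{LMO}), but replacing the Bishop inequality with the timelike weighted Raychaudhuri inequality in Theorem~\ref{th:twri}. We argue by contradiction: assume there exist $p,q \in M$ with $l := d(p,q) > b\pi/\sqrt{cK}$. By the global hyperbolicity hypothesis, Theorem~\ref{th:connect} provides a maximal causal geodesic from $p$ to $q$; after unit-speed reparametrization this is a timelike geodesic $\eta:[0,l] \lra M$ with $F(\dot{\eta}) \equiv 1$. We then aim to derive a conjugate point on $\eta$ before the parameter $b\pi/\sqrt{cK}$, contradicting maximality.

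Fix a $g_{\dot{\eta}(0)}$-orthonormal basis of $N_\eta(0)$, parallel-translate it along $\eta$, and let $\sJ$ be the Lagrange tensor field along $\eta$ determined by the initial conditions $\sJ(0)=0$ and $\sJ'(0)=\sI_n(0)$. As long as $\eta$ contains no point conjugate to $p=\eta(0)$, $\sJ$ is nonsingular on $(0,l)$, and one may form $\sB=\sJ'\sJ^{-1}$ and the weighted objects $\sB_\ez$, $\theta_\ez$, $\sigma_\ez$ as in Subsection~\ref{ssc:LF-BM}. Writing $\tilde\theta(\tau) := \theta_\ez(\varphi_\eta^{-1}(\tau))$, Theorem~\ref{th:twri} gives, after discarding the nonnegative shear term $\trace(\sigma_\ez^2(\varphi_\eta^{-1}))$,
\[
\tilde\theta'(\tau) \le -\Ric_N\!\big( (\eta\circ\varphi_\eta^{-1})'(\tau) \big) -c\tilde\theta(\tau)^2.
\]
Since $(\eta\circ\varphi_\eta^{-1})'(\tau) = \e^{-\frac{2(\ez-1)}{n}\psi_\eta(\varphi_\eta^{-1}(\tau))}\dot\eta(\varphi_\eta^{-1}(\tau))$, $F(\dot\eta)\equiv 1$, $\psi$ is $0$-homogeneous, and $\Ric_N$ is $2$-homogeneous, hypothesis \eqref{eq:LF-riclb} yields $\Ric_N((\eta\circ\varphi_\eta^{-1})'(\tau)) \ge K$, and hence the clean Riccati inequality
\[
\tilde\theta'(\tau) \le -K -c\tilde\theta(\tau)^2, \qquad \tau \in (0,\varphi_\eta(l)).
\]

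Next I compare $\tilde\theta$ with the solution $y(\tau) = \sqrt{K/c}\,\cot(\sqrt{cK}\,\tau)$ of the ODE $y' = -K-cy^2$ on $(0,\pi/\sqrt{cK})$, which satisfies $y(\tau)\to +\infty$ as $\tau\to 0^+$ and $y(\tau)\to -\infty$ as $\tau \to (\pi/\sqrt{cK})^-$. To apply a Riccati-type comparison, I first verify the behavior of $\tilde\theta$ near $\tau=0$: since $\sJ(t) \sim t\,\sI_n$ as $t\to 0^+$, one has $\theta(t) \sim n/t$ and consequently $\tilde\theta(\tau) \to +\infty$ as $\tau \to 0^+$, faster than any finite comparison. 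A standard comparison argument (e.g.\ considering the difference $\tilde\theta-y$ and using that both blow up at $0$ with a controlled expansion, or equivalently substituting $\tilde\theta = (1/c)(\log h_1)'/\ldots$ to convert the Riccati inequality into the linear inequality $h_1''\le -cK h_1$ and comparing with $\bs(\tau)=\sin(\sqrt{cK}\tau)$ exactly as in the proof of Theorem~\ref{th:F-BM}) shows that $\tilde\theta$ must diverge to $-\infty$ at some $\tau_0 \in (0,\pi/\sqrt{cK}]$. This forces $\sJ$ to become singular at $t_0 := \varphi_\eta^{-1}(\tau_0) \in (0,l]$, producing a conjugate point $\eta(t_0)$ to $p$.

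Finally, from the weight bound \eqref{eq:LF-wb} we obtain $\varphi_\eta(t_0) = \int_0^{t_0}\e^{\frac{2(\ez-1)}{n}\psi_\eta(s)}\,\td s \ge t_0/b$, so $t_0 \le b\tau_0 \le b\pi/\sqrt{cK}$. A timelike geodesic past its first conjugate point is no longer maximal (standard fact from Lorentzian variational theory, valid also in the Finsler setting, e.g.\ via the second variation along a suitable Jacobi field vanishing at $0$ and $t_0$), so $l \le t_0 \le b\pi/\sqrt{cK}$, contradicting our assumption. The main obstacle is the careful handling of the $\tau\to 0^+$ asymptotics of $\tilde\theta$ and the Riccati/Sturm comparison with $y$; this is where the $\ez$-range restriction enters, since it guarantees $c(N,\ez)>0$ and thus that the comparison ODE has the required $\sin$-type oscillation. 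All other ingredients (existence of the Lagrange tensor field starting from $p$, the curvature substitution, the weight integration) are essentially bookkeeping built on Theorem~\ref{th:twri} and Theorem~\ref{th:connect}.
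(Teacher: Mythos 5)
Your proposal is correct and follows essentially the same route as the paper: contradiction via Theorem~\ref{th:connect}, the Lagrange tensor field along the maximal timelike geodesic, Theorem~\ref{th:twri} to control $\theta_\ez$, the $2$-homogeneity of $\Ric_N$ to reduce the curvature bound to the constant $K$, and a Riccati/Sturm comparison producing a conjugate point, with \eqref{eq:LF-wb} converting $\tau_0\le\pi/\sqrt{cK}$ into $t_0\le b\pi/\sqrt{cK}$. The only cosmetic difference is that you phrase the comparison in terms of $\tilde\theta$ and $\cot$, while the paper passes immediately to $h_1(\tau)=(\det\sJ_\psi(\varphi_\eta^{-1}(\tau)))^c$, obtains $h_1''\le -cKh_1$, and reuses the $\sin$-comparison and $\tau\to0^+$ asymptotics already worked out in the proof of Theorem~\ref{th:F-BM}; you explicitly note this equivalence, so there is no gap.
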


\begin{proof}
Suppose in contrary that there are $x,y\in M$ such that $l:=d(x,y)>b\pi/\sqrt{cK}$.
By Theorem~\ref{th:connect}, one can find a maximal timelike geodesic $\eta:[0,l] \lra M$
from $x$ to $y$ with $F(\dot{\eta}) \equiv 1$, and put $v:=\dot{\eta}(0) \in \Omega_x$.
Consider the Jacobi tensor field $\sJ$ given by
$\sJ(t)(w):=\td(\exp_x)_{tv}(tP(0))$ for $w \in N_{\eta}(t)$, where
$P$ is the $g_{\dot{\eta}}$-parallel vector field along $\eta$ with $P(t)=w$.
Then $\sJ$ is a Lagrange tensor field
(recall Remark~\ref{rm:Jtensor} and see the proof of \cite[Proposition~5.13]{LMO}).

Put
\[ h(t) := \big(\! \det \sJ_{\psi}(t) \big)^c =\e^{-c\psi_{\eta}(t)} \big(\! \det \sJ(t) \big)^c >0 \]
for $c$ in \eqref{eq:LF-c},
and $h_1(\tau):=h(\varphi_{\eta}^{-1}(\tau))$ for $\tau \in [0,\varphi_{\eta}(l))$
similarly to Proposition~\ref{pr:F-Bish}.
Then we have, since $\log h_1(\tau) =c \log[\det \sJ_{\psi}(\varphi_{\eta}^{-1}(\tau))]$,
\[ \frac{h'_1(\varphi_{\eta}(t))}{h_1(\varphi_{\eta}(t))} =c \trace\!\big( \sB_{\ez}(t) \big)
 =c\theta_{\ez}(t), \qquad
 \frac{h''_1 h_1 -(h'_1)^2}{h_1^2} =c(\theta_{\ez} \circ \varphi_{\eta}^{-1})'. \]
Hence it follows from Theorem~\ref{th:twri} that
\begin{equation}\label{eq:LF-Bish}
h''_1(\tau) \le -ch_1(\tau) \Ric_N\! \big( (\eta \circ \varphi_{\eta}^{-1})'(\tau) \big)
\end{equation}
for $\tau \in (0,\varphi_{\eta}(l))$ (as in \cite[Proposition~5.14]{LMO}).
This is exactly the analogue to the weighted Bishop inequality \eqref{eq:F-Bish}.
Under the hypotheses \eqref{eq:LF-riclb} and \eqref{eq:LF-wb},
we can show the existence of a conjugate point $\eta(t_0)$ to $\eta(0)$
for some $t_0 \le b\pi/\sqrt{cK}$ by the same argument as in Theorem~\ref{th:F-BM}.
This contradicts the maximality of $\eta$ and completes the proof.
$\qedd$
\end{proof}

Similarly to Remark~\ref{rm:b},
one can also obtain from the above proof the deformed diameter estimate
\[ \varphi_{\eta}(t_0) =\int_0^{t_0} \e^{\frac{2(\ez -1)}{n}\psi_{\eta}(s)} \,\td s
 \le \frac{\pi}{\sqrt{cK}} \]
without assuming \eqref{eq:LF-wb}.

\subsection{Laplacian comparison theorem}\label{ssc:LF-Lcomp}%%%%%%%%%
%%%%%%%%%%%%%%%%%

Next we consider the Laplacian (d'Alembertian) comparison theorem with $\ez$-range,
as the Lorentzian counterpart to Theorem \ref{th:F-Lcomp}.
The Laplacian comparison theorem plays an essential role in the Lorentzian \emph{splitting theorem}
(see \cite[Chapter~14]{BEE}, \cite{Ca,WW2}).

Given $z \in M$, we say that $x \in I^+(z)$ is a \emph{timelike cut point} to $z$
if there is a maximal timelike geodesic $\eta:[0,1] \lra M$ from $z$ to $x$
such that its extension $\bar{\eta}:[0,1+\ve] \lra M$ is not maximal for any $\ve>0$.
The \emph{timelike cut locus} $\Cut(z)$ is the set of all cut points to $z$.
Notice that the function $u(x):=d(z,x)$ satisfies
$-\td u(x) \in \Omega_x^*$ for $x \in I^+(z) \setminus \Cut(z)$,
and hence $\Lap(-u)$ as in \eqref{eq:LF-Lap}
is well-defined on $I^+(z) \setminus \Cut(z)$.
Then, similarly to \eqref{eq:F-wLap}, we define the \emph{$\psi$-Laplacian} of $u$ (or $-u$) by
\[ \Lap_{\psi}(-u)(x) :=\Lap(-u)(x) -\psi'_{\eta}\big( d(z,x) \big) \]
on $I^+(z) \setminus \Cut(z)$, where $\eta:[0,d(z,x)] \lra M$
is the unique maximal timelike geodesic of unit speed from $z$ to $x$.
Recall \eqref{eq:bs} for the definition of $\bs_{\kappa}$.

\begin{theorem}[Laplacian comparison theorem]\label{th:LF-Lcomp}
Let $(M,L,\psi)$ be a globally hyperbolic Finsler spacetime of dimension $n+1 \ge 2$
and $N \in (-\infty,0] \cup [n,+\infty]$, $\ez \in \R$ in the $\ez$-range \eqref{eq:LF-eran},
$K \in \R$ and $b \ge a>0$.
Suppose that
\[ \Ric_N(v)\ge KF^2(v) \e^{\frac{4(\ez-1)}{n}\psi(v)} \]
holds for all $v \in \Omega$ and
\begin{equation}\label{eq:LF-wab}
a \le \e^{-\frac{2(\ez-1)}{n}\psi} \le b.
\end{equation}
Then, for any $z \in M$, the distance function $u(x):=d(z,x)$ satisfies
%\begin{equation}\label{eq:LF-Lcomp}
\[ \Lap_{\psi}(-u)(x) \le \frac{1}{c\rho} \frac{\bs'_{cK}(u(x)/b)}{\bs_{cK}(u(x)/b)} \]
%\end{equation}
on $I^+(z) \setminus \Cut(z)$, where $\rho:=a$ if $\bs'_{cK}(u(x)/b) \ge 0$
and $\rho:=b$ if $\bs'_{cK}(u(x)/b)<0$.
\end{theorem}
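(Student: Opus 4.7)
The plan is to mirror the strategy of Theorem~\ref{th:F-Lcomp}, replacing the matrix calculus in Proposition~\ref{pr:F-Bish} with the tensorial machinery from Subsection~\ref{ssc:LF-BM}. Fix $x \in I^+(z) \setminus \Cut(z)$ and let $\eta:[0,l] \lra M$ be the unique maximal timelike unit-speed geodesic from $z$ to $x$, where $l = u(x)$. First, I would construct the radial Lagrange tensor field $\sJ$ along $\eta$ characterized by $\sJ(0)=0$ and $\sJ'(0)=\sI_n$; this is the Jacobi tensor field obtained from the differential of $\exp_z$ restricted to $N_\eta(0)$, and is nonsingular on $(0,l]$ precisely because there is no focal/cut point on $\eta((0,l])$. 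Associated to $\sJ$ are $\sB = \sJ'\sJ^{-1}$, the expansion $\theta=\trace\sB$, and the weighted quantities $\sJ_\psi$, $\sB_\ez$, $\theta_\ez$ of Subsection~\ref{ssc:LF-BM}.

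The key identification is the Lorentzian analogue of formula \eqref{eq:Lapu}: along $\eta$ the Laplacian of $-u$ equals the unweighted expansion scalar of the radial Jacobi tensor, and hence
\[ \Lap_\psi(-u)\big(\eta(t)\big) = \theta(t) - \psi'_\eta(t) = \e^{\frac{2(\ez-1)}{n}\psi_\eta(t)}\,\theta_\ez(t). \]
To justify this, I would imitate the Finsler computation from the proof of Theorem~\ref{th:F-Lcomp}: use synchronous (Gauss-lemma-type) coordinates along $\eta$ with $\Grad(-u) = \partial/\partial x^0$, exploit the geodesic equation to kill the Cartan-tensor term appearing in $\Gamma^{\alpha}_{\alpha 0}(\Grad(-u))$, and recognize the remaining trace as $(d/dt)\log\sqrt{|\det g_{ij}(\dot\eta)|}$, which coincides with $\theta$. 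The Gauss-lemma property in the Lorentz--Finsler setting used here is standard for a distance function whose integral curves of the gradient are unit-speed geodesics, and symmetry of $\Grad^2(-u)$ is exactly Lemma~\ref{lm:Hess}.

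Next, set $h(t) := (\det \sJ_\psi(t))^c$ and $h_1(\tau) := h(\varphi_\eta^{-1}(\tau))$. From $\log h_1 = c\log\det\sJ_\psi \circ \varphi_\eta^{-1}$ I get $h_1'/h_1 = c\,\theta_\ez\circ\varphi_\eta^{-1}$, and from Theorem~\ref{th:twri} combined with the curvature hypothesis I recover the weighted Bishop inequality \eqref{eq:LF-Bish}, namely $h_1''(\tau) \le -cK\, h_1(\tau)$ for $\tau \in (0,\varphi_\eta(l))$. Exactly as in the proof of Theorem~\ref{th:F-BM}, the initial condition $\sJ(0)=0$, $\sJ'(0)=\sI_n$ forces $h_1(\tau) \sim \tau^{cn}$ with $0<cn\le 1$, so the concavity-trick argument there yields $\lim_{\tau \to 0}\tau h_1'(\tau) \le 0$. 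Sturm comparison with $\bs_{cK}$ then gives that $h_1/\bs_{cK}$ is non-increasing on $(0,\varphi_\eta(l))$, whence $h_1'/h_1 \le \bs'_{cK}/\bs_{cK}$ at $\varphi_\eta(t)$.

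Combining these pieces,
\[ \Lap_\psi(-u)(x) = \e^{\frac{2(\ez-1)}{n}\psi_\eta(l)}\,\theta_\ez(l)
 \le \frac{1}{c}\,\varphi'_\eta(l)\,\frac{\bs'_{cK}(\varphi_\eta(l))}{\bs_{cK}(\varphi_\eta(l))}. \]
Finally, I would use the hypothesis \eqref{eq:LF-wab}, which gives $1/b \le \varphi'_\eta \le 1/a$ and $l/b \le \varphi_\eta(l) \le l/a$, together with the monotonicity of $\bs'_{cK}/\bs_{cK}$, to deduce the desired bound in terms of $u(x)/b$; the choice $\rho=a$ versus $\rho=b$ reflects the two cases according to the sign of $\bs'_{cK}(u(x)/b)$, so that one multiplies by $1/a$ in the positive regime and $1/b$ in the negative regime. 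The main obstacle is the coordinate computation identifying $\Lap(-u)$ with the expansion scalar of the radial Jacobi tensor in the Lorentz--Finsler setting, since it must handle both the Cartan-tensor term of the Chern connection and the geometry of the timelike Gauss lemma; once this identity is in hand, everything else reduces to the algebraic Riccati/Bishop analysis already developed in Section~\ref{ssc:LF-BM} and in the Riemannian proof of Theorem~\ref{th:F-Lcomp}.
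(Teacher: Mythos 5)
Your proposal is correct and follows the same overall strategy as the paper: reduce to a radial Lagrange tensor field $\sJ$ along the unique maximal unit-speed geodesic $\eta$ from $z$ to $x$, establish the identity $\Lap_{\psi}(-u)(\eta(t)) = \e^{\frac{2(\ez-1)}{n}\psi_{\eta}(t)}\theta_{\ez}(t)$, feed the Raychaudhuri inequality (Theorem~\ref{th:twri}) into the Bishop inequality \eqref{eq:LF-Bish}, and conclude via Sturm comparison with $\bs_{cK}$ together with $b^{-1} \le \varphi'_{\eta} \le a^{-1}$. The structure of the final estimate, the initial-condition argument yielding $\lim_{\tau\to 0}\tau h_1'(\tau)\le 0$, and the case split on the sign of $\bs'_{cK}$ are all exactly as in the paper.

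The one place where you diverge, modestly, is in how you establish the intermediate identity linking the d'Alembertian to the expansion scalar. The paper proves the stronger endomorphism-level statement $\Grad^2(-u)|_{N_{\eta}(t)} = \sB(t)$ by constructing, for each $w \in N_{\eta}(t_0)$, the $g_{\dot{\eta}}$-parallel field $P$ with $\sJ(t_0)(P(t_0)) = w$ and comparing $\sJ'\sJ^{-1}(w)$ with $D^{\Grad(-u)}_w(\Grad(-u))$ coefficient-by-coefficient in polar coordinates; the scalar identity $\Lap(-u)=\theta$ then follows by taking the trace (the radial direction contributes nothing since $D^{\dot\eta}_{\dot\eta}\dot\eta=0$). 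You instead propose to redo the trace-only computation of Theorem~\ref{th:F-Lcomp}, i.e., show directly that $\Lap(-u)(\eta(t))$ equals $\tfrac{\td}{\td t}\log\sqrt{\det[g_{ij}(\dot\eta)]}$ after the Cartan-type terms in $\Gamma^{\alpha}_{\alpha 0}(\Grad(-u))$ drop out (via $N^{\mu}_0(\dot\eta)=0$ from the geodesic equation and a symmetry cancellation), and then identify the log-determinant derivative with $\theta$. Both routes are correct. Yours proves strictly less — the trace, not the full Hessian restriction — but that is all the theorem needs; the paper's route has the side benefit of recording the finer identity $\Grad^2(-u)|_{N_\eta}=\sB$ (a well-known fact it wished to make self-contained), which is also useful elsewhere, e.g.\ in the splitting-theorem literature. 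No step in your argument would fail; you would just need to carry out the Cartan-term cancellation explicitly in the Lorentz--Finsler Chern coefficients $\Gamma^{\alpha}_{\beta\delta}$ from Subsection~\ref{ssc:LFcurv}, which works by the same mechanism as in Theorem~\ref{th:F-Lcomp}.
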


\begin{proof}
By the global hyperbolicity and $x \in I^+(z) \setminus \Cut(z)$,
there exists a unique maximal timelike geodesic $\eta(t)=\exp_z(tv)$ from $z$ to $x$
with $F(v)=1$.
Let $\sJ$ be the Lagrange tensor field along $\eta$ as in the proof of Theorem~\ref{th:LF-BM}.
Then the key ingredient of the proof is
\begin{equation}\label{eq:G^2=B}
\Grad^2(-u)|_{N_{\eta}(t)} =\sB(t)
\end{equation}
(which is a standard fact but we give a proof for completeness; see also \cite[Lemma~3.2]{OSbw}).
To this end, similarly to the proof of Theorem~\ref{th:F-Lcomp},
let $(x^{\alpha})_{\alpha=0}^n$ be polar coordinates around $\eta((0,d(z,x)))$
such that $x^0=u$ and $g_{\dot{\eta}}(\dot{\eta},\del/\del x^i)=0$ for all $i=1,2,\ldots,n$.
Note that $\Grad(-u)(\eta(t))=\dot{\eta}(t)=(\del/\del x^0)|_{\eta(t)}$.

Given $w \in N_{\eta}(t_0)$ with $t_0 \in (0,d(z,x))$,
let $P$ be the $g_{\dot{\eta}}$-parallel vector field along $\eta$ such that
$P(t_0)=\sJ(t_0)^{-1}(w)$.
Then, by the construction in the proof of Theorem~\ref{th:LF-BM}, we have
$w =\sJ(t_0)(P(t_0)) =\td(\exp_z)_{t_0 v}(t_0 P(0))$.
Put
\[ Y(t) :=\sJ(t) \big( P(t) \big) =\td(\exp_z)_{tv} \big( tP(0) \big)
 =\frac{\del}{\del \delta}\Big[ \exp_z \!\big( tv+\delta tP(0) \big) \Big] \Big|_{\delta=0}. \]
Let $Y(t)=\sum_{i=1}^n Y^i(t) (\del/\del x^i)|_{\eta(t)}$ and note that $(Y^i)' \equiv 0$
since we are considering the polar coordinates
(by exchanging the order of the derivatives in $\delta$ and $t$).
Hence, on the one hand, we have
\[ \sB(t_0)(w) =\sJ' \sJ^{-1}(w) =Y'(t_0)
 =\sum_{i,j=1}^n \Gamma^i_{j0} \big( \dot{\eta}(t_0) \big) Y^j(t_0) \frac{\del}{\del x^i}\Big|_x. \]
On the other hand,
\[ \Grad^2(-u)(w) =D^{\Grad(-u)}_w \big( \Grad(-u) \big)
 =\sum_{i,j=1}^n \Gamma^i_{j0}\big( \dot{\eta}(t_0) \big) w^j \frac{\del}{\del x^i}\Big|_x. \]
Since $Y(t_0)=w$, we obtain \eqref{eq:G^2=B}.

It follows from \eqref{eq:G^2=B} that
\begin{align*}
\Lap_{\psi}(-u)\big( \eta(t) \big)
&= \trace\big( \Grad^2(-u) \big) \big( \eta(t) \big) -\psi'_{\eta}(t)
 =\e^{\frac{2(\ez -1)}{n}\psi_{\eta}(t)} \trace\big( \sB_{\ez}(t) \big) \\
&= \e^{\frac{2(\ez -1)}{n}\psi_{\eta}(t)} \theta_{\ez}(t)
 =\e^{\frac{2(\ez -1)}{n}\psi_{\eta}(t)} \frac{h'_1(\varphi_{\eta}(t))}{ch_1(\varphi_{\eta}(t))},
\end{align*}
where the last equality was seen in the proof of Theorem~\ref{th:LF-BM}.
Combining this with $h'_1 \bs_{cK} -h_1 \bs'_{cK} \le 0$ shown in the same way as in
the proof of Theorem~\ref{th:F-BM} thanks to \eqref{eq:LF-Bish}, we have
\[ \Lap_{\psi}(-u)\big( \eta(t) \big)
 \le \e^{\frac{2(\ez -1)}{n}\psi_{\eta}(t)} \frac{\bs'_{cK}(\varphi_{\eta}(t))}{c\bs_{cK}(\varphi_{\eta}(t))}
 \le \frac{1}{c\rho} \frac{\bs'_{cK}(t/b)}{\bs_{cK}(t/b)} \]
by the fact that $\bs'_{cK}/\bs_{cK}$ is non-increasing
and by $b^{-1} \le \varphi'_{\eta} \le a^{-1}$ from \eqref{eq:LF-wab}.
This completes the proof.
$\qedd$
\end{proof}

Similarly to Remark~\ref{rm:ab}, the intermediate estimate
\[ \Lap_{\psi}(-u)\big( \eta(t) \big)
 \le \e^{\frac{2(\ez -1)}{n}\psi_{\eta}(t)} \frac{\bs'_{cK}(\varphi_{\eta}(t))}{c\bs_{cK}(\varphi_{\eta}(t))} \]
without the bound \eqref{eq:LF-wab} on $\psi$ is also meaningful.

\subsection{Bishop--Gromov comparison theorem}\label{ssc:LF-BG}%%%%%%%%%
%%%%%%%%%%%%%%%%%

Volume comparison theorems in the Lorentzian setting are not as simple as in the positive-definite case.
This is because, given $x \in M$, the ``future ball'' $\{ y \in I^+(x) \,|\, d(x,y)<r \}$
is possibly noncompact and can have infinite volume.
For this reason, we need to restrict the directions to make the set of our interest be compact.
We shall make use of the following notion introduced in \cite{ES}.
We refer to \cite{EJK,Lu} for other volume comparison theorems in the same spirit,
the latter is concerned with weighted Finsler spacetimes.

\begin{definition}[SCLV]\label{df:SCLV}
For $x \in M$, a set $U \subset M$ is called
a \emph{standard for comparison of Lorentzian volumes} (\emph{SCLV} in short) at $x$
if there is $\wz U_x \subset T_xM$ satisfying the following conditions:
\begin{enumerate}[(1)]
\item $\wz U_x$ is an open set in $\Omega_x$; \label{sclv1}
\item $\wz U_x$ is star-shaped from the origin, i.e., we have $tv \in \wz U_x$
for all $v \in \wz U_x$ and $t \in (0,1)$; \label{sclv2}
\item $\wz U_x$ is contained in a compact set in $T_xM$; \label{sclv3}
\item The exponential map at $x$ is defined on $\wz U_x$,
the restriction of $\exp_x$ to $\wz U_x$ is a diffeomorphism
onto its image, and we have $U=\exp_x(\wz U_x)$. \label{sclv4}
\end{enumerate}
\end{definition}

Note that, for a small convex neighborhood $W$ of $0 \in T_xM$,
$\exp_x(W \cap \Omega_x)$ is an SLCV at $x$.
We need some more notation.
For $x,U,\wz U_x$ as above and $0<r \le 1$, we define
\[ \wz U_x(r):=\{rv \,|\, v \in \wz U_x\} \subset \wz U_x, \qquad
 U_x(r):=\exp_x\! \big( \wz U_x(r) \big) \subset U. \]
Since $U$ is not like a ``ball'' in general, we also define
\begin{align*}
\mathcal{U}_x &:=\{ v \in \Omega_x \,|\, F(v)=1,\, tv \in \wz U_x \text{ for some }t>0 \}, \\
T_{U,x}(v) &:= \sup\{ t>0 \,|\, tv \in \wz U_x \}, \quad v \in \mathcal{U}_x
\end{align*}
($T_{U,x}$ is called the \emph{cut function} in \cite{ES}).
Assuming that $T_{U,x}$ is constant on $\mathcal{U}_x$ amounts to considering (a part of) a ball.
Let $\fm$ be a positive $C^{\infty}$-measure on $M$
and $\psi_{\fm}$ be the weight function associated with $\fm$ in a similar way to \eqref{eq:psi_m},
precisely,
\[ \td\fm =\e^{-\psi_{\fm}(\dot{\eta}(t))} \sqrt{-\det\big[ g_{\alpha\beta}\big(\dot{\eta}(t) \big) \big]}
 \,\td x^0 \td x^1 \cdots \td x^n \]
along timelike geodesics $\eta$.

\begin{theorem}[Bishop--Gromov comparison theorem]\label{th:LF-BG}
Let $(M,L,\fm)$ be globally hyperbolic of dimension $n+1\ge 2$,
$N \in (-\infty,0] \cup [n,+\infty]$, $\ez \in \R$ in the $\ez$-range \eqref{eq:LF-eran},
$K \in \R$ and $b \ge a>0$.
Suppose that
\[ \Ric_N(v) \ge K F^2(v) \e^{\frac{4(\ez-1)}{n}\psi_{\fm}(v)} \]
holds for all $v \in \Omega$ and
\[ a \le \e^{-\frac{2(\ez-1)}{n}\psi_{\fm}} \le b. \]
Then, for any SCLV $U \subset M$ at $x \in M$ such that either
\begin{enumerate}[{\rm (A)}]
\item $T_{U,x} \equiv T$ on $\mathcal{U}_x$, or
\item $K=0$ and $T:=\inf_{v \in \mathcal{U}_x} T_{U,x}>0$,
\end{enumerate}
we have
%\begin{equation}\label{eq:LF-BG}
\[ \frac{\fm(U_x(R))}{\fm(U_x(r))} \le \frac{b}{a}
 \frac{\int_0^{\min\{RT/a,\,\pi/\sqrt{cK}\}} \bs_{cK}(\tau)^{1/c} \,\td\tau}{\int_0^{rT/b} \bs_{cK}(\tau)^{1/c} \,\td\tau} \]
%\end{equation}
for all $0<r<R \le 1$, where we set $\pi/\sqrt{cK} :=\infty$ for $K \le 0$.
\end{theorem}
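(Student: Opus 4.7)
The plan is to follow the template of the Riemannian proof of Theorem~\ref{th:F-BG}, adapted to the SCLV volume and built on top of the weighted Bishop inequality \eqref{eq:LF-Bish} already derived inside the proof of Theorem~\ref{th:LF-BM}. For each $v \in \mathcal{U}_x$, I would consider the timelike unit-speed geodesic $\eta(t):=\exp_x(tv)$ and the canonical Lagrange tensor field $\sJ$ on $N_\eta$ with $\sJ(0)=0$, $\sJ'(0)=\sI_n$, set $h(t):=(\det \sJ_\psi(t))^c$ and $h_1:=h\circ \varphi_\eta^{-1}$. Then \eqref{eq:LF-Bish} together with the curvature hypothesis yields $h_1''(\tau)\le -cK h_1(\tau)$, and the Sturm-type argument used in the proofs of Theorems~\ref{th:F-BM} and~\ref{th:F-Lcomp} shows that $(h_1/\bs_{cK})^{1/c}$ is non-increasing on its domain.

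Next I would express $\fm(U_x(r))$ in SCLV-polar form. Since $\exp_x|_{\wz U_x}$ is a diffeomorphism by condition (\ref{sclv4}), and $h(t)^{1/c}=\e^{-\psi_\fm(\dot\eta(t))}\det\sJ(t)$ is precisely the weighted Jacobian of $\exp_x$ along $tv$, one gets
\[
\fm\big(U_x(r)\big)=\int_{\mathcal{U}_x}\int_0^{rT_{U,x}(v)} h_v(t)^{1/c}\,\td t\,\Xi(\td v)
\]
for the measure $\Xi$ on $\mathcal{U}_x$ induced from $g_v$, in the same spirit as in the proof of Theorem~\ref{th:F-BG}. Changing variables $\tau=\varphi_{\eta_v}(t)$ and using $b^{-1}\le \varphi_\eta'\le a^{-1}$, one obtains direction-by-direction
\[
\frac{\int_0^{RT_{U,x}(v)} h_v(t)^{1/c}\,\td t}{\int_0^{rT_{U,x}(v)} h_v(t)^{1/c}\,\td t}
\le \frac{b}{a}\cdot
\frac{\int_0^{S_v} h_{1,v}(\tau)^{1/c}\,\td\tau}{\int_0^{s_v} h_{1,v}(\tau)^{1/c}\,\td\tau},
\]
with $s_v\ge rT_{U,x}(v)/b$ and $S_v\le RT_{U,x}(v)/a$. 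Applying Gromov's lemma (\cite[Lemma~III.4.1]{Ch}) to the monotonicity of $(h_{1,v}/\bs_{cK})^{1/c}$ then controls this right-hand ratio by the corresponding ratio of $\bs_{cK}^{1/c}$ integrals.

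In case (A), where $T_{U,x}\equiv T$, the resulting bound is direction-independent and integrating over $\mathcal{U}_x$ immediately yields the claimed estimate; when $K>0$ and $RT/a>\pi/\sqrt{cK}$, the upper limit is capped at $\pi/\sqrt{cK}$ because by Theorem~\ref{th:LF-BM} the first conjugate point along $\eta$ occurs no later than that value of $\tau$, beyond which $h_{1,v}$ vanishes. In case (B), the scale-invariance afforded by $K=0$ becomes essential: with $\bs_0(\tau)^{1/c}=\tau^{1/c}$, Gromov's lemma reduces the bound to $(S_v/s_v)^{1/c+1}\le (Rb/(ra))^{1/c+1}$, which is independent of $T_{U,x}(v)$, so integration over $\mathcal{U}_x$ produces a right-hand side matching $\int_0^{RT/a}\tau^{1/c}\,\td\tau/\int_0^{rT/b}\tau^{1/c}\,\td\tau$. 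The main obstacle is precisely case (B): without $T_{U,x}$ being constant, the direction-by-direction comparison would in general depend on $v$, and only the scale-invariance of the $K=0$ model function $\bs_0$ allows the $T_{U,x}(v)$-dependence to cancel. A secondary but delicate point is the bookkeeping in the polar-volume formula: because $\sJ$ is an endomorphism of the $n$-dimensional quotient $N_\eta$ rather than a matrix of Jacobi vector fields, the weighted Jacobian is $\e^{-\psi_\fm(\dot\eta)}\det\sJ$ and not a square root thereof, so the exact powers must be tracked carefully to recover $h(t)^{1/c}$ as the integrand and to collect the factor $b/a$ from the bounds on $\varphi_\eta'$.
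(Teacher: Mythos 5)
Your proposal is correct and follows essentially the same route as the paper's proof: derive the Bishop inequality \eqref{eq:LF-Bish}, deduce that $h_1/\bs_{cK}$ is non-increasing, apply Gromov's lemma for the direction-by-direction ratio, change variables via $\varphi_\eta$ using $b^{-1}\le\varphi_\eta'\le a^{-1}$, then integrate over $\mathcal{U}_x$ against $\Xi$; the roles of hypotheses (A) and (B) are correctly identified, with (B) exploiting the scale-invariance of $\bs_0(\tau)=\tau$ to remove the $T_{U,x}(v)$-dependence, and the bookkeeping observation that the Lorentzian Jacobian is $\e^{-\psi_\fm(\dot\eta)}\det\sJ$ (not a square root) is accurate.
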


\begin{proof}
For each $v \in \mathcal{U}_x$ and the geodesic $\eta(t):=\exp_x(tv)$,
$h_1/\bs_{cK}$ is non-increasing as we mentioned in the proof of Theorem~\ref{th:LF-Lcomp}.
Hence we have
\[ \frac{\int_0^S h_1(\tau)^{1/c} \,\td \tau}{\int_0^S \bs_{cK}(\tau)^{1/c} \,\td \tau}
 \le \frac{\int_0^s h_1(\tau)^{1/c} \,\td \tau}{\int_0^s \bs_{cK}(\tau)^{1/c} \,\td \tau} \]
for $0<s<S$, similarly to the proof of Theorem~\ref{th:F-BG}.
Moreover, since $b^{-1} \le \varphi'_{\eta} \le a^{-1}$,
\[  \frac{\int_0^S h(t)^{1/c} \,\td t}{\int_0^s h(t)^{1/c} \,\td t}
 \le \frac{b}{a} \frac{\int_0^{\varphi_{\eta}(S)}
 h_1(\tau)^{1/c} \,\td \tau}{\int_0^{\varphi_{\eta}(s)} h_1(\tau)^{1/c} \,\td \tau}
 \le \frac{b}{a} \frac{\int_0^{\varphi_{\eta}(S)}
 \bs_{cK}(\tau)^{1/c} \,\td \tau}{\int_0^{\varphi_{\eta}(s)} \bs_{cK}(\tau)^{1/c} \,\td \tau}. \]
Now, letting $S=RT_{U,x}(v)$, $s=rT_{U,x}(v)$,
and noticing $\varphi_{\eta}(RT_{U,x}(v)) \le \pi/\sqrt{cK}$ if $K>0$
by the proof of Theorem~\ref{th:LF-BM} (or Theorem~\ref{th:F-BM}),
we deduce from the hypothesis (A) or (B) that (recall $\bs_0(\tau)=\tau$ from \eqref{eq:bs})
\begin{align*}
\frac{\int_0^{\varphi_{\eta}(RT_{U,x}(v))}
 \bs_{cK}(\tau)^{1/c} \,\td \tau}{\int_0^{\varphi_{\eta}(rT_{U,x}(v))} \bs_{cK}(\tau)^{1/c} \,\td \tau}
&\le \frac{\int_0^{\min\{ RT_{U,x}(v)/a,\,\pi/\sqrt{cK} \}} \bs_{cK}(\tau)^{1/c}
 \,\td \tau}{\int_0^{rT_{U,x}(v)/b} \bs_{cK}(\tau)^{1/c} \,\td \tau} \\
&\le \frac{\int_0^{\min\{ RT/a,\,\pi/\sqrt{cK} \}}
 \bs_{cK}(\tau)^{1/c} \,\td \tau}{\int_0^{rT/b} \bs_{cK}(\tau)^{1/c} \,\td \tau}.
\end{align*}
We integrate this inequality in $v \in \mathcal{U}_x$ with respect to the measure $\Xi$
induced from $g_v$ to see
\begin{align*}
\fm\big( U_x(R) \big)
&= \int_{\mathcal{U}_x} \int_0^{RT_{U,x}(v)} h(t)^{1/c} \,\td t \,\Xi(\td v) \\
&\le \frac{b}{a}
 \frac{\int_0^{\min\{ RT/a,\,\pi/\sqrt{cK} \}} \bs_{cK}(\tau)^{1/c} \,\td \tau}{\int_0^{rT/b} \bs_{cK}(\tau)^{1/c} \,\td \tau}
 \int_{\mathcal{U}_x} \int_0^{rT_{U,x}(v)} h(t)^{1/c} \,\td t \,\Xi(\td v) \\
&=  \frac{b}{a}
 \frac{\int_0^{\min\{ RT/a,\,\pi/\sqrt{cK} \}} \bs_{cK}(\tau)^{1/c} \,\td \tau}{\int_0^{rT/b} \bs_{cK}(\tau)^{1/c} \,\td \tau}
 \fm\big( U_x(r) \big)
\end{align*}
(we remark that $rT/b \le \varphi_{\eta}(rT) \le \pi/\sqrt{cK}$ if $K>0$).
This completes the proof.
$\qedd$
\end{proof}

\noindent
\textbf{Acknowledgements.}
EM thanks Department of Mathematics of Osaka University for kind hospitality.
SO was supported in part by JSPS Grant-in-Aid for Scientific Research (KAKENHI) 19H01786.

{\small%%%

}

\end{document}